\documentclass[final,12pt]{arxiv2025}

\usepackage{graphicx}
\usepackage{epstopdf}

\usepackage{enumitem,amsfonts,amssymb,mathtools,bm,pifont,bbm,listings,threeparttable}
\usepackage{ulem,hyperref,comment,amsmath}

\usepackage{natbib}
\usepackage{multirow}

\usepackage{algorithm}

\newtheorem{assumption}[theorem]{Assumption}

\def\trans{^\mathsf{T}}\def\Rn{\mathbb{R}}  \def\zero{\mathbf{0}}
\def\trans {^\mathsf{T}}
\def\fro {\mathsf{F}}
\def\hG {\hat{G}}

\definecolor{mydarkgreen}{RGB}{39,130,67}
\definecolor{mydarkred}{RGB}{192,25,25}

\usepackage{relsize} 

\DeclareMathOperator{\tr}{tr}

\newcommand{\beq}{\begin{eqnarray}}
\newcommand{\eeq}{\end{eqnarray}}
\newcommand{\beqq}{\begin{equation}}
\newcommand{\eeqq}{\end{equation}}
\newcommand{\bel}{\begin{align}}
\newcommand{\eel}{\end{align}}
\newcommand{\la}{\langle}
\newcommand{\ra}{\rangle}
\newcommand{\noi}{\noindent}
\newcommand{\nn}{\nonumber}
\def\noi{\noindent}
\def\nn{\nonumber}
\def\la{\langle}
\def\ra{\rangle}

\newcommand{\step}[1]{\text{\ding{\numexpr#1+171\relax}}}

\def\epsilons{\boldsymbol{\epsilon}}

\def\zetas{\boldsymbol{\zeta}}

\def\ts{\textstyle}

\def\a{\mathbf{a}}\def\b{\mathbf{b}}\def\q{\mathbf{q}}\def\A{\mathbf{A}}\def\B{\mathbf{B}}\def\D{\mathbf{D}}\def\G{\mathbf{G}}\def\I{\mathbf{I}}\def\J{\mathbf{J}}\def\M{\mathbf{M}}\def\R{\mathbf{R}}\def\S{\mathbf{S}}\def\U{\mathbf{U}}\def\V{\mathbf{V}}\def\W{\mathbf{W}}\def\X{\mathbf{X}}\def\Y{\mathbf{Y}}\def\Z{\mathbf{Z}}

\def\FF{\mathcal{F}}\def\OO{\mathcal{O}}\def\ZZ{\mathcal{Z}}

\def\AAA{\mathbb{A}}\def\EEE{\mathbb{E}}\def\GGG{\mathbb{G}}\def\III{\mathbb{I}}\def\JJJ{\mathbb{J}}\def\KKK{\mathbb{K}}\def\MMM{\mathbb{M}}\def\NNN{\mathbb{N}}\def\RRR{\mathbb{R}}\def\SSS{\mathbb{S}}\def\UUU{\mathbb{U}}\def\VVV{\mathbb{V}}\def\WWW{\mathbb{W}}

\def\NO{\ding{56}}
\def\YES{\ding{52}}

\renewcommand{\X}{\mathbf X}
\renewcommand{\G}{\mathbf G}

\renewcommand{\M}{\mathbf M}
\renewcommand{\V}{\mathbf V}

\newcommand{\Orth}{\operatorname{Orth}}
\renewcommand{\fro}{\mathrm F}
\newcommand{\eps}{\varepsilon}

\usepackage{graphicx}
\usepackage{amssymb} 

\renewcommand{\Orth}{\operatorname{Orth}}

\newcommand{\xxxx}[2]{\if\relax\detokenize\expandafter{\romannumeral-`\q#1}\relax#2\else \overset{\step{#1}}{#2}\fi}

\title[OptMuon]{OptMuon: Closed-Loop Orthogonalized Momentum Methods for Stochastic Optimization with Zero-Noise Optimality}

\usepackage{times}
\coltauthor{
 \Name{Ganzhao Yuan} \Email{yuanganzhao@foxmail.com}\\
 \addr Faculty of Computer Science and Artificial Intelligence\\
 Shenzhen University of Advanced Technology (SUAT), China
}

\begin{document}

\maketitle

\begin{abstract}
Orthogonalized momentum updates, as used in Muon-style optimizers, have recently shown strong empirical stability in large-scale deep learning. However, most current orthogonalized methods are still paired with fixed, externally scheduled, or otherwise open-loop magnitude rules, so their scale is not directly calibrated from the realized optimization trajectory. Motivated by the closed-loop perspective behind Lipschitz-free and noise-adaptive methods, we propose OptMuon, a family of adaptive momentum orthogonalization methods for stochastic nonconvex optimization. OptMuon combines Muon-style polar-factor directions with a trajectory-dependent AdaGrad-Norm-type coefficient schedule, so that the update magnitude is determined by the observed gradient and momentum history rather than by a prescribed Lipschitz-dependent rule. The schedule does not use the smoothness constant, the variance level, or the bounded-gradient constant in parameter selection, and its running-maximum correction prevents isolated gradient spikes from causing excessive coefficient collapse. Under lower-boundedness, unbiased stochastic gradients with bounded variance, smoothness, and an almost-sure bounded stochastic-gradient condition, we prove two complementary expected-stationarity guarantees. OptMuon-A achieves the noise-adaptive rate
\(\tilde{\mathcal O}(T^{-1/2}+\sigma^{1/2}T^{-1/4})\)
under average smoothness, while OptMuon-I achieves
\(\tilde{\mathcal O}(T^{-1/2}+\sigma^{1/3}T^{-1/3})\)
under individual smoothness. In the zero-noise regime, both bounds automatically reduce to a nearly optimal deterministic first-order rate \(\tilde{\mathcal O}(T^{-1/2})\) without manual hyperparameter retuning. These results show that closed-loop scalar adaptation can be combined with Muon-style momentum orthogonalization while retaining noise adaptivity and zero-noise optimality up to logarithmic factors.
\end{abstract}

\begin{keywords}
Stochastic Nonconvex Optimization; Adaptive Methods; Orthogonalized Momentum; Muon; Noise Adaptivity; Lipschitz-free Algorithms; Convergence Analysis.
\end{keywords}

\section{Introduction}

We study stochastic first-order methods for the matrix-valued nonconvex problem
\begin{equation}\label{eq:main}
\min_{\X\in\mathbb{R}^{m\times n}} f(\X):=\mathbb{E}_{\xi\sim\mathcal{D}}[f(\X;\xi)],
\end{equation}
where $n\leq m$, $\xi$ is sampled independently from an unknown distribution $\mathcal D$. The performance criterion is the standard expected stationarity measure: after $T$ iterations, return a random iterate $\X_{\rm out}$ with small $\EEE\|\nabla f(\X_{\rm out})\|_{\mathrm F}$. The matrix notation is chosen to match Muon-style layerwise updates; vector-valued parameters can be handled by the usual reshaping or blockwise formulation.  The analysis assumes lower boundedness, unbiased stochastic gradients with bounded variance, smoothness, and an explicit almost-sure bounded stochastic-gradient condition.

Stochastic gradient methods remain the computational backbone of modern machine learning \cite{robbins1951stochastic,polyak1964some}. Classical stochastic approximation, SGD, and momentum methods are simple and scalable, whereas adaptive methods such as AdaGrad \citep{duchi2011adaptive}, Adam  \citep{kingma2015adam}, and AdaGrad-Norm  \citep{ward2020adagrad} improve robustness by using observed gradient statistics to calibrate update magnitudes. A persistent difficulty is that the search direction, the step magnitude, and the noise level are often controlled by the same raw stochastic-gradient vector. This coupling makes hyperparameter choices sensitive to the smoothness scale and to changes in variance across training phases.

A complementary line of work modifies the geometry of the update direction. Muon-style methods transform a matrix-valued momentum into an approximately orthogonal direction, typically through a polar-factor computation or a small number of Newton--Schulz iterations \citep{kim2026convergencens,chang2025convergence,huang2025limuon}. This operation separates the orientation of the update from anisotropic singular-value scaling. Existing Muon analyses and implementation-oriented variants often either use a fixed or open-loop global magnitude rule, or focus on the accuracy and efficiency of the orthogonalization map itself. Recent adaptive Muon variants have begun to investigate norm-based, coordinate-wise, trust-region, and distance-aware scaling mechanisms \citep{zhang2025adagradmeetsmuon,si2025adamuon,zhang2026adamimprovesmuon,demidovich2026distance}, which further motivates a systematic study of closed-loop scalar adaptation for orthogonalized momentum.

In a separate line of theory, recursive variance-reduced methods \citep{nguyen2017sarah,nguyen2017nonconvexsarah,fang2018spider,cutkosky2019momentum} and AdaGrad-Norm-type methods \cite{ward2020adagrad,levy2021storm,liu2022meta} adapt the estimator or the update magnitude from the observed trajectory rather than from a prescribed smoothness-dependent schedule. Their most relevant feature for the present paper is noise adaptivity: the stochastic term in the convergence bound improves automatically as the variance parameter $\sigma$ decreases, and the deterministic first-order rate is recovered when $\sigma=0$. The central question addressed here is whether this closed-loop scalar adaptation can be combined with Muon-style orthogonalized momentum without losing the geometric stability of the polar-factor direction.

This paper proposes {OptMuon}, a closed-loop orthogonalized momentum framework for stochastic nonconvex optimization.  The direction is the polar factor $\Orth(\M_t)$ of a momentum matrix $\M_t$, while the magnitude is governed by a scalar coefficient computed from the observed gradient and momentum history.  The coefficient has an AdaGrad-Norm flavor: accumulated stochastic-gradient energy suppresses steps in noisy regimes, whereas the running maximum in the numerator prevents the coefficient from collapsing after isolated gradient spikes.  The method is Lipschitz-free in the algorithmic sense: its parameters do not require the smoothness constant $L$, the variance $\sigma^2$, or the bounded-gradient constant $\hG$, even though these quantities appear in the proof constants.

We analyze two variants. \textsc{OptMuon-A} uses one stochastic gradient per iteration and is analyzed under average smoothness. \textsc{OptMuon-I} uses a STORM-type recursive correction evaluated on the same sample at two consecutive points and is analyzed under individual smoothness.  Under the assumptions in Section~\ref{sect:prel}, the guarantees in Section~\ref{sect:rate} imply
\begin{align}
\textsc{OptMuon-A}:&\quad
\EEE\|\nabla f(\X_{\rm out})\|_{\fro}
\le \tilde{\OO}\!\left(T^{-1/2}+\sigma^{1/2}T^{-1/4}\right),
\qquad \text{Average smoothness},\nonumber\\
\textsc{OptMuon-I}:&\quad
\EEE\|\nabla f(\X_{\rm out})\|_{\fro}
\le \tilde{\OO}\!\left(T^{-1/2}+\sigma^{1/3}T^{-1/3}\right),
\qquad \text{Individual smoothness}.\nonumber
\end{align}

Both bounds reduce automatically to $\tilde{\OO}(T^{-1/2})$ in the zero-noise regime. Thus, the paper extends zero-noise adaptivity from scalar adaptive stepsize rules to polar-factor momentum updates.

\paragraph{Contributions.} The paper makes the following contributions.

\begin{itemize}[leftmargin=1.5em,itemsep=0.35ex]

\item \textbf{Closed-loop Momentum Orthogonalization.} We introduce OptMuon, a stochastic first-order framework in which the update direction is the polar factor $\Orth(\M_t)$ and the update magnitude is determined by a scalar self-normalized schedule.  This design cleanly separates matrix geometry from magnitude calibration.

\item \textbf{Running-Maximum Corrected AdaGrad-Norm Schedule.} The proposed coefficient is driven by accumulated stochastic-gradient energy but corrected by a running maximum. This design preserves the self-normalized suppression effect of AdaGrad-Norm while preventing a single gradient spike from causing excessive coefficient collapse.

\item \textbf{Noise-Adaptive Convergence Rates.} Under lower boundedness, unbiasedness, bounded variance, smoothness, and almost-sure bounded stochastic gradients, OptMuon-A attains the average-smoothness rate $\tilde{\mathcal O}(T^{-1/2}+\sigma^{1/2}T^{-1/4})$, while OptMuon-I attains the individual-smoothness rate $\tilde{\mathcal O}(T^{-1/2}+\sigma^{1/3}T^{-1/3})$.

\item \textbf{Zero-Noise Recovery for Orthogonalized Updates.} When $\sigma=0$, both guarantees reduce without retuning to the nearly deterministic $\tilde{\OO}(T^{-1/2})$ stationarity rate under the stated assumptions.  This shows that closed-loop scalar adaptation can be coupled with Muon-style orthogonalization without sacrificing its zero-noise behavior.

\end{itemize}

\paragraph{Organization.}
Section~\ref{sect:related} discusses related work.  Section~\ref{sect:prel} introduces notation, the orthogonalization operator, and the assumptions.  Section~\ref{sect:proposed} presents the OptMuon algorithms.  Section~\ref{sect:rate} proves the convergence guarantees, and Section~\ref{sect:conc} concludes.

\section{Related Work}
\label{sect:related}

\paragraph{Stochastic Nonconvex Optimization.}
The standard complexity theory of stochastic first-order methods for smooth nonconvex objectives studies expected stationarity measures such as $\EEE\|\nabla f(\X_{\rm out})\|_{\fro}$ or their squared-norm variants \citep{Ghadimi2013,ghadimi2016mini}.  Under unbiased stochastic gradients with bounded variance, SGD-type methods typically require stepsizes tuned to the smoothness and noise level. Adaptive methods reduce this sensitivity by using gradient history to set coordinate-wise or norm-based magnitudes \citep{duchi2011adaptive,kingma2015adam,Reddi2018OnTC,ward2020adagrad}. OptMuon follows this adaptive viewpoint but applies scalar adaptation to an orthogonalized momentum direction rather than to the raw gradient.

\paragraph{Noise-Adaptive and Lipschitz-Free Methods.} Variance-reduced estimators such as SARAH and SPIDER established recursive-gradient templates for nonconvex stochastic optimization, and STORM-type methods use momentum-style recursion to obtain rates with stochastic terms of order $\sigma^{1/3}T^{-1/3}$ under individual smoothness \citep{nguyen2017sarah,nguyen2017nonconvexsarah,fang2018spider,cutkosky2019momentum}. Subsequent Lipschitz-free variants, including STORM$^+$ and META-STORM, further reduce dependence on problem parameters while preserving noise adaptivity \citep{levy2021storm,liu2022meta}. These methods provide closed-loop scalar rules, but they do not use Muon-style polar-factor directions.

\paragraph{Orthogonalized and Normalized Updates.} Normalized-gradient and orthogonalized-gradient methods modify the update direction so that the step length is not dictated directly by the raw gradient norm. For matrix-valued parameters, Muon-style optimizers use a polar-factor or Newton--Schulz approximation of a momentum matrix, which equalizes singular directions and changes the geometry of the search step \citep{kim2026convergencens,riabinin2025gluon}. Existing analyses of orthogonalized methods are largely based on open-loop or constant magnitude choices, whereas OptMuon couples polar-factor momentum with a trajectory-dependent magnitude schedule.

\paragraph{Adaptive Scaling for Muon-style Updates.}
Several recent works have begun to study adaptive scaling mechanisms for orthogonalized updates. AdaGO combines a norm-based AdaGrad-type stepsize with an orthogonalized update direction and establishes convergence guarantees in deterministic and stochastic nonconvex settings \citep{zhang2025adagradmeetsmuon}. AdaMuon combines element-wise adaptivity with orthogonal updates and is mainly motivated by large-scale neural-network training \citep{si2025adamuon}. NAMO and NAMO-D further integrate norm-based Adam-type noise adaptation with orthogonalized momentum and provide convergence guarantees together with GPT pretraining experiments \citep{zhang2026adamimprovesmuon}. Distance-Aware Muon \citep{demidovich2026distance} studies adaptive step scaling for normalized optimization from a trust-region and trajectory-distance perspective. OptMuon is complementary to these works: its scalar schedule is a lagged, running-maximum corrected AdaGrad-Norm coefficient whose non-monotone variation is controlled logarithmically, and the analysis emphasizes explicit noise-adaptive rate decompositions and zero-noise recovery for Muon-style polar-factor momentum.


\paragraph{Relation to Recent Muon Convergence Theory.} This work is complementary to recent convergence analyses of Muon and its variance-reduced variants. In particular, \citet{chang2025convergence} study horizon-free variance-reduced Muon-type methods and show that variance reduction can recover the optimal stochastic nonconvex rate under appropriate smoothness assumptions. Our focus is different: we do not aim to replace the variance-reduction theory in that work. Instead, we ask whether Muon-style polar-factor momentum can be combined with a closed-loop scalar coefficient schedule that is selected from the observed trajectory and automatically separates the deterministic and stochastic terms in the final bound. Thus, the main novelty here is the trajectory-dependent coefficient design and the resulting zero-noise adaptive rate split for orthogonalized momentum updates.

\paragraph{Comparison with Representative Methods.}
Table~\ref{tab:intro} summarizes the comparison most relevant to this paper.  The table focuses on mini-batch first-order methods for finding an $\eps$-stationary point in expectation. The rates reported for OptMuon match the guarantees proved in Section~\ref{sect:rate}; all logarithmic factors are suppressed in $\tilde{\mathcal O}(\cdot)$ notation.

\begin{table*}[t]
\renewcommand{\arraystretch}{1.18}
\centering
\caption{Comparison of representative mini-batch first-order stochastic methods for finding an
\(\epsilon\)-approximate solution \(\X\) satisfying
\(\EEE[\|\nabla f(\X)\|_{\fro}]\le \epsilon\).
The notation \(\tilde{\mathcal O}(\cdot)\) hides polylogarithmic factors.
We distinguish adaptive effective scaling from the closed-loop zero-noise recovery
property studied in this paper. All methods assume lower-boundedness, unbiased
gradients with bounded variance, and smoothness; the column ``Additional Assum.''
lists extra conditions used in the stated guarantees.}
\label{tab:intro}
\scalebox{0.73}{
\begin{tabular}{|p{5.2cm}|p{1.65cm}|p{4.15cm}|p{1.5cm}|p{1.6cm}|p{3.8cm}|}
\hline
Algorithm
& Smoothness Assum.
& Adaptive Scaling
& CLZO Recovery
& Additional Assum.
& Convergence Rate \\
\hline\hline

AdaGO \cite{zhang2025adagradmeetsmuon}
& Average
& \YES\ scalar AdaGrad-type
& \NO
& ---
& \(\mathcal O(T^{-1/4})\) \\
\hline

AdaMuon \cite{si2025adamuon}
& Average
& \YES\ element-wise
& \NO
& ---
& \(\mathcal O(T^{-1/4})\) \\
\hline

NAMO \cite{zhang2026adamimprovesmuon}
& Average
& \YES\ Adam-type scalar
& \NO\({}^{\dagger}\)
& ---
& \(\mathcal O(T^{-1/4})\)\({}^{\ddagger}\) \\
\hline

MuonNS \cite{kim2026convergencens}
& Average
& \NO\ open-loop
& \NO
& ---
& \(\mathcal O(T^{-1/4})\) \\
\hline
Muon-MVR1 \cite{chang2025convergence}
& Average
& \NO\ open-loop / VR
& \NO
& ---
& \(\mathcal O(T^{-1/4})\) \\
\hline
\textbf{OptMuon-A} [ours]
& Average
& \YES\ closed-loop scalar
& \YES
& BG
& \(\tilde{\mathcal O}(T^{-1/2}+\sigma^{1/2}T^{-1/4})\) \\
\hline\hline

Muon-MVR2 \cite{chang2025convergence}
& Individual
& \NO\ open-loop / VR
& \NO
& ---
& \(\mathcal O(T^{-1/3})\) \\
\hline

LiMuon \cite{huang2025limuon}
& Individual
& \NO\ open-loop
& \NO
& ---
& \(\mathcal O(T^{-1/3})\) \\
\hline

\textbf{OptMuon-I} [ours]
& Individual
& \YES\ closed-loop scalar + VR
& \YES
& BG
& \(\tilde{\mathcal O}(T^{-1/2}+\sigma^{1/3}T^{-1/3})\) \\
\hline
\end{tabular}}
\vspace{4pt}

\scalebox{0.96}{
\begin{tabular}{@{}p{\linewidth}@{}}
\footnotesize
\textbf{Note:}
{\color{black}{``Adaptive Scaling'' means that the effective update magnitude uses observed gradient or momentum statistics. ``CLZO Recovery'' means closed-loop zero-noise recovery: the same online scalar rule yields an explicit bound of the form \(\tilde{\mathcal O}(T^{-1/2}+\psi(\sigma,T))\), and hence recovers \(\tilde{\mathcal O}(T^{-1/2})\) when \(\sigma=0\), without retuning to the noise level. BG denotes almost-sure bounded stochastic gradients. VR denotes variance-reduction or recursive-gradient-estimator mechanisms.}}
\({}^{\dagger}\) NAMO has norm-based Adam-type adaptive scaling, but its update
still uses a prescribed learning rate; thus it is not counted as closed-loop
zero-noise recovery in the sense above.
\({}^{\ddagger}\) The displayed NAMO rate is the stochastic mini-batch rate under
the comparison convention; its deterministic guarantee is separate and uses a
different parameter scaling.
\end{tabular}}
\end{table*}

\section{Preliminaries}
\label{sect:prel}

This section introduces the notation, the orthogonalization operator, and the standing assumptions used in the main analysis.

\paragraph{Notation.}
For matrices $\A,\B\in\mathbb R^{m\times n}$, let $\langle \A,\B\rangle:=\tr(\A^\top\B)$ be the Frobenius inner product, and let $\|\A\|_{\fro}$, $\|\A\|_2$, and $\|\A\|_*$ denote the Frobenius, spectral, and nuclear norms, respectively.  We write $\EEE[\cdot]$ for total expectation and $\EEE_t[\cdot]:=\EEE[\cdot\mid\FF_{t-1}]$ for conditional expectation with respect to the natural filtration $\{\FF_t\}_{t\ge0}$.  The stochastic gradient at iteration $t$ is denoted by
\[
    \G_t:=\nabla f(\X_t;\xi_t),
    \qquad
    g_t:=\|\G_t\|_{\fro}.
\]

\subsection{Orthogonalization Operator}

For any matrix $\M\in\mathbb R^{m\times n}$ with $m\geq n$, let $\M=\U\Sigma\W^\top$ be a thin singular value decomposition, where $\U\in \mathbb{R}^{m\times n}$, $\Sigma\in \mathbb{R}^{n\times n}$, and $\W\in\mathbb{R}^{n \times n}$. Here, $\U^\top\U=\I_n$ and $\W^\top\W=\I_n$. We define the polar-factor orthogonalization operator by
\[
    \Orth(\M):=\U\W^\top.
\]
The analysis uses only the following elementary properties:
\begin{enumerate}[label=(P\arabic*),leftmargin=2.0em,itemsep=0.2ex]
\item \textbf{Spectral boundedness:} $\|\Orth(\M)\|_2=1$.
\item \textbf{Frobenius norm:} $\|\Orth(\M)\|_{\fro}^2=n$.
\item \textbf{Alignment:} $\langle \Orth(\M),\M\rangle=\|\M\|_*\ge \|\M\|_{\fro}$.
\end{enumerate}
\noi When \(\M\) is rank-deficient, \(\Orth(\M)\) denotes any measurable choice of a thin-SVD completion satisfying properties (P1)--(P3). In implementation, the exact polar factor can be replaced by a few Newton--Schulz iterations, as in Muon-style optimizers.  The present theory treats exact orthogonalization; inexact orthogonalization would introduce an additional approximation term.

\subsection{Standing Assumptions}

\begin{assumption}[Lower-bounded objective]
\label{ass:F}
There exists $f_*>-\infty$ such that $f(\X)\ge f_*$ for all $\X\in\mathbb R^{m\times n}$.
\end{assumption}

\begin{assumption}[Unbiased stochastic gradients with bounded variance]
\label{ass:sigma}
For every $\X\in\mathbb R^{m\times n}$,
\[
    \EEE_{\xi}\big[\nabla f(\X;\xi)\big]=\nabla f(\X),
    \qquad
    \EEE_{\xi}\big[\|\nabla f(\X;\xi)-\nabla f(\X)\|_{\fro}^2\big]\le \sigma^2 .
\]
\end{assumption}

\begin{assumption}[Smoothness]
\label{ass:f}
At least one of the following conditions holds.
\begin{enumerate}[label=\textbf{(\alph*)},leftmargin=18pt,itemsep=0.2ex]
\item \textbf{Average $L$-smoothness:}
\[
    \|\nabla f(\X)-\nabla f(\Y)\|_{\fro}
    \le L\|\X-\Y\|_{\fro},
    \qquad
    \forall \X,\Y\in\mathbb R^{m\times n}.
\]
\item \textbf{Individual $L$-smoothness:}
\[
    \|\nabla f(\X;\xi)-\nabla f(\Y;\xi)\|_{\fro}
    \le L\|\X-\Y\|_{\fro},
    \qquad
    \forall \X,\Y\in\mathbb R^{m\times n},~\forall \xi.
\]
\end{enumerate}
\end{assumption}

\begin{assumption}[Bounded stochastic gradients]
\label{ass:BG}
There exists $\hG>0$ such that, almost surely,
\[
    \|\nabla f(\X;\xi)\|_{\fro}\le \hG,
    \qquad
    \forall \X\in\mathbb R^{m\times n}.
\]
Consequently, Jensen's inequality gives $\|\nabla f(\X)\|_{\fro}\le \hG$ for all $\X$.
\end{assumption}

\begin{remark}
\ding{182} Assumptions~\ref{ass:F}--\ref{ass:f} are standard in stochastic nonconvex optimization. Assumption~\ref{ass:BG} is stronger and is used to obtain pathwise control of the adaptive normalizers, the estimator tracking error, and the orthogonalized update magnitude. We do not claim a bounded-gradient-free theory. \ding{183} The term ``Lipschitz-free'' is used only in the algorithmic sense: the parameters of OptMuon do not require the smoothness constant \(L\), the variance level \(\sigma^2\), or the bound $\hG$, although these quantities enter the constants in the convergence analysis.


\end{remark}

\section{The OptMuon Algorithms}
\label{sect:proposed}

In this section, we present {OptMuon}, a closed-loop momentum orthogonalization framework. The algorithm preserves the Muon-style update structure while replacing open-loop magnitude control with a trajectory-dependent adaptive schedule. The resulting update is Lipschitz-free in the algorithmic sense: no parameter is tuned using the smoothness constant \(L\).

To accommodate different smoothness regimes, {OptMuon} has two variants. \textbf{Option A} is designed for average smoothness and uses one stochastic gradient per iteration. \textbf{Option I} is designed for individual smoothness and uses a STORM-type recursive momentum estimator. Both variants share the same orthogonalized update template; they differ in the coefficient exponent \(q\), the estimator \(\M_t\), and the adaptive factor \(\gamma_t\).

\paragraph{\texorpdfstring{Steps S1 and S2: Adaptive Parameter Tracking.}{Steps S1 and S2: Adaptive Parameter Tracking.}} At iteration $t$, \textsc{OptMuon} samples a mini-batch $\xi_t$, computes the stochastic gradient
\beq
\G_t:=\nabla f(\X_t;\xi_t),
\qquad
g_t:=\|\G_t\|_{\fro},
\eeq
\noi and updates a scalar trajectory-dependent coefficient. The coefficient used in the $t$-th update is the lagged quantity $\alpha_t=\rho_{t-1}$. Hence $\alpha_t$ is $\mathcal F_{t-1}$-measurable, which ensures that the adaptive scaling is predictable with respect to the current stochastic gradient.

We initialize $\rho_{0}=1$, set $\alpha_t:=\rho_{t-1}$ for all $t\ge1$, and update $\rho_t$ after observing $g_t$ according to
\begin{equation}\label{eq:alpha:update}
\alpha_t=\rho_{t-1},
\qquad \text{with}\quad \rho_t=\left( \frac{1+\max_{1\le i\le t}g_i^2}
{1+\sum_{i=1}^{t}g_i^2}
\right)^q.
\end{equation}
Equivalently, $\alpha_1=1$, and the coefficient used at iteration $t$ depends only on the trajectory observed before sampling $\xi_t$. The tracker $\rho_t$ is updated only after $g_t$ has been observed, so no current-sample information enters $\alpha_t$. The denominator in \eqref{eq:alpha:update} plays the usual self-normalizing role of AdaGrad-Norm-type schedules, reducing the effective update magnitude as the accumulated stochastic-gradient energy grows.  The running maximum in the numerator counteracts excessive shrinkage after isolated gradient spikes.  The exponent $q$ is chosen according to the smoothness regime: $q=1/2$ for the average-smoothness analysis and $q=2/3$ for the individual-smoothness analysis.

The following elementary properties of the coefficient sequence are repeatedly used in the tracking-error estimates.

\begin{lemma}\label{lemma:prop:para}
(Proof in Appendix \ref{app:lemma:prop:para})
Assume $\rho_{-1}=\rho_0=1$ and $\rho_{t}=\left(
\frac{1+\max_{1\le i\le t}g_i^2}{1+\sum_{i=1}^{t}g_i^2}\right)^q$, where $q\in(0,1)$. Define $\NNN_T:=\max_{1\leq t\leq T}g_t^2$ and $\GGG_T:=\sum_{t=1}^T g_t^2$. Then, for all $1\leq t\leq T$, we have

\begin{enumerate}[label=\textbf{(\alph*)},leftmargin=18pt,itemsep=0.2ex]

\item $\frac{1}{\rho_{t}}-\frac{1}{\rho_{t-1}}\le q$, $\frac{\rho_{t-1}}{\rho_t}\le 2^q$, $(1-\rho_t)\sqrt{\tfrac{\rho_{t-1}}{\rho_t}} \le 1- (1-q)\rho_t$, and $\tfrac{\rho_T}{\rho_t}\leq \big(1+ \NNN_T \big)^q$.

\item $\sum_{t=0}^{T-1} \max(0,\rho_t-\rho_{t+1}) \leq 2 \ln(e+\NNN_T)$, and $\sum_{t=0}^{T} |\tfrac{\rho_{t-1}}{\rho_{t}} -1| \leq 4 \ln(1+\GGG_{T})$.

\end{enumerate}

\end{lemma}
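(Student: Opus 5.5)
The plan is to work with the two monotone sequences $a_t:=1+\max_{1\le i\le t}g_i^2$ and $b_t:=1+\sum_{i=1}^t g_i^2$, with $a_0=b_0=1$, and the ratio $r_t:=b_t/a_t\ge1$, so that $\rho_t=r_t^{-q}\le 1$. Every claim will reduce to one of two elementary facts: a one-step bound on how much $r_t$ can grow, and telescoping of $\ln b_t$ and $\ln a_t$.

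\textbf{Part (a).} First I show $r_t\le r_{t-1}+1$. Since $a_t\ge a_{t-1}$ and $b_t=b_{t-1}+g_t^2$, we get $r_t\le b_{t-1}/a_{t-1}+g_t^2/a_t$, and $g_t^2\le a_t-1<a_t$. Then $1/\rho_t=r_t^q$, and concavity of $s\mapsto s^q$ with $r_{t-1}\ge1$ gives $r_t^q-r_{t-1}^q\le q\,r_{t-1}^{q-1}\le q$. This is the first inequality.

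For the second, $\rho_{t-1}/\rho_t=(r_t/r_{t-1})^q\le(1+1/r_{t-1})^q\le 2^q$.

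For the third, set $x=1/\rho_{t-1}\ge1$ and $y=1/\rho_t$, so that $y\le x+q$. If $y\le x$, the square root is at most $1$ and the claim is immediate from $1-\rho_t\le 1-(1-q)\rho_t$. Otherwise $y/x\le y/\max(1,y-q)$. The claim then becomes a one-variable inequality $(1-1/y)\sqrt{y/\max(1,y-q)}\le 1-(1-q)/y$ for $y\ge1$, which I will verify by squaring and checking the two regimes $y\le 1+q$ and $y>1+q$ separately. \emph{This is the step I expect to be the main computational obstacle.}

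For the fourth, $\rho_T/\rho_t=(a_Tb_t/(a_tb_T))^q\le(a_T/a_t)^q\le a_T^q=(1+\NNN_T)^q$, using $b_t\le b_T$ and $a_t\ge1$.

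\textbf{Part (b), first sum.} I avoid bounding decreases directly and use total-variation bookkeeping: the sum of decreases equals the sum of increases plus $\rho_0-\rho_T\le 1$. Between steps, $\rho_{t+1}/\rho_t=(a_{t+1}/a_t)^q(b_t/b_{t+1})^q\le(a_{t+1}/a_t)^q$. Hence any increase satisfies $\rho_{t+1}-\rho_t\le\rho_{t+1}\bigl(1-(a_t/a_{t+1})^q\bigr)\le q\ln(a_{t+1}/a_t)$, using $\rho_{t+1}\le1$ and $1-e^{-s}\le s$. Telescoping gives total increase $\le q\ln a_T=q\ln(1+\NNN_T)$. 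So the sum of decreases is at most $1+q\ln(1+\NNN_T)\le 2\ln(e+\NNN_T)$.

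\textbf{Part (b), second sum.} The $t=0$ term vanishes because $\rho_{-1}=\rho_0$. For $t\ge1$, write $x_t=\rho_{t-1}/\rho_t=(b_t/b_{t-1})^q(a_{t-1}/a_t)^q$.

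If $x_t\ge1$, then by (a) $x_t\in[1,2^q]\subseteq[1,2]$. Convexity of $e^s$ on $[0,\ln 2]$ gives $x_t-1\le \ln x_t/\ln 2\le 2\ln x_t\le 2q\ln(b_t/b_{t-1})$.

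If $x_t<1$, then $1-x_t\le-\ln x_t\le q\ln(a_t/a_{t-1})$.

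Summing, the first case telescopes to at most $2q\ln b_T=2q\ln(1+\GGG_T)$, and the second to at most $q\ln a_T\le q\ln(1+\GGG_T)$ since $\NNN_T\le\GGG_T$. The total is at most $3q\ln(1+\GGG_T)\le 4\ln(1+\GGG_T)$.
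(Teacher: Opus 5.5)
Your proposal is correct and follows the paper's skeleton. The paper also works with $u_t=(1+\GGG_t)/(1+\NNN_t)$ (your $r_t$) and proves $u_t\le u_{t-1}+1$. It gets the first, second and fourth claims of (a) exactly as you do, and it handles the first sum in (b) by the same ``decreases $=$ increases $+\rho_0-\rho_T$'' bookkeeping, with increases bounded by $q\ln\frac{1+\NNN_{t+1}}{1+\NNN_t}$.

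Two of your steps are slightly tidier than the paper's. Your one-line bound $r_t\le r_{t-1}+g_t^2/a_t$ replaces its two-case computation. Your sign split in the second sum of (b) yields $3q\ln(1+\GGG_T)$; the paper instead uses $|y^q-1|\le 2|\ln y|$ and bounds $|\ln\frac{b_t}{b_{t-1}}-\ln\frac{a_t}{a_{t-1}}|$ by the sum of the two logarithms, getting $2\ln(1+\GGG_T)+2\ln(1+\NNN_T)$.

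The real difference is the third claim of (a), which you left open. Your one-variable inequality does hold, with $z=y-1$:
\begin{itemize}
\item for $y\le 1+q$ it reduces to $(y-1)(\sqrt{y}-1)\le q$, which is immediate from $z\le q$ and $\sqrt{y}-1<1$;
\item for $y>1+q$, squaring and multiplying by $y-q>0$ leaves $qz^2+(2q-q^2)z+q^2(1-q)\ge 0$.
\end{itemize}

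The paper avoids all case analysis. Rearranging the first claim of (a) gives $\rho_{t-1}/\rho_t\le 1/(1-q\rho_t)$, which is your $x\ge y-q$. Since this bound is at least $1$, its square root can be replaced by the bound itself, so $(1-\rho_t)\sqrt{\rho_{t-1}/\rho_t}\le\frac{1-\rho_t}{1-q\rho_t}\le 1-(1-q)\rho_t$. The last step is the identity $1-x=(1-qx)\bigl(1-(1-q)x\bigr)-q(1-q)x^2$.

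So neither your split on $y\le x$ nor the constraint $x\ge 1$ is needed. The paper's route is shorter; yours is a valid but more laborious verification of the same bound.
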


A conservative alternative is to replace $\rho_t$ by its monotone envelope, $\rho_t^{\rm mon}:=
\min_{1\le k\le t} \rho_{k}$. This monotone variant can be analyzed with the same order of convergence guarantee, and the proof is somewhat simpler because the coefficient variation is one-sided.  We do not impose this monotonicity in the main algorithm.  Allowing the running-maximum corrected coefficient to be non-monotone makes the adaptive rule more responsive to the realized trajectory, while Lemma \ref{lemma:prop:para} shows that its oscillation remains only logarithmic.  These logarithmic variation bounds are sufficient for the OptMuon tracking analysis and make the monotone envelope unnecessary.

\paragraph{Step S3: Dual-Track Momentum and Step-Size.}
The two variants differ in the construction of the momentum matrix and the scalar factor.

\begin{enumerate}[label=\textbf{(\alph*)}, leftmargin=18pt, itemsep=2pt, topsep=1pt, parsep=0pt, partopsep=0pt]

\item \textbf{Option A (Average Smoothness, \(q=1/2\)):}
The momentum matrix accumulates the current stochastic gradient with an adaptive decay:
\beq
\M_t=\G_t+(1-\alpha_t)\M_{t-1}.\nn
\eeq
The scalar factor is
\beq
\ts \gamma_t = \min\left\{
\alpha_t^2,\,
\alpha_t\left(1+\sum_{j=1}^t \alpha_j\|\M_j\|_{\fro}^2\right)^{-1/2}
\right\}. \nn
\eeq
This choice couples the update magnitude to both the coefficient \(\alpha_t\) and the cumulative momentum energy.

\item \textbf{Option I (Individual Smoothness, \(q=2/3\)):}
The momentum estimator uses a STORM-type recursive correction:
\beq
\ts\M_t=(1-\alpha_t)\big(\M_{t-1}-\nabla f(\X_{t-1};\xi_t)\big)+\G_t.\nn
\eeq
The scalar factor is
\beq
\ts \gamma_t = \min\left\{
\sqrt{\alpha_t},\,
\left(1+\sum_{j=1}^t \alpha_j^{-1/2}\|\M_j\|_{\fro}^2\right)^{-1/2}
\right\}. \nn
\eeq
{{This variant evaluates two stochastic gradients on the same fresh sample or mini-batch, at $\X_t$ and $\X_{t-1}$, and exploits individual smoothness to obtain a sharper stochastic term; the theorem is stated per $T$ iterations, so the gradient-evaluation complexity differs only by a constant factor.}}
\end{enumerate}

\paragraph{Step S4: Orthogonalized Update Rule.}
Unlike standard SGD or Adam, {OptMuon} explicitly orthogonalizes the momentum matrix:
\begin{equation}\label{eq:OptMuon:update}
\X_{t+1}=\X_t-\theta\gamma_t\|\M_t\|_{\fro}\operatorname{Orth}(\M_t).
\end{equation}
The operator \(\operatorname{Orth}(\M_t)\) extracts a geometry-normalized matrix direction, while the scalar factor \(\theta\gamma_t\|\M_t\|_{\fro}\) controls the magnitude. Therefore, the direction and magnitude of the update are separated: the direction is governed by the polar factor of the momentum, and the magnitude is governed by a closed-loop scalar schedule. The polar factor can be computed by SVD in theory and approximated efficiently by a few Newton-Schulz iterations in practice. The present analysis focuses on exact orthogonalization; inexact Newton-Schulz errors can be incorporated as an additional approximation term in future work.

We summarize both configurations in Algorithm \ref{alg:main}.

\begin{algorithm}[!h]
\caption{OptMuon}
\SetKwInOut{Config}{Configuration}
\KwIn{Learning rate \(\theta>0\), initial solution \(\X_1\).}
\textbf{Initialize:} Set $\rho_{-1}=\rho_0=1$, $\alpha_0=\alpha_1=1$, $\M_0=\zero$, and $\X_0=\X_1$.


\noi \textbf{Option A} (Average smoothness): Set \(q=1/2\);

\noi \textbf{Option I} (Individual smoothness): Set \(q=2/3\).

\For{\(t=1,\ldots,T\)}
{
    \textbf{S1)} Sample \(\xi_t\sim\mathcal{D}\), compute \(\G_t=\nabla f(\X_t;\xi_t)\), and set \(g_t=\|\G_t\|_{\fro}\).

    \textbf{S2)} Update $\alpha_{t}=\rho_{t-1}$, where $\rho_t=\left(\frac{1+\max_{1\le i\le t}g_i^2}{1+\sum_{i=1}^t g_i^2}
    \right)^q$ with $\rho_{-1}=\rho_0=1$.

    \textbf{S3)} Select the smoothness regime and update \(\M_t\) and \(\gamma_t\):

    \textbf{Option A}: Set $\M_t=\G_t+(1-\alpha_t)\M_{t-1}$,

    $\quad\quad\quad\quad~~$ and $\gamma_t=\min\left\{\alpha_t^2,\alpha_t\left(1+\sum_{j=1}^t\alpha_j\|\M_j\|_{\fro}^2\right)^{-1/2}\right\}$.

    \textbf{Option I}: Set $\M_t=(1-\alpha_t)(\M_{t-1}-\nabla f(\X_{t-1};\xi_t))+\G_t$,

    $\quad\quad\quad\quad~~$ and $\gamma_t=\min\left\{\sqrt{\alpha_t},\left(1+\sum_{j=1}^t \tfrac{1}{\sqrt{\alpha_j}}\|\M_j\|_{\fro}^2\right)^{-1/2}\right\}$.

    \textbf{S4)} Update $\X_{t+1}=\X_t-\theta\gamma_t\|\M_t\|_{\fro}\operatorname{Orth}(\M_t)$.
}
\label{alg:main}
\end{algorithm}

\section{Convergence Guarantees}
\label{sect:rate}

In this section, we state the convergence guarantees for {OptMuon}. The proofs are deferred to the appendix.

\paragraph{Notation.}
Let \(g_t:=\|\G_t\|_{\fro}\) and \(m_t:=\|\M_t\|_{\fro}\), and $\epsilons_t := \mathbf G_t - \nabla f(\mathbf X_t)$. For {OptMuon-A}, define $\V_t:=\alpha_t\M_t$, $\S_t:=\V_t-\nabla f(\X_t)$. For {OptMuon-I}, define $\S_t:=\M_t-\nabla f(\X_t)$. We use \(\EEE[\cdot]\) for total expectation and
\(\EEE_t[\cdot]:=\EEE[\cdot\mid\FF_{t-1}]\) for conditional expectation. The natural filtration is denoted by \(\{\FF_t\}_{t\ge0}\).

\subsection{Analysis for OptMuon-A}
\label{sect:it:L}

We first consider {OptMuon-A} under average smoothness. Recall that \(q=1/2\). Throughout this subsection, define $\V_t:=\alpha_t\M_t$, $\S_t:=\V_t-\nabla f(\X_t)$, $\bar\gamma_t:=\frac{\gamma_t}{\alpha_t}$. By the definition of \(\gamma_t\), the scalar actually appearing in the scaled \(\V_t\)-update is $\bar\gamma_t = \min\{\alpha_t, \beta_t\}$, where $\beta_t:=\big(1+\sum_{j=1}^t \tfrac{1}{\alpha_j}\|\V_j\|_{\fro}^2\big)^{-1/2}$. Thus the algorithmic update can be equivalently written as $\X_{t+1}=\X_t-\theta\bar\gamma_t\|\V_t\|_{\fro}\Orth(\V_t)$, because \(\Orth(\alpha_t\M_t)=\Orth(\M_t)\) for \(\alpha_t>0\). This notation separates the proof from the raw momentum \(\M_t\) and avoids mixing \(\gamma_t\) with \(\gamma_t/\alpha_t\).

\begin{lemma} [Preliminary Bounds] \label{lemma:OptMuonA:Prel:Bound}
(Proof in Appendix \ref{app:lemma:OptMuonA:Prel:Bound}) Define $\lambda:=1+\hG^2$, and $\varpi:=\sqrt{\lambda}$, and $\bar\gamma_t:=\gamma_t/\alpha_t$. For all $1\leq t\leq T$, it holds that $\tfrac{\alpha_T}{\alpha_t}\leq \varpi$, $\tfrac{\bar{\gamma}_T}{\bar{\gamma}_t}\leq \varpi$, $\|\V_t\|_{\fro}\leq \varpi^2$, $\|\S_t\|_{\fro}\leq 2 \varpi^2$, and $\|\epsilons_t\|_{\fro} \leq 2\hG$ almost surely.
\end{lemma}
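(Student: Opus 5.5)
The plan is to get the first bound directly from Lemma~\ref{lemma:prop:para}(a) with $q=1/2$, derive the remaining bounds from it and from Assumption~\ref{ass:BG}, and control $\|\V_t\|_{\fro}$ by unrolling the momentum recursion. Throughout I use $\hG\le\varpi$ and $\varpi\ge1$.

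\textbf{Step 1: the ratio $\alpha_T/\alpha_t$.} Since $\max_i g_i^2\le\sum_i g_i^2$, every $\rho_t\le1$, and $\rho_0=1$. Because $\alpha_t=\rho_{t-1}$, the ratio $\alpha_T/\alpha_t$ equals $\rho_{T-1}/\rho_{t-1}$. If $t=1$, the denominator is $\rho_0=1$ and the ratio is at most $1\le\varpi$. Otherwise Lemma~\ref{lemma:prop:para}(a), applied with horizon $T-1$ and $q=1/2$, gives $\rho_{T-1}/\rho_{t-1}\le(1+\NNN_{T-1})^{1/2}$. Assumption~\ref{ass:BG} gives $\NNN_{T-1}\le\hG^2$, so the ratio is at most $\sqrt{\lambda}=\varpi$. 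The same argument works with $T$ replaced by any $s\ge t$, so $\alpha_s\le\varpi\,\alpha_k$ for all $k\le s$.

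\textbf{Step 2: the ratio $\bar\gamma_T/\bar\gamma_t$.} Recall $\bar\gamma_t=\min\{\alpha_t,\beta_t\}$. The sum defining $\beta_t$ only grows with $t$, so $\beta_t$ is nonincreasing and $\beta_T\le\beta_t$. Combining this with Step 1 and $\varpi\ge1$,
\[
\bar\gamma_T\le\min\{\varpi\alpha_t,\beta_t\}\le\varpi\min\{\alpha_t,\beta_t\}=\varpi\bar\gamma_t .
\]

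\textbf{Step 3: the bound on $\|\V_t\|_{\fro}$ (main obstacle).} The recursion for $\V_t$ involves the factor $\alpha_t/\alpha_{t-1}$, which can exceed $1$, so a naive induction does not close. Instead I unroll $\M_t$ from $\M_0=\zero$:
\[
\M_t=\sum_{i=1}^t \G_i\prod_{k=i+1}^t(1-\alpha_k).
\]
By Step 1 with $s=t$, each $k\le t$ satisfies $\alpha_k\ge\alpha_t/\varpi$. Hence $\prod_{k=i+1}^t(1-\alpha_k)\le(1-\alpha_t/\varpi)^{t-i}$. Summing the geometric series gives
\[
\|\V_t\|_{\fro}\le\alpha_t\,\hG\sum_{j\ge0}(1-\alpha_t/\varpi)^j=\varpi\,\hG\le\varpi^2 .
\]

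\textbf{Step 4: the remaining bounds.} Assumption~\ref{ass:BG} also gives $\|\nabla f(\X_t)\|_{\fro}\le\hG$ via Jensen's inequality. The triangle inequality then yields
\[
\|\S_t\|_{\fro}\le\varpi^2+\hG\le2\varpi^2,
\qquad
\|\epsilons_t\|_{\fro}\le\|\G_t\|_{\fro}+\|\nabla f(\X_t)\|_{\fro}\le2\hG .
\]
All these bounds hold pathwise, hence almost surely.
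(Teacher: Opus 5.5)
Your proof is correct. Steps 1, 2 and 4 coincide with the paper's argument: the ratio bound comes from Lemma~\ref{lemma:prop:para}(a) with $\NNN\le\hG^2$, the $\bar\gamma$ ratio from monotonicity of $\beta_t$, and the last two bounds from the triangle inequality plus Jensen. The only difference is Step~3, where the paper skips the geometric series. It writes $\M_t=\alpha_t(\G_t/\alpha_t)+(1-\alpha_t)\M_{t-1}$ as a convex combination, so induction on $\M_t$ (not $\V_t$) closes and gives $\|\M_t\|_{\fro}\le\max_{i\le t}\|\G_i\|_{\fro}/\alpha_i$; the same ratio bound $\alpha_t/\alpha_i\le\varpi$ that you use then yields $\|\V_t\|_{\fro}\le\varpi\hG$, the same constant as your unrolled estimate.
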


\begin{lemma}\label{lemma:OptMuonA:sumV:byG}
(Proof in Appendix \ref{app:lemma:OptMuonA:sumV:byG}) For all \(T\ge1\), $\sum_{t=1}^T\|\V_t\|_{\fro}^2 \le \kappa\sum_{t=1}^T\|\nabla f(\X_t;\xi_t)\|_{\fro}^2$, where $\kappa:=2(1 + \hG)$.
\end{lemma}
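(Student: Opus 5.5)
The plan is to unroll the momentum recursion and apply a weighted Cauchy--Schwarz inequality. The key input is the ratio bound $\alpha_T/\alpha_t\le\varpi$ from Lemma~\ref{lemma:OptMuonA:Prel:Bound}; it is what keeps the exponential weights under control even though $\alpha_t$ is not monotone.

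\textbf{Step 1 (unrolling).} Since $\M_0=\zero$ and $\M_t=\G_t+(1-\alpha_t)\M_{t-1}$, we have
\[
\M_t=\sum_{s=1}^t w_{t,s}\G_s,\qquad w_{t,s}:=\prod_{k=s+1}^t(1-\alpha_k)\in[0,1],
\]
with the convention $w_{t,t}=1$. This uses $\alpha_k\in(0,1]$, which holds because $\rho_k\le 1$.

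\textbf{Step 2 (Cauchy--Schwarz).} Writing $\V_t=\alpha_t\M_t$ and splitting each weight as $w_{t,s}=\sqrt{w_{t,s}}\cdot\sqrt{w_{t,s}}$, we get
\[
\|\V_t\|_{\fro}^2\le \Big(\alpha_t\sum_{s=1}^t w_{t,s}\Big)\Big(\alpha_t\sum_{s=1}^t w_{t,s}\,g_s^2\Big).
\]
Summing over $t$ and exchanging the order of summation gives
\[
\sum_{t=1}^T\|\V_t\|_{\fro}^2\le C_1\sum_{s=1}^T g_s^2\sum_{t=s}^T\alpha_t w_{t,s}\le C_1C_2\sum_{s=1}^T g_s^2,
\]
where $C_1:=\sup_t \alpha_t\sum_{s\le t}w_{t,s}$ and $C_2:=\sup_s\sum_{t\ge s}\alpha_t w_{t,s}$.

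\textbf{Step 3 (bounding $C_1$; the main obstacle).} If $\alpha$ were nonincreasing, $C_1\le1$ would be immediate. Here $\alpha_t$ can jump upward when a new running maximum appears, so this is the delicate point. I would apply Lemma~\ref{lemma:OptMuonA:Prel:Bound} with horizon $t$ in place of $T$; this is legitimate because its proof is uniform in the horizon. It gives $\alpha_k\ge \alpha_t/\varpi$ for all $k\le t$. Hence $w_{t,s}\le(1-\alpha_t/\varpi)^{t-s}$, and summing the geometric series,
\[
\alpha_t\sum_{s\le t}w_{t,s}\le \alpha_t\cdot\frac{\varpi}{\alpha_t}=\varpi .
\]
So $C_1\le\varpi$.

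\textbf{Step 4 (bounding $C_2$).} For the second factor I would use a telescoping identity. For $t>s$,
\[
\alpha_t w_{t,s}\le \alpha_t\prod_{k=s+1}^{t-1}(1-\alpha_k)=\prod_{k=s+1}^{t-1}(1-\alpha_k)-\prod_{k=s+1}^{t}(1-\alpha_k).
\]
Summing over $t=s+1,\dots,T$ therefore gives at most $1$. The term $t=s$ contributes $\alpha_s\le1$, so $C_2\le 2$.

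\textbf{Conclusion.} Combining the two bounds yields
\[
\sum_{t=1}^T\|\V_t\|_{\fro}^2\le 2\varpi\sum_{t=1}^T g_t^2 .
\]
Finally, $2\varpi=2\sqrt{1+\hG^2}\le 2(1+\hG)=\kappa$, which is the claimed bound.
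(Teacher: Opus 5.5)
Your proof is correct and follows the paper's argument. You unroll $\V_t=\sum_i a_{t,i}\G_i$, apply Cauchy--Schwarz, bound the row sums using the ratio bound on $\alpha$, and bound the column sums by $2$ via the same telescoping. The only difference is in the row sum: you dominate the product by the geometric series $(1-\alpha_t/\varpi)^{t-s}$, whereas the paper replaces $\alpha_t$ by $(1+\hG)\alpha_i$ and telescopes $\sum_i\alpha_i\prod_{j>i}(1-\alpha_j)\le 1$; this is why you land on the slightly sharper constant $2\varpi\le\kappa$.
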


\begin{lemma}\label{lemma:VR:bound:A}
(Proof in Appendix \ref{app:lemma:VR:bound:A}) Define $\S_t:=\V_t -\nabla f(\X_t)$, and $\epsilons_t := \mathbf G_t - \nabla f(\mathbf X_t)$. For all $t\geq 1$, we have:
\beq \label{eq:recursive:A}
\ts \|\mathbf S_t\|_{\mathrm F}^2~~\leq~~ \ts  (1-\alpha_{t} ) \|\S_{t-1}\|_{\fro}^2 +  B_t \quad a.s.,
\eeq
\noi where $B_t:=  2 \alpha_{t}^2\|\epsilons_t\|_{\fro}^2 + 2 \alpha_{t} (1-\alpha_{t}) (1+\alpha_{t}) \la \S_{t-1} ,\epsilons_{t}\ra + 4 \varpi^5 |\tfrac{\alpha_{t-1}}{\alpha_{t}}-1| + 3\delta \frac{\gamma_{t-1}^2m_{t-1}^2}{ \alpha_{t-1}} $, and $\delta:= n L^2\theta^2$.

\end{lemma}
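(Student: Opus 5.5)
We derive a one-step recursion for $\S_t=\V_t-\nabla f(\X_t)$ directly from the Option A momentum update, then expand the square.

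\textbf{The $\V_t$ recursion.} Multiplying $\M_t=\G_t+(1-\alpha_t)\M_{t-1}$ by $\alpha_t$ gives
\[
\V_t=\alpha_t\G_t+(1-\alpha_t)\tfrac{\alpha_t}{\alpha_{t-1}}\V_{t-1}.
\]
Write $r_t:=\tfrac{\alpha_t}{\alpha_{t-1}}-1$. Subtracting $\nabla f(\X_t)$ and inserting $\pm(1-\alpha_t)\nabla f(\X_{t-1})$ yields
\[
\S_t=(1-\alpha_t)\S_{t-1}+\alpha_t\epsilons_t+(1-\alpha_t)r_t\V_{t-1}+(1-\alpha_t)\big(\nabla f(\X_{t-1})-\nabla f(\X_t)\big).
\]
Call the last two terms the \emph{lag error} $\mathbf{E}^{(1)}_t$ and the \emph{drift error} $\mathbf{E}^{(2)}_t$.

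\textbf{Bounding the error terms.}
\begin{itemize}
\item By average smoothness, the update rule, and (P2),
\[
\|\mathbf{E}^{(2)}_t\|_{\fro}\le L\theta\gamma_{t-1}m_{t-1}\sqrt n,
\qquad\text{so}\qquad
\|\mathbf{E}^{(2)}_t\|_{\fro}^2\le\delta\gamma_{t-1}^2m_{t-1}^2 .
\]
\item By Lemma~\ref{lemma:OptMuonA:Prel:Bound}, $\|\V_{t-1}\|_{\fro}\le\varpi^2$. Hence $\|\mathbf{E}^{(1)}_t\|_{\fro}\le\varpi^2|r_t|$, and $|r_t|$ is comparable to $|\tfrac{\alpha_{t-1}}{\alpha_t}-1|$ up to the factor $\alpha_t/\alpha_{t-1}\le 2^q$ from Lemma~\ref{lemma:prop:para}(a).
\end{itemize}

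\textbf{Expanding the square.} Expand
\[
\|(1-\alpha_t)\S_{t-1}+\alpha_t\epsilons_t+\mathbf{E}_t\|_{\fro}^2,
\qquad \mathbf{E}_t:=\mathbf{E}^{(1)}_t+\mathbf{E}^{(2)}_t .
\]
The lag error is linear in $|r_t|$. Every inner product involving $\mathbf{E}^{(1)}_t$ can be bounded crudely using $\|\S_{t-1}\|_{\fro}\le2\varpi^2$, $\|\epsilons_t\|_{\fro}\le2\hG\le2\varpi$, and $\|\mathbf{E}^{(1)}_t\|_{\fro}\le2\varpi^2$. All such terms are then at most a constant times $\varpi^5|\tfrac{\alpha_{t-1}}{\alpha_t}-1|$; using $\varpi\ge1$ absorbs the mixed powers of $\varpi$. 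Tracking constants should give the coefficient $4$.

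The drift error $\mathbf{E}^{(2)}_t$ is handled by Young's inequality.
\begin{itemize}
\item Pair it with $(1-\alpha_t)\S_{t-1}$ with weight $\alpha_t$:
\[
2\la(1-\alpha_t)\S_{t-1},\mathbf{E}^{(2)}_t\ra\le\alpha_t(1-\alpha_t)^2\|\S_{t-1}\|_{\fro}^2+\tfrac{1}{\alpha_t}\|\mathbf{E}^{(2)}_t\|_{\fro}^2 .
\]
\item Then $(1-\alpha_t)^2(1+\alpha_t)\le1-\alpha_t$ recovers the contraction factor $(1-\alpha_t)$.
\item Use $1/\alpha_t\le 2^q/\alpha_{t-1}\le 2/\alpha_{t-1}$, together with the direct square $\|\mathbf{E}^{(2)}_t\|_{\fro}^2\le\delta\gamma_{t-1}^2m_{t-1}^2\le\delta\gamma_{t-1}^2m_{t-1}^2/\alpha_{t-1}$ (since $\alpha_{t-1}\le1$). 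This gives the $3\delta\gamma_{t-1}^2m_{t-1}^2/\alpha_{t-1}$ term.
\end{itemize}

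The noise cross term $2\la\alpha_t\epsilons_t,\mathbf{E}^{(2)}_t\ra$ is split by Young as $\alpha_t^2\|\epsilons_t\|_{\fro}^2+\|\mathbf{E}^{(2)}_t\|_{\fro}^2$. Together with the direct $\alpha_t^2\|\epsilons_t\|_{\fro}^2$, this gives the factor $2\alpha_t^2$.

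\textbf{The cross term $\la\S_{t-1},\epsilons_t\ra$.} This term is kept exactly, since it has zero conditional mean because $\alpha_t$ is $\FF_{t-1}$-measurable. The direct expansion gives only $2\alpha_t(1-\alpha_t)\la\S_{t-1},\epsilons_t\ra$. The stated coefficient $(1+\alpha_t)$ suggests the contraction was instead split with the weight $(1+\alpha_t)$, i.e.\ $\|a+b\|^2\le(1+\alpha_t)\|a\|^2+(1+1/\alpha_t)\|b\|^2$. That form would naturally produce $2\alpha_t(1-\alpha_t)(1+\alpha_t)\la\S_{t-1},\epsilons_t\ra$ from the grouping $a=(1-\alpha_t)\S_{t-1}+\alpha_t\epsilons_t$ and $b=\mathbf{E}_t$. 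I would adopt that grouping:
\begin{itemize}
\item $(1+\alpha_t)\|a\|_{\fro}^2$ yields $(1-\alpha_t)^2(1+\alpha_t)\|\S_{t-1}\|_{\fro}^2\le(1-\alpha_t)\|\S_{t-1}\|_{\fro}^2$, plus $(1+\alpha_t)\alpha_t^2\|\epsilons_t\|_{\fro}^2\le2\alpha_t^2\|\epsilons_t\|_{\fro}^2$, plus the exact cross term.
\item $(1+1/\alpha_t)\|b\|_{\fro}^2\le\tfrac{2}{\alpha_t}\|\mathbf{E}_t\|_{\fro}^2$. The drift part then gives the $\delta$ term with constant $\le3$ after $\alpha_{t-1}/\alpha_t\le 2^{1/2}$ adjustments.
\end{itemize}

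\textbf{Main obstacle.} The hardest part is the lag part of $\tfrac{2}{\alpha_t}\|\mathbf{E}_t\|_{\fro}^2$, which carries a $1/\alpha_t$ factor. It must be handled so that the result is linear in $|\tfrac{\alpha_{t-1}}{\alpha_t}-1|$ without a $1/\alpha_t$ blowup. The key identity is
\[
\tfrac{|r_t|}{\alpha_t}=\tfrac{|\alpha_t-\alpha_{t-1}|}{\alpha_t\alpha_{t-1}}=\Big|\tfrac1{\alpha_{t-1}}-\tfrac1{\alpha_t}\Big|.
\]
This quantity is bounded by $q$ when $\alpha$ decreases (Lemma~\ref{lemma:prop:para}(a)), and otherwise by $\tfrac{1}{\alpha_{t-1}}-1\le\varpi$ type bounds via Lemma~\ref{lemma:OptMuonA:Prel:Bound}. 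Then
\[
\tfrac{1}{\alpha_t}\|\mathbf{E}^{(1)}_t\|_{\fro}^2\le\varpi^4|r_t|\cdot\tfrac{|r_t|}{\alpha_t}\lesssim\varpi^5|r_t|.
\]
Matching the exact constant $4$ is where I expect the bookkeeping to be delicate.
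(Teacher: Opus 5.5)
There is a genuine gap at exactly the step you call the main obstacle. In the grouping you finally adopt, $b=\mathbf{E}_t$ contains the lag error $\mathbf{E}^{(1)}_t=(1-\alpha_t)r_t\mathbf{V}_{t-1}$. It is therefore multiplied by $1+1/\alpha_t$, and you need $\tfrac{1}{\alpha_t}\|\mathbf{E}^{(1)}_t\|_{\fro}^2\lesssim\varpi^5|\tfrac{\alpha_{t-1}}{\alpha_t}-1|$. Your identity $\tfrac{|r_t|}{\alpha_t}=|\tfrac{1}{\alpha_{t-1}}-\tfrac{1}{\alpha_t}|$ settles only the case where $\alpha$ decreases. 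When $\alpha$ jumps up (a new running maximum), Lemma~\ref{lemma:OptMuonA:Prel:Bound} controls only the ratio $\alpha_t/\alpha_{t-1}\le\varpi$, not $1/\alpha_{t-1}$ itself, which can grow like $\sqrt{t}$. Hence $\tfrac{1}{\alpha_{t-1}}-\tfrac{1}{\alpha_t}=\tfrac{1}{\alpha_{t-1}}\bigl(1-\tfrac{\alpha_{t-1}}{\alpha_t}\bigr)$ is of order $1/\alpha_{t-1}$ and unbounded. Concretely, take many stochastic gradients of small norm $\sqrt{\nu}$ in a fixed direction, followed by one of norm $\hG$. Then $r_t$ is a positive constant, $\|\mathbf{V}_{t-1}\|_{\fro}^2\approx\nu$, and $\tfrac{1}{\alpha_t}\|\mathbf{E}^{(1)}_t\|_{\fro}^2$ grows like $\sqrt{t}$. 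Meanwhile $4\varpi^5|\tfrac{\alpha_{t-1}}{\alpha_t}-1|$ and $3\delta\gamma_{t-1}^2m_{t-1}^2/\alpha_{t-1}\le 3\delta\alpha_{t-1}\|\mathbf{V}_{t-1}\|_{\fro}^2$ stay bounded, so this route cannot yield the stated $B_t$. Relatedly, Lemma~\ref{lemma:prop:para}(a) bounds $\alpha_{t-1}/\alpha_t\le 2^q$, not $\alpha_t/\alpha_{t-1}$. Upward moves are bounded only by $\varpi$, which is where the fifth power of $\varpi$ comes from.

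Your earlier remark about bounding the inner products with $\mathbf{E}^{(1)}_t$ crudely points to the repair, and it is what the paper does: keep the lag error out of the Young split altogether. Since $\mathbf{S}_t-\mathbf{E}^{(1)}_t=a+\mathbf{E}^{(2)}_t$,
\[
\|\mathbf{S}_t\|_{\fro}^2=\|a+\mathbf{E}^{(2)}_t\|_{\fro}^2+2\la\mathbf{S}_t,\mathbf{E}^{(1)}_t\ra-\|\mathbf{E}^{(1)}_t\|_{\fro}^2\le\|a+\mathbf{E}^{(2)}_t\|_{\fro}^2+2\|\mathbf{S}_t\|_{\fro}\|\mathbf{E}^{(1)}_t\|_{\fro}.
\]
Use the a priori bound $\|\mathbf{S}_t\|_{\fro}\le 2\varpi^2$ (for $\mathbf{S}_t$, not $\mathbf{S}_{t-1}$) together with $\|\mathbf{E}^{(1)}_t\|_{\fro}\le\varpi^2\tfrac{|\alpha_t-\alpha_{t-1}|}{\alpha_{t-1}}\le\varpi^3|\tfrac{\alpha_{t-1}}{\alpha_t}-1|$. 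The lag then enters linearly, with no $1/\alpha_t$ factor and with exactly the constant $4\varpi^5$. The split $(1+\alpha_t)\|a\|_{\fro}^2+(1+1/\alpha_t)\|\mathbf{E}^{(2)}_t\|_{\fro}^2$ is applied only to $a+\mathbf{E}^{(2)}_t$. Your expansion of $(1+\alpha_t)\|a\|_{\fro}^2$ is correct and produces the stated cross term. Finally, $1+1/\alpha_t\le 2/\alpha_t\le 3/\alpha_{t-1}$ (from $\alpha_{t-1}\le\tfrac32\alpha_t$) gives the $3\delta$ term.
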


\begin{lemma}[One-Step Descent Inequality]\label{lemma:suff:descent:A}
(Proof in Appendix \ref{app:lemma:suff:descent:A})
Define \(c_0:=\theta/2\), \(c_1:=\theta n/2\), and \(c_2:=Ln\theta^2/2\). For all \(t\ge1\), the following inequality holds almost surely:
\beq
f(\X_{t+1})-f(\X_t) + c_0\bar\gamma_t\|\V_t\|_{\fro}^2
\le c_1\bar\gamma_t\|\S_t\|_{\fro}^2 + c_2\bar\gamma_t^2\|\V_t\|_{\fro}^2,
\eeq
\noi where $\bar\gamma_t=\gamma_t/\alpha_t$.
\end{lemma}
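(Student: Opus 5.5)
The approach is the standard descent lemma under average $L$-smoothness (Assumption~\ref{ass:f}(a)), applied to the rewritten update $\X_{t+1}-\X_t=-\theta\bar\gamma_t\|\V_t\|_{\fro}\Orth(\V_t)$. Write $\D_t:=\X_{t+1}-\X_t$. Smoothness gives
\[
f(\X_{t+1})-f(\X_t)\le \langle \nabla f(\X_t),\D_t\rangle+\tfrac{L}{2}\|\D_t\|_{\fro}^2 .
\]
For the quadratic term, property (P2) gives $\|\Orth(\V_t)\|_{\fro}^2=n$. Hence $\tfrac L2\|\D_t\|_{\fro}^2=\tfrac{L n\theta^2}{2}\bar\gamma_t^2\|\V_t\|_{\fro}^2=c_2\bar\gamma_t^2\|\V_t\|_{\fro}^2$, which is exactly the last term of the claim.

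For the linear term, split $\nabla f(\X_t)=\V_t-\S_t$:
\[
\langle \nabla f(\X_t),\D_t\rangle=-\theta\bar\gamma_t\|\V_t\|_{\fro}\langle \V_t,\Orth(\V_t)\rangle+\theta\bar\gamma_t\|\V_t\|_{\fro}\langle \S_t,\Orth(\V_t)\rangle .
\]
By (P3), $\langle \V_t,\Orth(\V_t)\rangle=\|\V_t\|_*\ge\|\V_t\|_{\fro}$, so the first piece is at most $-\theta\bar\gamma_t\|\V_t\|_{\fro}^2$. For the cross term, Cauchy--Schwarz together with (P2) gives $\langle \S_t,\Orth(\V_t)\rangle\le\sqrt n\,\|\S_t\|_{\fro}$. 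Young's inequality then yields
\[
\theta\bar\gamma_t\sqrt n\,\|\V_t\|_{\fro}\|\S_t\|_{\fro}\le \tfrac{\theta}{2}\bar\gamma_t\|\V_t\|_{\fro}^2+\tfrac{\theta n}{2}\bar\gamma_t\|\S_t\|_{\fro}^2 .
\]
Half of the $-\theta\bar\gamma_t\|\V_t\|_{\fro}^2$ is absorbed, leaving $-c_0\bar\gamma_t\|\V_t\|_{\fro}^2+c_1\bar\gamma_t\|\S_t\|_{\fro}^2$. Combining with the quadratic term gives the stated inequality. It holds pathwise, hence almost surely.

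There is no real obstacle; only bookkeeping needs care. First, check that $\Orth(\alpha_t\M_t)=\Orth(\M_t)$ and $\bar\gamma_t\|\V_t\|_{\fro}=\gamma_t\|\M_t\|_{\fro}$, so the rewritten update matches \eqref{eq:OptMuon:update}; this uses $\alpha_t>0$, which holds because $\rho_t>0$. Second, if $\V_t$ is rank-deficient or zero, (P1)--(P3) still hold for the chosen completion, and the zero case is trivial because then $\D_t=\zero$.
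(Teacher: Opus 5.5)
Your proof is correct and follows the paper's argument: the descent lemma applied to the rewritten update, the split $\nabla f(\X_t)=\V_t-\S_t$, the alignment property (P3), and Young's inequality. The only cosmetic difference is in the cross term: you bound it by Frobenius Cauchy--Schwarz, $\langle \S_t,\Orth(\V_t)\rangle\le\sqrt n\,\|\S_t\|_{\fro}$, whereas the paper uses $\langle \Orth(\V_t),\S_t\rangle\le\|\S_t\|_*$ followed by $\|\S_t\|_*\le\sqrt n\,\|\S_t\|_{\fro}$, and both give the same constant $c_1$.
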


\begin{lemma} \label{lemma:bcd:MuonA}
(Proof in Appendix \ref{app:lemma:bcd:MuonA}) Define $\bar\gamma_t:=\gamma_t/\alpha_t$. We have:
\begin{enumerate}[label=\textbf{(\alph*)},leftmargin=18pt,itemsep=2pt,topsep=1pt,parsep=0pt,partopsep=0pt]

\item $\ts\EEE [\sum_{t=1}^T B_t]\le C_b \ln(e+T)$, where $C_b:=  \big(52 \varpi^5 + 9 \delta \ln (e + \varpi \kappa) \big)\cdot \ln(e+\hG^2)$.

\item $\ts \EEE [\sum_{t=1}^T \bar\gamma_t\|\S_t\|_{\fro}^2]\le \EEE [\sum_{t=1}^T \alpha_t\|\S_t\|_{\fro}^2]\le C_c \ln(e+T)$, where $C_c:= 2C_b$.

\item $\ts \sum_{t=1}^T \bar{\gamma}_t^2\|\V_t\|_{\fro}^2\le C_d \ln(e+T)$ almost surely, where $C_d:=3\kappa^3 \ln(e+\kappa) \ln(e+\hG^2)$.


\end{enumerate}
\end{lemma}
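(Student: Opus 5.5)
For part (a) I will bound each of the four pieces of $B_t$ from Lemma~\ref{lemma:VR:bound:A} in expectation, one at a time.

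\emph{Cross term.} $\alpha_t$ is $\FF_{t-1}$-measurable and so is $\S_{t-1}$, while $\EEE_t[\epsilons_t]=\zero$ by Assumption~\ref{ass:sigma}. Hence the cross term $2\alpha_t(1-\alpha_t)(1+\alpha_t)\la\S_{t-1},\epsilons_t\ra$ has zero expectation. This is exactly where the lag $\alpha_t=\rho_{t-1}$ is needed.

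\emph{Noise term.} Since $\alpha_t\le 1$, I use $\alpha_t^2\|\epsilons_t\|_{\fro}^2\le 4\hG^2\alpha_t^2$. I then bound $\alpha_t^2=\rho_{t-1}^2\le 2\rho_t^2$ by Lemma~\ref{lemma:prop:para}(a) with $q=1/2$. Next, $\rho_t^2=(1+\NNN_t)/(1+\GGG_t)\le \lambda/(1+\GGG_t)$.

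To sum this, I cannot simply write $1/(1+\GGG_t)$ and use a logarithm, because $g_t$ may be small. Instead I use the inequality $\frac{\lambda}{1+\GGG_t}\le \frac{\lambda(1+\hG^2)}{\ldots}$ and deal with it as follows:
\[
\frac{\|\epsilons_t\|^2}{1+\GGG_t}
\quad\text{together with}\quad
\|\epsilons_t\|^2\le 2g_t^2+2\hG^2 .
\]
The $g_t^2$ part is summed by the classical bound $\sum_t g_t^2/(1+\GGG_t)\le \ln(1+\GGG_T)\le \ln(1+T\hG^2)$. For the leftover $\hG^2$ part I fall back on the coarse estimate $\alpha_t^2\le\ldots$, which only needs to be $O(\ln T)$ overall. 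I expect this bookkeeping to be the main obstacle. If it fails, I will replace $\alpha_t^2\|\epsilons_t\|^2$ by $\alpha_t^2\cdot 2(g_t^2+\|\nabla f\|^2)$ and control the gradient part through $\EEE\|\nabla f(\X_t)\|^2\le \EEE g_t^2$ conditionally.

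\emph{Coefficient-variation term.} Lemma~\ref{lemma:prop:para}(b) gives directly
\[
\sum_t|\alpha_{t-1}/\alpha_t-1|\le 4\ln(1+\GGG_T)\le 4\ln(1+T\hG^2),
\]
and this is $\lesssim \ln(e+T)\ln(e+\hG^2)$.

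\emph{Drift term.} I write $\gamma_{t-1}^2m_{t-1}^2/\alpha_{t-1}=\bar\gamma_{t-1}^2\|\V_{t-1}\|^2/\alpha_{t-1}$. Then I use $\bar\gamma\le\beta$ and $\bar\gamma\le\alpha$ to get $\bar\gamma^2/\alpha\le\beta^2/\alpha$. Since $\beta_t^2=(1+\sum_j\|\V_j\|^2/\alpha_j)^{-1}$, the sum $\sum_t \frac{\|\V_t\|^2/\alpha_t}{1+\sum_{j\le t}\|\V_j\|^2/\alpha_j}$ is at most $\ln(1+\sum_j\|\V_j\|^2/\alpha_j)$ by the same log-sum lemma. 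The argument of this logarithm is bounded polynomially in $T$ using $1/\alpha_j\le\sqrt{1+T\hG^2}$ together with Lemma~\ref{lemma:OptMuonA:sumV:byG}. This yields the $\delta\ln(e+\varpi\kappa)\ln(e+T)$ piece.

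Adding the four pieces with constants gives $C_b$.

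For part (b), $\bar\gamma_t\le\alpha_t$ gives the first inequality. For the second, I rearrange \eqref{eq:recursive:A} as $\alpha_t\|\S_{t-1}\|^2\le\|\S_{t-1}\|^2-\|\S_t\|^2+B_t$ and telescope. This gives $\sum_t\alpha_t\|\S_{t-1}\|^2\le\|\S_0\|^2+\sum_t B_t$.

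I then need to shift the index to $\alpha_t\|\S_t\|^2$. I use $\alpha_{t+1}\ge\alpha_t/2^{1/2}$, and the Lemma~\ref{lemma:prop:para}(a) ratio bound in the correct direction. Where that direction fails, I control the discrepancy $|\alpha_{t}-\alpha_{t+1}|\,\|\S_t\|^2\le 4\varpi^4|\cdot|$ by the logarithmic variation in (b). These extra terms are absorbed into the slack in $C_c=2C_b$.

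For part (c), I bound pathwise $\bar\gamma_t^2\|\V_t\|^2\le \alpha_t\beta_t^2\|\V_t\|^2/\alpha_t\cdot\alpha_t$, which is at most $\beta_t^2\|\V_t\|^2/\alpha_t$ since $\alpha_t\le1$. Summing with the log-sum lemma gives $\ln(1+\sum_j\|\V_j\|^2/\alpha_j)$. Finally, I bound $\sum_j\|\V_j\|^2/\alpha_j$ by $\varpi\kappa\,\GGG_T\cdot\ldots$ using Lemma~\ref{lemma:OptMuonA:Prel:Bound} ratios, and take logarithms to reach $C_d\ln(e+T)$.
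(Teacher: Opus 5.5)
Your proof is correct once you commit to your fallback for the noise term in (a). It takes a different route from the paper in three places.

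\textbf{Noise term in (a).} Your primary route for $\EEE\sum_t\alpha_t^2\|\epsilons_t\|_{\fro}^2$ does fail. After splitting $\|\epsilons_t\|_{\fro}^2\le 2g_t^2+2\hG^2$, the leftover $2\hG^2\sum_t\alpha_t^2$ is not $O(\ln T)$. For instance, if $\sigma=0$ and $\X_1$ is stationary, then $g_t\equiv 0$, $\alpha_t\equiv 1$, and the leftover equals $2\hG^2T$.

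The fallback works, and it works precisely because of the lag. Since $\alpha_t$ and $\X_t$ are $\FF_{t-1}$-measurable, $\EEE[\alpha_t^2\|\epsilons_t\|_{\fro}^2]=\EEE[\alpha_t^2\,\EEE_t\|\epsilons_t\|_{\fro}^2]\le\EEE[\alpha_t^2 g_t^2]$. Only afterwards do you pass pathwise to $\alpha_t^2\le 2\rho_t^2\le 2(1+\hG^2)/(1+\GGG_t)$ and apply the log-sum bound, which gives $\sum_t\alpha_t^2 g_t^2\le 2(1+\hG^2)\ln(1+\GGG_T)$.

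The paper does these steps in the opposite order. It first bounds $\alpha_t^2\le\frac94\alpha_{t+1}^2\le\frac94(1+\hG^2)/(1+\GGG_t)$, which destroys predictability. It therefore needs the weighted self-normalized comparison of Lemma~\ref{lemma:weighted:self:normalized}, a conditional Cauchy--Schwarz argument, to trade $\|\epsilons_t\|_{\fro}^2/(1+\GGG_t)$ for $8(1+\hG^2)g_t^2/(1+\GGG_t)$ in expectation. Your ordering is more elementary and gives a smaller constant. The paper's ordering packages the comparison as a standalone lemma that it reuses later. With the fallback, your four pieces fit inside the stated $C_b$.

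\textbf{Part (b).} The paper sums $\alpha_t s_t^2\le(1-\alpha_t)(s_{t-1}^2-s_t^2)+B_t$ by parts. It pays for the non-monotone coefficient with the one-sided variation bound of Lemma~\ref{lemma:prop:para}(b) together with $s_t^2\le 4\varpi^4$. You instead telescope $\alpha_t s_{t-1}^2\le s_{t-1}^2-s_t^2+B_t$ and then shift indices using $\alpha_t\le\sqrt2\,\alpha_{t+1}$ from Lemma~\ref{lemma:prop:para}(a). That shift never fails, so your hedge about the ``wrong direction'' is unnecessary. Keep the $-s_T^2$ from the telescope to cover $\alpha_T s_T^2$, and use $\alpha_1=1$ to cancel $s_0^2$. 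You then get pathwise $\sum_{t=1}^T\alpha_t s_t^2\le\sqrt2\sum_{t=1}^T B_t$, hence $C_c=\sqrt2\,C_b$, with no variation lemma needed.

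\textbf{Part (c).} The paper uses $\bar\gamma_t\le\alpha_t$, $\alpha_t^2\le\frac94(1+\hG^2)/(1+\GGG_t)$ and $\VVV_t\le\kappa\GGG_t$. You reuse the $\beta$-normalized log-sum from the drift term; both routes fit within $C_d$. Your intermediate expression $\alpha_t\beta_t^2\|\V_t\|_{\fro}^2$ is not an upper bound for $\bar\gamma_t^2\|\V_t\|_{\fro}^2$ when $\beta_t<\alpha_t<1$. What you actually need follows directly from $\bar\gamma_t\le\beta_t$ and $\alpha_t\le 1$, namely $\bar\gamma_t^2\le\beta_t^2\le\beta_t^2/\alpha_t$.
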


\begin{lemma} \label{lemma:U:square:MuonA}
 (Proof in Appendix \ref{app:lemma:U:square:MuonA})    Define $\UUU_T:=\sum_{t=1}^T \bar\gamma_t\|\S_t\|_{\fro}^2$. We have $\EEE[\UUU_T^2] \leq C_u \ln^2(e+T)$, where $C_u:=\big(36\varpi^4\ln(e+\hG^2)+2C_e\big)C_c$, and $C_e:=\big( 88 \varpi^5 + 6 \delta \big) \cdot \ln(e+\hG^2)$.
\end{lemma}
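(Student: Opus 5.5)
Since $\bar\gamma_t\le\alpha_t$, we have $\UUU_T\le\sum_{t=1}^T\alpha_t\|\S_t\|_{\fro}^2=:\WWW_T$, so it suffices to bound $\EEE[\WWW_T^2]$. The plan is to bound $\WWW_T$ pathwise by a quantity whose second moment is controlled through Lemma~\ref{lemma:bcd:MuonA}(b) and a martingale argument. Summing the recursion of Lemma~\ref{lemma:VR:bound:A} (telescoping $\|\S_t\|^2-\|\S_{t-1}\|^2\le -\alpha_t\|\S_{t-1}\|^2+B_t$) gives
\[
\ts\sum_{t=1}^T\alpha_t\|\S_{t-1}\|_{\fro}^2\le \|\S_0\|_{\fro}^2+\sum_{t=1}^T B_t .
\]
To pass from $\alpha_t\|\S_{t-1}\|^2$ to $\alpha_{t-1}\|\S_{t-1}\|^2$, I would use $\alpha_{t-1}\le 2^{q}\alpha_t$ from Lemma~\ref{lemma:prop:para}(a). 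The endpoint term is handled by $\|\S_T\|^2\le 4\varpi^4$ from Lemma~\ref{lemma:OptMuonA:Prel:Bound}. This yields, almost surely,
\[
\ts\WWW_T\le c\,\varpi^4+2\sum_{t=1}^T B_t .
\]

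Next I would split $B_t=D_t+N_t$. Here $N_t:=2\alpha_t(1-\alpha_t)(1+\alpha_t)\langle\S_{t-1},\epsilons_t\rangle$ is a martingale difference, since $\alpha_t$ and $\S_{t-1}$ are $\FF_{t-1}$-measurable and $\EEE_t\epsilons_t=0$. The remainder $D_t$ is nonnegative and consists of the three other terms. Then
\[
\EEE[\WWW_T^2]\le 3c^2\varpi^8+12\,\EEE\big[(\ts\sum_t D_t)^2\big]+12\,\EEE\big[(\ts\sum_t N_t)^2\big].
\]
For the martingale part, orthogonality of increments gives
\[
\EEE\big[(\ts\sum N_t)^2\big]=\ts\sum\EEE N_t^2\le 4\cdot 4\hG^2\,\EEE\big[\ts\sum\alpha_t^2\|\S_{t-1}\|^2\big],
\]
which by $\alpha_t\le 1$ and the shift above is at most a multiple of $\varpi^4\,\EEE[\WWW_T]$. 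Lemma~\ref{lemma:bcd:MuonA}(b) then bounds it by a multiple of $\varpi^4 C_c\ln(e+T)$. This is the source of the $36\varpi^4\ln(e+\hG^2)C_c$ piece of $C_u$.

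For the nonnegative part I would use $(\sum D_t)^2\le(\max\text{-type bound})\cdot\sum D_t$, that is, show $\sum_t D_t\le C_e\ln(e+T)$ almost surely and then $\EEE[(\sum D_t)^2]\le C_e\ln(e+T)\,\EEE[\sum D_t]$. The pathwise bound would be proved term by term:
\begin{itemize}
\item The term $4\varpi^5\sum|\alpha_{t-1}/\alpha_t-1|$ is at most $16\varpi^5\ln(1+\GGG_T)\le 16\varpi^5\ln(1+T\hG^2)$ by Lemma~\ref{lemma:prop:para}(b).
\item For the term $\sum 2\alpha_t^2\|\epsilons_t\|^2\le 8\hG^2\sum\alpha_t^2$, with $q=1/2$ one has $\alpha_{t+1}^2=\rho_t\le(1+\hG^2)/(1+\GGG_t)$ type bounds. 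To control $\sum\rho_t$ when gradients are small, I would instead bound $\alpha_t^2\|\epsilons_t\|^2$ via $\|\epsilons_t\|^2\le 2g_t^2+2\|\nabla f\|^2$. The $g_t^2$ part then gives $\sum g_t^2/(1+\GGG_{t-1})\lesssim\varpi^2\ln(1+\GGG_T)$ by the standard AdaGrad sum lemma.
\item The $\delta$ term reduces via $\gamma_{t-1}^2 m_{t-1}^2/\alpha_{t-1}=\alpha_{t-1}\bar\gamma_{t-1}^2\|\V_{t-1}\|^2/\alpha_{t-1}^2\cdot\alpha_{t-1}$ to Lemma~\ref{lemma:bcd:MuonA}(c)-style quantities.
\end{itemize}
Combined with $\EEE[\sum B_t]$ from Lemma~\ref{lemma:bcd:MuonA}(a), this yields the $2C_eC_c$ piece.

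The main obstacle I expect is the $\|\nabla f(\X_t)\|^2$ part of $\sum\alpha_t^2\|\epsilons_t\|^2$, which is not pathwise logarithmic. If that fails, I would keep this piece inside the expectation: write $\EEE[(\sum D_t)^2]\le \EEE[\sum D_t\cdot \sum D_t]$ and bound one factor pathwise only by the logarithmic terms. The non-log piece $8\hG^2\sum\alpha_t^2\|\nabla f\|^2$ would be handled using $\|\nabla f(\X_t)\|^2\le2\|\V_t\|^2+2\|\S_t\|^2$, with Lemma~\ref{lemma:OptMuonA:sumV:byG} for the $\V$ part and absorption into $\WWW_T$ for the $\S$ part. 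Absorption requires the coefficient to be small, which may force a Young-inequality step $\EEE[\WWW_T\cdot X]\le\frac12\EEE\WWW_T^2+\frac12\EEE X^2$ and rearrangement. This is legitimate because $\WWW_T$ is a.s. bounded by $4\varpi^4T$.
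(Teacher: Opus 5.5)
Your reduction is fine up to the nonnegative part. The pathwise bound of your $\WWW_T$ by $O(\varpi^4)+\sqrt2\sum_t B_t$ from Lemma~\ref{lemma:VR:bound:A} holds. So does the orthogonality argument for $\sum_t N_t$, and so do the pathwise logarithmic bounds on the $|\alpha_{t-1}/\alpha_t-1|$ and $\delta$ components of $D_t$.

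\textbf{The gap.} It is the second moment of the noise component $\sum_t 2\alpha_t^2\|\epsilons_t\|_{\fro}^2$ of $\sum_t D_t$, and neither of your routes handles it. As you suspect, it is not pathwise logarithmic: on paths where the sampled $g_t$ stay tiny while $\|\nabla f(\X_t)\|_{\fro}$ does not, $\alpha_t\approx 1$ and the sum grows linearly in $T$. Your fallback does not repair this. After $\|\nabla f(\X_t)\|_{\fro}^2\le 2\|\V_t\|_{\fro}^2+2\|\S_t\|_{\fro}^2$, the $\S$-part $\sum_t\alpha_t^2\|\S_t\|_{\fro}^2\le\WWW_T$ re-enters the bound for $\WWW_T$ with a constant well above one. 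Since $\alpha_t$ has no smallness (it can equal $1$), you would be bounding $\EEE[\WWW_T^2]$ by a multiple larger than one of itself, and Young's inequality only moves this circularity around. This failure is structural. On the same bad paths $\V_t\approx\zero$, so $\WWW_T$ itself is of order $T$. Any argument that treats the $\nabla f$-part of $\epsilons_t$ pathwise must therefore fail. The term is small only because of the conditional relation $\EEE_t[g_t^2]\ge\|\nabla f(\X_t)\|_{\fro}^2$, i.e.\ self-normalization in expectation.

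\textbf{The missing tool.} You need Lemma~\ref{lemma:weighted:self:normalized} applied with a predictable, nondecreasing weight, which gives second-moment rather than first-moment control. Within your decomposition this closes as follows.
\begin{enumerate}
\item Write $(\sum_t D_t)^2=\sum_t D_t^2+2\sum_t W_{t-1}D_t$ with $W_{t-1}:=\sum_{i<t}D_i$. This weight is $\FF_{t-1}$-measurable and nondecreasing.
\item Since each $D_t$ is a.s.\ bounded, $\EEE\sum_t D_t^2\lesssim\EEE\sum_t D_t$.
\item In the cross term, the two pathwise-logarithmic components contribute at most $W_T\cdot O(\ln(e+T))$.
\item For the noise component, use $\alpha_t^2\le 2\alpha_{t+1}^2\le 2(1+\hG^2)/(1+\GGG_t)$ and the weighted lemma. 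This gives $O(\ln(e+T))\,\EEE[W_T]$.
\item Finally, $\EEE[W_T]=\EEE[\sum_t B_t]\le C_b\ln(e+T)$ by Lemma~\ref{lemma:bcd:MuonA}(a).
\end{enumerate}

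\textbf{Comparison with the paper.} The paper uses the same mechanism one level up. It expands $\UUU_T^2=\sum_t w_t^2+2\sum_t\UUU_{t-1}w_t$ and multiplies the recursion by the predictable weight $\UUU_{t-1}$. With that weight the martingale term vanishes in expectation without an orthogonality argument, and the coefficient variation is handled by summation by parts. It then applies the weighted lemma with $W_{t-1}=\UUU_{t-1}$, together with $\EEE[\UUU_T]\le C_c\ln(e+T)$. Your repaired route would give the $\ln^2(e+T)$ order, but not the exact stated $C_u$.
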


\begin{lemma}\label{R_R}
 (Proof in Appendix \ref{app:R_R}) Let $F_1:=f(\X_1)-f_*$ and $\VVV_T:=\sum_{t=1}^T\|\V_t\|_{\fro}^2$. We have $\EEE[\alpha_T \VVV_T]\le C_v \ln^2(e+T)$, where $C_v:=4\varpi^4 ( C_w + C_x)$, $C_w:=\tfrac{1}{c_0} (F_1 + c_1 C_c + c_2 C_d)$, and $C_x:=\tfrac{3}{c_0^2} \left( F_1^2 + c_1^2 C_u + c_2^2 C_d^2\right)$.
\end{lemma}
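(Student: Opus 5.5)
The plan is to telescope the one-step descent inequality of Lemma~\ref{lemma:suff:descent:A}, lower bound the resulting weighted sum $\sum_t\bar\gamma_t\|\V_t\|_{\fro}^2$ by a function of $\alpha_T\VVV_T$, and then take expectations. For the expectations we use the first and second moment bounds already established in Lemmas~\ref{lemma:bcd:MuonA} and~\ref{lemma:U:square:MuonA}.

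\textbf{Step 1 (telescoping).} Sum Lemma~\ref{lemma:suff:descent:A} over $t=1,\dots,T$ and use $f(\X_{T+1})\ge f_*$. Then bound $\sum_t\bar\gamma_t^2\|\V_t\|_{\fro}^2\le C_d\ln(e+T)$ almost surely by Lemma~\ref{lemma:bcd:MuonA}(c). This gives, pathwise,
\[
c_0\sum_{t=1}^T\bar\gamma_t\|\V_t\|_{\fro}^2\le R_T:=F_1+c_1\UUU_T+c_2C_d\ln(e+T).
\]

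\textbf{Step 2 (pathwise lower bound).} By Lemma~\ref{lemma:OptMuonA:Prel:Bound}, $\bar\gamma_t\ge\bar\gamma_T/\varpi$. Hence the left side is at least $\bar\gamma_T\VVV_T/\varpi$, where $\bar\gamma_T=\min\{\alpha_T,\beta_T\}$. We handle the two cases separately.
\begin{itemize}
\item If $\bar\gamma_T=\alpha_T$, then directly $\alpha_T\VVV_T\le\varpi R_T/c_0$.
\item If $\bar\gamma_T=\beta_T$, we lower bound $\beta_T$. Since $\alpha_j\ge\alpha_T/\varpi$ (again Lemma~\ref{lemma:OptMuonA:Prel:Bound}), we have $\sum_j\|\V_j\|_{\fro}^2/\alpha_j\le\varpi\VVV_T/\alpha_T$. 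Therefore $\beta_T\ge(1+\varpi\VVV_T/\alpha_T)^{-1/2}$.
\end{itemize}
In the second case, set $y:=\alpha_T\VVV_T$ and $a:=\varpi R_T/c_0$. Multiplying by $\alpha_T\le1$ yields
\[
y\le a\sqrt{\alpha_T^2+\varpi\alpha_T\VVV_T}\le a\bigl(1+\sqrt{\varpi y}\bigr).
\]
Solving this quadratic-type inequality in $\sqrt y$, for instance via $a\sqrt{\varpi y}\le \tfrac{y}{2}+\tfrac{\varpi a^2}{2}$, gives $y\le 2a+\varpi a^2$. In both cases, therefore,
\[
\alpha_T\VVV_T\le \frac{2\varpi}{c_0}R_T+\frac{\varpi^3}{c_0^2}R_T^2.
\]

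\textbf{Step 3 (expectations).} For the first moment, Lemma~\ref{lemma:bcd:MuonA}(b) gives $\EEE[\UUU_T]\le C_c\ln(e+T)$. Together with $\ln(e+T)\ge1$, this yields $\EEE[R_T]\le(F_1+c_1C_c+c_2C_d)\ln(e+T)=c_0C_w\ln(e+T)$. For the second moment, use $(x+y+z)^2\le3(x^2+y^2+z^2)$ and Lemma~\ref{lemma:U:square:MuonA}. This yields
\[
\EEE[R_T^2]\le3\bigl(F_1^2+c_1^2C_u+c_2^2C_d^2\bigr)\ln^2(e+T)=c_0^2C_x\ln^2(e+T).
\]
Combining, $\EEE[\alpha_T\VVV_T]\le 2\varpi C_w\ln(e+T)+\varpi^3C_x\ln^2(e+T)$. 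Since $\varpi\ge1$ and $\ln(e+T)\ge1$, this is at most $4\varpi^4(C_w+C_x)\ln^2(e+T)$, which is the claim.

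\textbf{Main obstacle.} The delicate step is Step 2. The stepsize $\beta_T$ depends on $\VVV_T$ itself, so the bound is self-referential. This forces the quadratic dependence on $R_T$, and hence the need for the second-moment estimate of $\UUU_T$ from Lemma~\ref{lemma:U:square:MuonA}. The key point that makes it work is the ratio control $\alpha_T/\alpha_j\le\varpi$, which converts the $1/\alpha_j$ weights in $\beta_T$ into the single factor $1/\alpha_T$.
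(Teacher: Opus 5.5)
Your proof is correct and follows essentially the same route as the paper. Both telescope Lemma~\ref{lemma:suff:descent:A}, use $\bar\gamma_T\le\varpi\bar\gamma_t$ and $\alpha_T\le\varpi\alpha_j$ to turn the bound on $\sum_t\bar\gamma_t\|\V_t\|_{\fro}^2$ into a self-referential inequality in $\alpha_T\VVV_T$, and close it with the moment bounds of Lemmas~\ref{lemma:bcd:MuonA}(b),(c) and~\ref{lemma:U:square:MuonA}. The differences are cosmetic: the paper bounds $1/\bar\gamma_T$ by the sum $1/\alpha_T+1/\beta_T$ instead of splitting cases. Your AM--GM step also gives the slightly sharper intermediate bound $2\varpi C_w\ln(e+T)+\varpi^3C_x\ln^2(e+T)$ before you relax it to the stated constant.
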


We now state the convergence guarantee for {OptMuon-A}.

\begin{theorem}\label{the:it:MuonA}
(Proof in Appendix \ref{app:the:it:MuonA})
Under Assumptions \ref{ass:F}, \ref{ass:sigma}, and \ref{ass:BG} and the average smoothness condition in Assumption \ref{ass:f}, for every $T\ge1$, define $C_z:=\sqrt{2C_v}+\sqrt{2\varpi C_c}$, $C_{A,1}:=C_z\sqrt{2+8C_z^2}$, $C_{A,2}:=2^{3/4}C_z$. Then {OptMuon-A} satisfies the explicit bound
\beq
\EEE \!\left[\frac{1}{T}\sum_{t=1}^T \|\nabla f(\X_t)\|_{\fro} \right]
\le
C_{A,1}\frac{\ln^2(e+T)}{T^{1/2}}
+
C_{A,2}\frac{\sigma^{1/2}\ln(e+T)}{T^{1/4}} .
\eeq
Consequently, if \(R\sim {\rm Unif}\{1,\ldots,T\}\) is independent of the algorithmic randomness and \(\X_{\rm out}:=\X_R\), then the same bound holds for
\(\EEE[\|\nabla f(\X_{\rm out})\|_{\fro}]\).
\end{theorem}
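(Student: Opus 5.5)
The plan is to bound $\sum_{t}\|\nabla f(\X_t)\|_{\fro}$ pathwise by quantities the preceding lemmas already control in expectation, and then close a self-bounding inequality in the accumulated gradient energy $\GGG_T$. For the pathwise step I write $\nabla f(\X_t)=\V_t-\S_t$, so that
\[
\textstyle\sum_{t=1}^T\|\nabla f(\X_t)\|_{\fro}\le \sum_{t=1}^T\|\V_t\|_{\fro}+\sum_{t=1}^T\|\S_t\|_{\fro}.
\]

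\textbf{The $\V$-sum.} By Cauchy--Schwarz,
\[
\textstyle\sum_t\|\V_t\|_{\fro}\le\sqrt{T\,\VVV_T}=\sqrt{T\,\alpha_T\VVV_T}\,\alpha_T^{-1/2}.
\]
The coefficient satisfies $\alpha_T=\rho_{T-1}\ge(1+\GGG_{T})^{-1/2}$, since the numerator of $\rho$ is at least $1$ and $q=1/2$. Hence $\alpha_T^{-1/2}\le(1+\GGG_T)^{1/4}$.

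\textbf{The $\S$-sum.} Again by Cauchy--Schwarz,
\[
\textstyle\sum_t\|\S_t\|_{\fro}\le\sqrt{\sum_t\alpha_t\|\S_t\|_{\fro}^2}\,\sqrt{\sum_t\alpha_t^{-1}}.
\]
Lemma~\ref{lemma:OptMuonA:Prel:Bound} gives $1/\alpha_t\le\varpi/\alpha_T$, so $\sum_t\alpha_t^{-1}\le T\varpi(1+\GGG_T)^{1/2}$.

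\textbf{Taking expectations.} Both terms now have the form $\sqrt{T}\,\sqrt{Z}\,(1+\GGG_T)^{1/4}$, with $Z=\alpha_T\VVV_T$ or $Z=\varpi\sum_t\alpha_t\|\S_t\|^2_{\fro}$. I apply Cauchy--Schwarz in expectation:
\[
\EEE[\sqrt{Z}(1+\GGG_T)^{1/4}]\le\sqrt{\EEE Z}\,\big(\EEE\sqrt{1+\GGG_T}\big)^{1/2}\le\sqrt{\EEE Z}\,(1+\EEE\GGG_T)^{1/4},
\]
where the last step is Jensen. Lemma~\ref{R_R} gives $\EEE Z\le C_v\ln^2(e+T)$, and Lemma~\ref{lemma:bcd:MuonA}(b) gives $\EEE Z\le\varpi C_c\ln(e+T)$. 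Setting $A:=\EEE[\frac1T\sum_t\|\nabla f(\X_t)\|_{\fro}]$, this yields
\[
TA\le C_z\ln(e+T)\sqrt{T}\,(1+\EEE\GGG_T)^{1/4}
\]
up to the factor-$2$ splitting absorbed in $C_z=\sqrt{2C_v}+\sqrt{2\varpi C_c}$. The extra square root of the log from $C_v$ is what later produces the $\ln^2$ in the first term.

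\textbf{Closing the loop.} I decompose $\G_t=\nabla f(\X_t)+\epsilons_t$. Using unbiasedness, the cross term has zero conditional mean, so
\[
\EEE\GGG_T=\textstyle\sum_t\EEE\|\nabla f(\X_t)\|_{\fro}^2+\sum_t\EEE\|\epsilons_t\|^2_{\fro}\le\sum_t\EEE\|\nabla f(\X_t)\|_{\fro}^2+T\sigma^2.
\]
The difficulty is that the right side involves squared gradient norms, while $A$ controls only first moments. My first attempt is to run the whole argument for $\EEE[\frac1T\sum_t\|\nabla f(\X_t)\|_{\fro}^2]$ instead of $A$. Squaring $\|\nabla f\|\le\|\V\|+\|\S\|$ gives $\sum_t\|\nabla f(\X_t)\|^2_{\fro}\le2\VVV_T+2\sum_t\|\S_t\|^2_{\fro}$, and the same $\alpha_T^{-1}\le(1+\GGG_T)^{1/2}$ trick gives $\sum_t\|\nabla f(\X_t)\|^2_{\fro}\lesssim(1+\GGG_T)^{1/2}\,Z$. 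Cauchy--Schwarz in expectation then needs $\EEE[Z^2]$, which is exactly why Lemma~\ref{lemma:U:square:MuonA} (the bound $\EEE[\UUU_T^2]$) and the squared-energy constant $C_x$ in Lemma~\ref{R_R} exist. Writing $x^2:=\EEE\sum_t\|\nabla f(\X_t)\|_{\fro}^2$, I expect an inequality of the form
\[
x^4\le C_z^2\ln^2(e+T)\,(1+x^2+T\sigma^2),
\]
up to absolute constants. Solving this quadratic in $x^2$ gives $x^2\lesssim C_z^2\ln^2+C_z^2\ln\cdot\sqrt{1+T\sigma^2}$. Substituting back into $(1+\EEE\GGG_T)^{1/4}$ and dividing by $T$ yields
\[
A\lesssim C_z\sqrt{2+8C_z^2}\,\ln^2(e+T)\,T^{-1/2}+2^{3/4}C_z\,\sigma^{1/2}\ln(e+T)\,T^{-1/4},
\]
after using $(a+b)^{1/4}\le a^{1/4}+b^{1/4}$ to split off the $T\sigma^2$ piece. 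The final claim about $\X_{\rm out}$ is then immediate, since $\EEE\|\nabla f(\X_R)\|_{\fro}=A$.

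\textbf{Main obstacle.} I expect the main difficulty to be this self-bounding step: matching the moment orders, so that $\EEE\GGG_T$ is controlled by the same quantity being bounded, and tracking exactly which squared-moment lemma supplies each constant so that they assemble into $C_{A,1}$ and $C_{A,2}$. If the second-moment route fails, the fallback is the crude bound $\|\nabla f\|^2_{\fro}\le\hG\|\nabla f\|_{\fro}$. This makes the recursion linear in $A$, giving $A^4\lesssim C_z^4\ln^4(1+\hG A+\sigma^2)/T^2$, and I would accept the resulting slightly different constants.
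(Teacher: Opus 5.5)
There is a genuine gap, and it is in the step where you close the loop. Your pathwise decomposition and Cauchy--Schwarz steps are fine. Just before you invoke Jensen you already have the paper's intermediate estimate $TA\le C_z\ln(e+T)\sqrt{T}\,\sqrt{\ZZ_T}$ with $\ZZ_T:=\EEE\sqrt{1+\GGG_T}$. Passing to $(1+\EEE\GGG_T)^{1/4}$ creates the moment mismatch you flag, and neither remedy fixes it.

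\textbf{The second-moment route.} It needs $\EEE[(\alpha_T\VVV_T)^2]$ and $\EEE[(\sum_t\alpha_t\|\S_t\|_{\fro}^2)^2]$, and no lemma supplies these. Lemma~\ref{R_R} bounds only the first moment of $\alpha_T\VVV_T$. Its constant $C_x$ (a bound on $\EEE[\WWW_T^2]$) is used up producing that first moment, through the pathwise inequality $\alpha_T\VVV_T\le4\varpi^4(\WWW_T+\WWW_T^2)$. Squaring that inequality would require $\EEE[\WWW_T^4]$, hence fourth moments of $\UUU_T$, which are not available. Lemma~\ref{lemma:U:square:MuonA} controls $\UUU_T=\sum_t\bar\gamma_t\|\S_t\|_{\fro}^2\le\sum_t\alpha_t\|\S_t\|_{\fro}^2$, which is the wrong direction for your $Z$.

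\textbf{The fallback.} This changes the rate, not just the constants. With $\EEE\GGG_T\le\hG TA+T\sigma^2$, your inequality is $A^4\lesssim C_z^4\ln^4(e+T)\,(1+\hG TA+T\sigma^2)/T^2$; you dropped the factor $T$ on $\hG A$ and $\sigma^2$. The $\hG$-term alone then gives only $A\lesssim (C_z^4\hG\ln^4(e+T)/T)^{1/3}$. That is a $\sigma$-independent $T^{-1/3}$ term, so the zero-noise $T^{-1/2}$ rate is lost.

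\textbf{The missing idea.} Do not apply Jensen to $\GGG_T$; close the loop at the square-root level, and track $\EEE\sqrt{\sum_t\|\nabla f(\X_t)\|_{\fro}^2}$ rather than $A$.
\begin{itemize}
\item Pathwise, $\sqrt{\sum_t\|\nabla f(\X_t)\|_{\fro}^2}\le\sqrt{2\VVV_T}+\sqrt{2\SSS_T}$.
\item Your own Cauchy--Schwarz gives $\EEE\sqrt{\VVV_T}\le\sqrt{\EEE[\alpha_T\VVV_T]\,\EEE[1/\alpha_T]}\le\sqrt{C_v}\ln(e+T)\sqrt{\ZZ_T}$ and $\EEE\sqrt{\SSS_T}\le\sqrt{\varpi C_c\ln(e+T)\ZZ_T}$. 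Hence $\EEE\sqrt{\sum_t\|\nabla f(\X_t)\|_{\fro}^2}\le C_z\ln(e+T)\sqrt{\ZZ_T}$.
\item Subadditivity of the square root gives, pathwise, $\sqrt{1+\GGG_T}\le 1+\sqrt2\sqrt{\sum_t\|\nabla f(\X_t)\|_{\fro}^2}+\sqrt2\sqrt{\sum_t\|\epsilons_t\|_{\fro}^2}$.
\item Jensen is used only on the noise: $\EEE\sqrt{\sum_t\|\epsilons_t\|_{\fro}^2}\le\sigma\sqrt{T}$.
\end{itemize}
Together these give $\ZZ_T\le 1+\sqrt2 C_z\ln(e+T)\sqrt{\ZZ_T}+\sqrt2\sigma\sqrt T$, with the same moment order on both sides. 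It closes via $Z\le2\max(a\sqrt Z,b)\Rightarrow Z\le\max(4a^2,2b)$ to $\ZZ_T\le(2+8C_z^2)\ln^2(e+T)+2\sqrt2\sigma\sqrt T$. Then $A\le T^{-1/2}C_z\ln(e+T)\sqrt{\ZZ_T}$ gives exactly $C_{A,1}$ and $C_{A,2}$, using only first-moment lemmas.

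It must be $\EEE\sqrt{\sum_t\|\nabla f(\X_t)\|_{\fro}^2}$, not $TA$, that feeds $\ZZ_T$. The bound $\sqrt{\sum_t a_t^2}\le\sum_t a_t$ costs a factor $\sqrt T$, and the resulting recursion no longer yields a decaying rate.
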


\begin{remark}
\ding{182} Theorem \ref{the:it:MuonA} gives a bound of order $\tilde{\mathcal O}(T^{-1/2}+\sigma^{1/2}T^{-1/4})$ for the averaged gradient norm. The first term is the deterministic optimization contribution and the second term is the variance-dependent contribution. When $\sigma$ is a fixed positive constant, the stochastic component matches the usual $\tilde{\mathcal O}(T^{-1/4})$ gradient-norm behavior for open-loop Muon-style stochastic analyses, whereas in the low-noise regime the bound improves continuously and becomes $\tilde{\mathcal O}(T^{-1/2})$ at $\sigma=0$. \ding{183} Relative to open-loop Muon-style stochastic analyses that yield $\tilde{\mathcal{O}}(T^{-1/4})$-type gradient-norm rates, the bound above makes the low-noise improvement explicit through the term $\sigma^{1/2}T^{-1/4}$ \citep{kim2026convergencens}. \ding{184} The additional polylogarithmic factors in our bound arise naturally from the fully adaptive and horizon-free nature of the algorithm, which requires neither prior knowledge of the total iteration budget $T$ nor the noise variance $\sigma^2$.
\end{remark}

\subsection{Analysis for OptMuon-I}
\label{sect:it:LL}

We now analyze {OptMuon-I} under the individual smoothness condition in Assumption \ref{ass:f}.  This variant uses the STORM-type momentum estimator and the exponent \(q=2/3\).

For notation convenience, we define $m_t:=\|\M_t\|_{\fro}$, $\S_t:=\M_t-\nabla f(\X_t)$, $s_t:=\|\S_t\|_{\fro}$, $\eta_t:=\theta\gamma_t m_t$, $\beta_t:= \big(1+\sum_{j=1}^t \alpha_j^{-1/2}m_j^2\big)^{-1/2}$, $\GGG_T:=\sum_{t=1}^T g_t^2$, $\MMM_T:=\sum_{t=1}^T m_t^2$, $\SSS_T:=\sum_{t=1}^T s_t^2$. Thus the Option-I scalar factor is $\gamma_t=\min\{\sqrt{\alpha_t},\beta_t\}$, and the update is $\X_{t+1}=\X_t-\theta\gamma_t m_t\Orth(\M_t)$. We use the harmless initialization convention \(\X_0=\X_1\), so that the first STORM correction is well-defined and, since \(\alpha_1=1\), \(\M_1=\G_1\).

The following lemma bounds the momentum energy controlled by stochastic gradient energy.

 \begin{lemma} [Preliminary Bounds]\label{lemma:prop:s:2}
(Proof in Appendix \ref{app:lemma:prop:s:2}) Define $\lambda:=1+\hG^2$, and $\varpi:=\lambda^{2/3}$. For all $1\leq t\leq T$, we have $\|\epsilons_t\|_{\fro}\le 2\hG$, $\|\epsilons'_t\|_{\fro}\le 2\hG$, $\tfrac{\alpha_T}{\alpha_t}\leq \varpi$ and $\tfrac{\gamma_T}{\gamma_t}\leq \sqrt{\varpi}$. Moreover, for $t\ge1$, $\tfrac{\alpha_{t-1}}{\alpha_t}\le2$ and $(1-\alpha_t)\sqrt{\alpha_{t-1}/\alpha_t}\le1-\alpha_t/3$.

\end{lemma}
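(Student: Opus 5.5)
The plan is to derive every claim directly from Assumption~\ref{ass:BG} and Lemma~\ref{lemma:prop:para} with $q=2/3$, translating through the lag $\alpha_t=\rho_{t-1}$. No probabilistic argument is needed: all bounds hold pathwise.

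\emph{Noise bounds.} Almost surely $\|\G_t\|_{\fro}\le\hG$, and by Jensen $\|\nabla f(\X_t)\|_{\fro}\le\hG$. The triangle inequality then gives $\|\epsilons_t\|_{\fro}\le 2\hG$. I read $\epsilons'_t$ as the second STORM evaluation error $\nabla f(\X_{t-1};\xi_t)-\nabla f(\X_{t-1})$, since the statement uses it without a displayed definition. It is bounded by $2\hG$ in the same way.

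\emph{Ratio of $\alpha$'s.}
\begin{itemize}
\item First, $\NNN_T=\max_t g_t^2\le\hG^2$, so $(1+\NNN_T)^{2/3}\le\lambda^{2/3}=\varpi$.
\item For $t\ge2$, $\alpha_T/\alpha_t=\rho_{T-1}/\rho_{t-1}$ with $1\le t-1\le T-1$. The last item of Lemma~\ref{lemma:prop:para}(a), applied with horizon $T-1$, bounds this by $(1+\NNN_{T-1})^{2/3}\le\varpi$.
\item For $t=1$ we have $\alpha_1=\rho_0=1$. Since the numerator of $\rho_s$ never exceeds its denominator, $\rho_s\le1$, so $\alpha_T/\alpha_1\le1\le\varpi$.
\end{itemize}

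\emph{Ratio of $\gamma$'s.} Write $\gamma_t=\min\{\sqrt{\alpha_t},\beta_t\}$. The sum inside $\beta_t$ has nonnegative terms and only grows with $t$, so $\beta_T\le\beta_t$. From the previous step, $\sqrt{\alpha_T}\le\sqrt{\varpi}\sqrt{\alpha_t}$. Hence
\[
\gamma_T\le\min\{\sqrt{\varpi}\sqrt{\alpha_t},\,\beta_t\}\le\sqrt{\varpi}\,\min\{\sqrt{\alpha_t},\beta_t\}=\sqrt{\varpi}\,\gamma_t ,
\]
where the middle inequality uses $\varpi\ge1$.

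\emph{Consecutive ratios.} Since $\alpha_0=\alpha_1=1$, the case $t=1$ is trivial for both claims: the ratio equals $1$, and the left side of the second claim is $0\le 2/3$. For $t\ge2$, set $s=t-1\ge1$, so that $\alpha_{t-1}/\alpha_t=\rho_{s-1}/\rho_s$. Lemma~\ref{lemma:prop:para}(a) gives $\rho_{s-1}/\rho_s\le 2^{2/3}\le2$. The same lemma gives
\[
(1-\rho_s)\sqrt{\rho_{s-1}/\rho_s}\le 1-(1-q)\rho_s=1-\rho_s/3 ,
\]
which is exactly $(1-\alpha_t)\sqrt{\alpha_{t-1}/\alpha_t}\le 1-\alpha_t/3$.

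The only delicate point is the index bookkeeping caused by the lag $\alpha_t=\rho_{t-1}$. One must check that Lemma~\ref{lemma:prop:para} is invoked only at indices $s\ge1$ where it applies. The boundary cases ($t=1$, or $t-1=0$) must be handled separately using $\rho_{-1}=\rho_0=1$ and $\rho_s\le 1$.
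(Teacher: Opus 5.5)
Your proof is correct and follows the paper's argument essentially step for step: pathwise noise bounds from Assumption~\ref{ass:BG}, the $\alpha$-ratio bound from Lemma~\ref{lemma:prop:para}(a) via the lag $\alpha_t=\rho_{t-1}$, the $\gamma$-ratio bound from the monotonicity of $\beta_t$, and the consecutive-ratio and contraction bounds from Lemma~\ref{lemma:prop:para}(a) with $q=2/3$. You treat the boundary indices (where $\rho_0$ or $\rho_{-1}$ appears) explicitly, which is slightly more careful than the paper, since it invokes Lemma~\ref{lemma:prop:para} at those indices without comment; this does not change the route.
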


\begin{lemma}\label{lemma:OptMuonI:sumV:byG} (Proof in Appendix \ref{app:lemma:OptMuonI:sumV:byG}) Define $\kappa:= 2\max\big(12 , 96  \delta \varpi^{3/2}   \ln(e +  36  \delta \varpi^{3/2} \lambda ) \big)$ and $\delta:=nL^2\theta^2$. For all \(T\ge1\), $\sum_{t=1}^T\|\M_t\|_{\fro}^2 \le \kappa \ln(e+T)\big(1+\sum_{t=1}^T\|\nabla f(\X_t;\xi_t)\|_{\fro}^2\big)$ a.s..
\end{lemma}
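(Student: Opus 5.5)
Write $\Delta_t:=\nabla f(\X_t;\xi_t)-\nabla f(\X_{t-1};\xi_t)$. The STORM recursion then reads
\[
\M_t=(1-\alpha_t)\M_{t-1}+\G_t-(1-\alpha_t)\nabla f(\X_{t-1};\xi_t)=\alpha_t\G_t+(1-\alpha_t)(\M_{t-1}+\Delta_t).
\]
The plan is to get a pathwise recursion for $m_t^2$ and sum it. We do not compare with $\nabla f$; everything is deterministic given the trajectory.

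\textbf{Step 1: one-step recursion.} By convexity of $\|\cdot\|_{\fro}^2$ (or Young's inequality with a weight tied to $\alpha_t$),
\[
m_t^2\le (1-\alpha_t)\|\M_{t-1}+\Delta_t\|_{\fro}^2+\alpha_t g_t^2 .
\]
Expanding with Young at parameter $\alpha_t/2$ gives
\[
m_t^2\le (1-\alpha_t/2)\,m_{t-1}^2+\tfrac{2}{\alpha_t}\|\Delta_t\|_{\fro}^2+\alpha_t g_t^2 .
\]
Individual smoothness and (P2) give
\[
\|\Delta_t\|_{\fro}^2\le L^2\|\X_t-\X_{t-1}\|_{\fro}^2=L^2\theta^2 n\,\gamma_{t-1}^2m_{t-1}^2=\delta\,\gamma_{t-1}^2m_{t-1}^2 .
\]

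\textbf{Step 2: unroll with weights.} Since the contraction factor $1-\alpha_t/2$ varies with $t$, divide by $\alpha_t$, or equivalently sum against the geometric weights. The cleanest device is to add $\tfrac{2}{\alpha_{t}}m_t^2$-type potentials and use Lemma~\ref{lemma:prop:s:2}, namely $\alpha_{t-1}/\alpha_t\le 2$ and $(1-\alpha_t)\sqrt{\alpha_{t-1}/\alpha_t}\le 1-\alpha_t/3$. Summing the recursion over $t$ gives
\[
\textstyle\sum_t \tfrac{\alpha_t}{3} m_t^2\lesssim \sum_t \alpha_t g_t^2+\sum_t \tfrac{\delta}{\alpha_t}\gamma_{t-1}^2 m_{t-1}^2 .
\]
To get $\sum m_t^2$ itself rather than $\sum\alpha_t m_t^2$, I would instead work with $P_t:=m_t^2/\alpha_t$-style quantities. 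The variation of $1/\alpha_t$ is handled by Lemma~\ref{lemma:prop:para}(a), $1/\rho_t-1/\rho_{t-1}\le q$, or by the log-variation bound (b). This produces the $\ln(e+T)$ factor, and the bound on $\sum\alpha_t g_t^2/\alpha_t=\GGG_T$ gives the $1+\sum g_t^2$ term. The constant $12$ in $\kappa$ should come from the $1/3$ contraction and the factor $2$ ratio.

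\textbf{Step 3: the drift term (main obstacle).} We must show that $\sum_t \delta\alpha_t^{-2}\gamma_{t-1}^2m_{t-1}^2$ (after the weighting) is at most a fraction of $\sum m_t^2$ plus a log term. Here I use $\gamma_{t-1}\le\beta_{t-1}$ and $\gamma_{t-1}^2\le\alpha_{t-1}$, together with $\alpha_{t-1}\le 2\alpha_t$ and $\alpha_T/\alpha_t\le\varpi$.

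The key is the AdaGrad-type summation
\[
\textstyle\sum_t \beta_t^2\,\alpha_t^{-1/2}m_t^2=\sum_t \dfrac{\alpha_t^{-1/2}m_t^2}{1+\sum_{j\le t}\alpha_j^{-1/2}m_j^2}\le \ln\Big(1+\sum_t\alpha_t^{-1/2}m_t^2\Big).
\]
Each summand is at most $36\delta\varpi^{3/2}\lambda$-ish: indeed $m_t\lesssim$ bounded by $\hG$ and the $\alpha$-ratios, and $1/\alpha_t\le (1+\NNN)^{q}t^{q}$ is polynomial. So the sum is $\lesssim \delta\varpi^{3/2}\ln(e+\text{poly}(T)\lambda)$.

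I expect the real difficulty to be matching the powers of $\alpha_t$: the drift carries $\alpha_t^{-1}$ while $\beta$ normalizes with $\alpha^{-1/2}$. I would split by cases. When $\gamma_{t-1}=\sqrt{\alpha_{t-1}}$, the drift term is $\le 2\delta m_{t-1}^2$. Absorbing this needs a smallness that we do not have, so instead I would use $\gamma^2\le\beta\sqrt{\alpha}$ (geometric mean of the two minima). This gives $\gamma_{t-1}^2/\alpha_t\lesssim\beta_{t-1}\alpha^{-1/2}$.

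Then I would use the dichotomy "either $\sum m_t^2\le\kappa'\ln\cdot(1+\GGG_T)$ already, or $\beta$ is small enough that $\delta\beta\le 1/2$ after a burn-in whose contribution is bounded by the log sum." Absorbing into the left side and collecting the two regimes yields the $\max(12,96\delta\varpi^{3/2}\ln(\cdot))$ form of $\kappa$.
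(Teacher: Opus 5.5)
Your Step~1 recursion is correct, but the argument does not close as planned, for two reasons.

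\textbf{Step~2 does not absorb with your contraction.} After Young at parameter $\alpha_t/2$ the contraction is only $1-\alpha_t/2$. Dividing by $\alpha_t$ and summing by parts with $1/\alpha_{t+1}-1/\alpha_t\le q$ (Lemma~\ref{lemma:prop:para}(a)) gives $\tfrac12\sum_t m_t^2\le q\sum_t m_t^2+\cdots$. Since $q=2/3>1/2$, nothing is gained. The paper avoids Young entirely. It writes $\M_t=(1-\alpha_t)\M_{t-1}+\alpha_t\Z_t$ with $\Z_t:=\G_t+\frac{1-\alpha_t}{\alpha_t}\Delta_t$, which keeps the exact contraction $m_t^2\le(1-\alpha_t)m_{t-1}^2+\alpha_t z_t^2$. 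Lemma~\ref{lemma:diff} then gives $\MMM_T\le\frac{1}{1-q}\sum_t z_t^2=3\sum_t z_t^2$; a Young parameter $c\alpha_t$ with $c<1/3$ would also do. This step only produces the constant $3$, not the $\ln(e+T)$.

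\textbf{Step~3 aims at the wrong target, and the proposed devices fail.} The drift need not be absorbed into a fraction of $\MMM_T$; it is already of the form of the right-hand side. After the index shift ($\alpha_t/\alpha_{t+1}\le 1+q$), you have $\sum_t z_t^2\le 2\GGG_T+6\delta\sum_t \gamma_t^2m_t^2/\alpha_t^2$. Use only $\gamma_t^2\le\beta_t^2$ and split $m_t^2/\alpha_t^2=\alpha_t^{-3/2}\cdot\alpha_t^{-1/2}m_t^2$. Then pull out $\alpha_t^{-3/2}\le\varpi^{3/2}\alpha_T^{-3/2}\le\varpi^{3/2}(1+\GGG_T)$. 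The last step uses $\alpha_T^{-3/2}=\frac{1+\GGG_{T-1}}{1+\NNN_{T-1}}$, and this is precisely why $q=2/3$. What remains is an AdaGrad-Norm sum, at most $\ln(1+\MMM_T(1+\GGG_T)^{1/3})$, so
\[
\MMM_T\le 6\GGG_T+18\delta\varpi^{3/2}(1+\GGG_T)\ln\bigl(1+\MMM_T(1+\GGG_T)^{1/3}\bigr).
\]
This is closed as follows: either $\MMM_T\le12\GGG_T$, or Lemma~\ref{lemma:yc} applies to $y:=\MMM_T/(36\delta\varpi^{3/2}(1+\GGG_T))$. Bounding $\GGG_T\le T\hG^2$ inside the logarithm then gives the $\ln(e+T)$.

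Your substitute $\gamma^2\le\beta\sqrt{\alpha}$ keeps only one power of $\beta$. The sum is then of square-root type (Lemma~\ref{lemma:adaptive:important}(b)) and yields roughly $\MMM_T\lesssim\GGG_T+\delta^2\varpi^2(1+\GGG_T)^{5/3}$, which is superlinear in $1+\GGG_T$. Your absorption test ``$\delta\beta\le 1/2$'' also drops the factor $\alpha^{-3/2}\sim 1+\GGG$ that multiplies $\beta$ once you divide by $\alpha_t$, and $\beta$ need not dominate it.

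Finally, $m_t$ is not bounded by $\hG$ times $\alpha$-ratios for the STORM estimator. The correct a priori bound is $\gamma_{t-1}m_{t-1}\le\alpha_{t-1}^{1/4}$, hence $m_t\le\hG+\sqrt{\delta}\,t^{2/3}$. That bound is polynomial and could replace Lemma~\ref{lemma:yc} inside the logarithm, but only after the drift has been bounded as above.
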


\begin{lemma}[Tracking-error Recursion]\label{lemma:VR:bound:I}
(Proof in Appendix \ref{app:lemma:VR:bound:I}) Define $\S_t:=\M_t-\nabla f(\X_t)$, $\epsilons_t:= \nabla f(\X_{t};\xi_t) - \nabla f(\X_t)$, $\epsilons'_t:= \nabla f(\X_{t-1};\xi_t) - \nabla f(\X_{t-1})$, $m_t:=\|\M_t\|_{\fro}$. Then
\beq \label{eq:recursive:I:pathwise}
\|\S_t\|_{\fro}^2 \le (1-\alpha_t)\|\S_{t-1}\|_{\fro}^2 + B_t
\qquad a.s.,
\eeq
where $B_t:= 2 \alpha_t^2 \|\epsilons_t\|_{\fro}^2 + 8 \delta \gamma_{t-1}^2m_{t-1}^2 + 2 (1-\alpha_t) \la \S_{t-1},\epsilons_t   - (1-\alpha_t) \epsilons'_t\ra$, and $\delta:=n L^2\theta^2$.
\end{lemma}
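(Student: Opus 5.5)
Start from the STORM recursion and write $\S_t$ as a contraction of $\S_{t-1}$ plus fresh noise plus a smoothness correction. Using $\M_t=(1-\alpha_t)(\M_{t-1}-\nabla f(\X_{t-1};\xi_t))+\G_t$, add and subtract $\nabla f(\X_t)$ and $\nabla f(\X_{t-1})$ to obtain the identity
\begin{align*}
\S_t &= (1-\alpha_t)\S_{t-1} + \epsilons_t - (1-\alpha_t)\epsilons'_t + (1-\alpha_t)\big(\nabla f(\X_{t-1})-\nabla f(\X_t)\big)\\
&= (1-\alpha_t)\S_{t-1} + \alpha_t\epsilons_t + (1-\alpha_t)\mathbf{Z}_t,
\end{align*}
where $\mathbf{Z}_t:=\big(\G_t-\nabla f(\X_{t-1};\xi_t)\big)-\big(\nabla f(\X_t)-\nabla f(\X_{t-1})\big)$. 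The second form is the one to use for the quadratic terms. By individual smoothness and Jensen, $\|\mathbf{Z}_t\|_{\fro}\le 2L\|\X_t-\X_{t-1}\|_{\fro}$. By the update rule and (P2), $\|\X_t-\X_{t-1}\|_{\fro}=\theta\gamma_{t-1}m_{t-1}\sqrt n$. Hence $\|\mathbf{Z}_t\|_{\fro}^2\le 4\delta\gamma_{t-1}^2m_{t-1}^2$. For $t=1$ we have $\X_0=\X_1$, so this term vanishes.

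Next, expand the square. The cross term with $\S_{t-1}$ is kept in its original form, $2(1-\alpha_t)\la\S_{t-1},\epsilons_t-(1-\alpha_t)\epsilons'_t\ra$, which matches $B_t$, together with the drift cross term $2(1-\alpha_t)^2\la\S_{t-1},\nabla f(\X_{t-1})-\nabla f(\X_t)\ra$. The term $(1-\alpha_t)^2\|\S_{t-1}\|^2$ is bounded by $(1-\alpha_t)\|\S_{t-1}\|^2$ minus a slack of $\alpha_t(1-\alpha_t)\|\S_{t-1}\|^2$. The remaining square $\|\alpha_t\epsilons_t+(1-\alpha_t)\mathbf{Z}_t\|^2$ is bounded by $2\alpha_t^2\|\epsilons_t\|^2+2\|\mathbf{Z}_t\|^2\le 2\alpha_t^2\|\epsilons_t\|^2+8\delta\gamma_{t-1}^2m_{t-1}^2$. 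This produces exactly the first two pieces of $B_t$.

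The main obstacle is the drift cross term $2(1-\alpha_t)^2\la\S_{t-1},\nabla f(\X_{t-1})-\nabla f(\X_t)\ra$, which does not appear in $B_t$. The first attempt is to absorb it with Young's inequality against the slack $\alpha_t(1-\alpha_t)\|\S_{t-1}\|^2$. That costs $(1-\alpha_t)^3L^2\|\X_t-\X_{t-1}\|^2/\alpha_t$, which carries an unwanted $1/\alpha_t$. If this does not fit the stated constant, reorganize instead. Expand the square with the first form, grouping $\epsilons_t-(1-\alpha_t)\epsilons'_t$ together with the drift difference into $\alpha_t\epsilons_t+(1-\alpha_t)\mathbf{Z}_t$. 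The drift inner product with $\S_{t-1}$ is then absorbed into a sharper control of $\|\mathbf{Z}_t\|$-type terms, using $\gamma_{t-1}\le\sqrt{\alpha_{t-1}}$ and $\alpha_{t-1}/\alpha_t\le 2$ from Lemma~\ref{lemma:prop:s:2}. This converts $\gamma_{t-1}^2/\alpha_t$ into $O(\gamma_{t-1}^2)\cdot$const plus leftover slack. The $8\delta$ coefficient should accommodate this after tuning the Young weights.

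Finally, verify the case $\alpha_t=1$ ($t=1$) separately. There $\S_1=\epsilons_1$ and the bound $\|\S_1\|^2\le 2\|\epsilons_1\|^2$ is trivial.
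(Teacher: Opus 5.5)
\textbf{Genuine gap: your first identity is false, and the ``main obstacle'' it creates does not exist.} Your first displayed line is not an identity. Expanding its right-hand side gives $\mathbf{S}_t+(1-\alpha_t)\big(\nabla f(\mathbf{X}_{t-1})-\nabla f(\mathbf{X}_t)\big)$, not $\mathbf{S}_t$. In the STORM update, the correction $-(1-\alpha_t)\nabla f(\mathbf{X}_{t-1};\xi_t)$ combines with the $(1-\alpha_t)\nabla f(\mathbf{X}_{t-1})$ you add to form $(1-\alpha_t)\mathbf{S}_{t-1}$. Together they give $-(1-\alpha_t)\boldsymbol{\epsilon}'_t$ and nothing else, so the exact recursion is
\[
\mathbf{S}_t=(1-\alpha_t)\mathbf{S}_{t-1}+\boldsymbol{\epsilon}_t-(1-\alpha_t)\boldsymbol{\epsilon}'_t .
\]
Cancelling the drift is the whole purpose of the recursive correction, and this is the decomposition $\mathbf{S}_t=\mathbf{A}_t+\mathbf{R}_t$ used in the paper. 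Your second form is actually correct, because $\mathbf{Z}_t=\boldsymbol{\epsilon}_t-\boldsymbol{\epsilon}'_t$ and hence $\alpha_t\boldsymbol{\epsilon}_t+(1-\alpha_t)\mathbf{Z}_t=\boldsymbol{\epsilon}_t-(1-\alpha_t)\boldsymbol{\epsilon}'_t$. The two lines you set equal differ by exactly the drift. The same slip recurs in your ``reorganization'', where $\boldsymbol{\epsilon}_t-(1-\alpha_t)\boldsymbol{\epsilon}'_t$ \emph{plus} the drift is regrouped as $\alpha_t\boldsymbol{\epsilon}_t+(1-\alpha_t)\mathbf{Z}_t$.

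\textbf{Why this is a real gap.} As written, the proposal leaves the cross term $2(1-\alpha_t)^2\langle\mathbf{S}_{t-1},\nabla f(\mathbf{X}_{t-1})-\nabla f(\mathbf{X}_t)\rangle$ unaccounted for, and neither of your remedies handles it. Young's inequality against the slack $\alpha_t(1-\alpha_t)\|\mathbf{S}_{t-1}\|_{\mathrm F}^2$ leaves up to $\delta\gamma_{t-1}^2m_{t-1}^2/\alpha_t$. Using $\gamma_{t-1}^2\le\alpha_{t-1}\le 2\alpha_t$ only turns this into $2\delta m_{t-1}^2$. That has lost the factor $\gamma_{t-1}^2$, so it is not dominated by $8\delta\gamma_{t-1}^2m_{t-1}^2$, and it would not survive the logarithmic summation in Lemma~\ref{lemma:bcd:MuonI}. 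A $1/\alpha$ price of this kind is what the paper genuinely pays in Option A, through the term $3\delta\gamma_{t-1}^2m_{t-1}^2/\alpha_{t-1}$ in Lemma~\ref{lemma:VR:bound:A}, because there is no STORM correction there. In Option I it is unnecessary.

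\textbf{How to fix it.} Drop the first line and work only with the exact recursion, with $\mathbf{R}_t:=\boldsymbol{\epsilon}_t-(1-\alpha_t)\boldsymbol{\epsilon}'_t$.
\begin{itemize}
\item Expand $\|(1-\alpha_t)\mathbf{S}_{t-1}+\mathbf{R}_t\|_{\mathrm F}^2$ and use $(1-\alpha_t)^2\le 1-\alpha_t$.
\item Keep $2(1-\alpha_t)\langle\mathbf{S}_{t-1},\mathbf{R}_t\rangle$ verbatim; it is the last term of $B_t$.
\item Bound $\|\mathbf{R}_t\|_{\mathrm F}^2=\|\alpha_t\boldsymbol{\epsilon}_t+(1-\alpha_t)\mathbf{Z}_t\|_{\mathrm F}^2\le 2\alpha_t^2\|\boldsymbol{\epsilon}_t\|_{\mathrm F}^2+8\delta\gamma_{t-1}^2m_{t-1}^2$, exactly as you already did.
\end{itemize}
That is the paper's proof. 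Your estimate of the quadratic term and your $t=1$ check are fine.
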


\begin{lemma}[One-step Descent Inequality]\label{lemma:suff:descent:I}
(Proof in Appendix \ref{app:lemma:suff:descent:I})
Define $c_0:=\frac{\theta}{2}$, $c_1:=\frac{n\theta}{2}$, $c_2:=\frac{Ln\theta^2}{2}$. For all \(t\ge1\), the following inequality holds almost surely:
\beq
f(\X_{t+1})-f(\X_t)+c_0\gamma_t \|\M_t\|_{\fro}^2 \le c_1\gamma_t  \|\S_t\|_{\fro}^2+c_2\gamma_t^2\|\M_t\|_{\fro}^2.
\eeq
\end{lemma}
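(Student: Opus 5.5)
The plan is to run the same argument as Lemma~\ref{lemma:suff:descent:A}, now with $\M_t$ in place of $\V_t$ and $\gamma_t$ in place of $\bar\gamma_t$. Write $\eta_t:=\theta\gamma_t m_t$ and $\mathbf O_t:=\Orth(\M_t)$, so that the update reads $\X_{t+1}-\X_t=-\eta_t\mathbf O_t$. By (P2), $\|\X_{t+1}-\X_t\|_{\fro}^2=n\eta_t^2=n\theta^2\gamma_t^2m_t^2$.

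Individual smoothness implies average smoothness, by Jensen applied to $\nabla f=\EEE_\xi\nabla f(\cdot;\xi)$. The descent lemma therefore gives
\[
f(\X_{t+1})-f(\X_t)\le -\eta_t\langle \nabla f(\X_t),\mathbf O_t\rangle+\tfrac{L}{2}n\theta^2\gamma_t^2m_t^2 .
\]
The last term is exactly $c_2\gamma_t^2\|\M_t\|_{\fro}^2$.

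For the inner product, substitute $\nabla f(\X_t)=\M_t-\S_t$. By (P3), $\langle \M_t,\mathbf O_t\rangle=\|\M_t\|_*\ge m_t$, which gives
\[
-\eta_t\langle \nabla f(\X_t),\mathbf O_t\rangle\le -\theta\gamma_t m_t^2+\theta\gamma_t m_t\langle \S_t,\mathbf O_t\rangle .
\]
For the cross term, use Cauchy--Schwarz together with (P2): $|\langle \S_t,\mathbf O_t\rangle|\le \sqrt n\,s_t$. Young's inequality then gives $m_t\sqrt n\, s_t\le \tfrac12 m_t^2+\tfrac n2 s_t^2$. Multiplying by $\theta\gamma_t$, the cross term is at most $\tfrac{\theta}{2}\gamma_t m_t^2+\tfrac{n\theta}{2}\gamma_t s_t^2$. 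The first piece is absorbed by the $-\theta\gamma_t m_t^2$ term and leaves $-c_0\gamma_t m_t^2$. The second piece is $c_1\gamma_t\|\S_t\|_{\fro}^2$.

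Combining these bounds yields the claimed inequality. It holds pathwise, since no expectation is taken, so it holds almost surely.

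The only point needing care is the rank-deficient case, where $\mathbf O_t$ is a completion. Properties (P2) and (P3) still hold by convention, so the argument goes through. The case $\M_t=\zero$ is trivial because the step vanishes. I do not expect a real obstacle here; the key is to use the Frobenius bound (P2) rather than a spectral/nuclear pairing, so that the constant $n$ appears in $c_1$ as stated.
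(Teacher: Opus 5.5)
Your argument is correct and is essentially the paper's proof: the descent lemma, the split $\nabla f(\X_t)=\M_t-\S_t$, alignment (P3) for the $\M_t$ term, and Young's inequality on the cross term, all with identical constants. The only cosmetic difference is in the cross term: the paper bounds it by $\langle \Orth(\M_t),\S_t\rangle\le\|\S_t\|_*$ (spectral--nuclear duality via (P1)) and then uses $\|\S_t\|_*^2\le n\|\S_t\|_{\fro}^2$, rather than Frobenius Cauchy--Schwarz via (P2). That route gives the same factor $n$, so your closing remark that (P2) is the key choice for getting $n$ in $c_1$ is not accurate.
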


\begin{lemma} \label{lemma:bcd:MuonI}
(Proof in Appendix \ref{app:lemma:bcd:MuonI}) Define $m_t:=\|\M_t\|_{\fro}$. We have:
\begin{enumerate}[label=\textbf{(\alph*)},leftmargin=18pt,itemsep=2pt,topsep=1pt,parsep=0pt,partopsep=0pt]

\item $\ts\EEE [\sum_{t=1}^T\tfrac{B_t}{\sqrt{\alpha_t}} ]\le C_b \ln(e+T)$, where $C_b:=\big( 64(1+\hG^2)^2 + 64 n L^2\theta^2 \big) \cdot \ln(e+\kappa + \kappa\hG^2)$.

\item $\EEE [\sum_{t=1}^T \gamma_t\|\S_t\|_{\fro}^2]
\le \EEE [\sum_{t=1}^T \sqrt{\alpha_t}\|\S_t\|_{\fro}^2]
\le C_c \ln(e+T)$, where $C_c:=12 (\hG^2 + C_b)$.

\item $\ts \sum_{t=1}^T {\gamma}_t^2\|\M_t\|_{\fro}^2\le C_d \ln(e+T)$ almost surely, where $C_d:=2\ln (e+ \kappa (1+\hG^2) )$.

\end{enumerate}
\end{lemma}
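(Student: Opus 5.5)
The three parts are proved in the order (c), (a), (b), since (c) is pathwise and its logarithmic-sum argument is reused in (a). The recurring tool is the elementary AdaGrad-type inequality
\[
\sum_{t=1}^T \frac{a_t}{1+\sum_{j\le t}a_j}\le \ln\Big(1+\sum_{t=1}^T a_t\Big),\qquad a_t\ge 0,
\]
together with the facts that all inner quantities are at most polynomial in $T$. This polynomial growth comes from Lemma~\ref{lemma:OptMuonI:sumV:byG}, Assumption~\ref{ass:BG}, and $\alpha_t\ge (1+T\hG^2)^{-2/3}$, and it turns every such logarithm into a constant times $\ln(e+T)$.

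\textbf{Part (c).} Since $\gamma_t\le\beta_t$ and $\alpha_j\le1$, we have $\beta_t^{-2}\ge 1+\sum_{j\le t}m_j^2$. Hence
\[
\sum_t\gamma_t^2m_t^2\le\sum_t \frac{m_t^2}{1+\sum_{j\le t}m_j^2}\le\ln(1+\MMM_T).
\]
By Lemma~\ref{lemma:OptMuonI:sumV:byG}, $\MMM_T\le\kappa\ln(e+T)(1+T\hG^2)$. Splitting the logarithm of this product gives $\ln(e+\kappa(1+\hG^2))$ plus terms of order $\ln(e+T)$, which yields $C_d\ln(e+T)$ after absorbing constants.

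\textbf{Part (a).} We split $B_t/\sqrt{\alpha_t}$ into its three pieces.

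First, the cross term $2(1-\alpha_t)\alpha_t^{-1/2}\la\S_{t-1},\epsilons_t-(1-\alpha_t)\epsilons'_t\ra$ has zero conditional mean. Indeed, $\alpha_t$ and $\S_{t-1}$ are $\FF_{t-1}$-measurable, while $\EEE_t\epsilons_t=\EEE_t\epsilons'_t=0$ by Assumption~\ref{ass:sigma}. Integrability follows from Lemma~\ref{lemma:prop:s:2} and the lower bound on $\alpha_t$, so this term contributes zero in expectation.

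Second, for $2\alpha_t^{3/2}\|\epsilons_t\|^2$ we use $\EEE_t\|\epsilons_t\|_{\fro}^2\le\EEE_t g_t^2$, since variance is at most the second moment. Then $\alpha_t^{3/2}=\rho_{t-1}^{3/2}=(1+\NNN_{t-1})/(1+\GGG_{t-1})$. We bound $1+\NNN_{t-1}\le 1+\hG^2$, and $(1+\GGG_t)/(1+\GGG_{t-1})\le1+\hG^2$. This gives
\[
\EEE\sum_t\alpha_t^{3/2}\|\epsilons_t\|^2\le(1+\hG^2)^2\,\EEE\ln(1+\GGG_T)\le(1+\hG^2)^2\ln(1+T\hG^2).
\]

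Third, for the drift term $8\delta\gamma_{t-1}^2m_{t-1}^2/\sqrt{\alpha_t}$, Lemma~\ref{lemma:prop:s:2} gives $\alpha_{t-1}/\alpha_t\le2$, so the term is at most $8\sqrt2\,\delta\gamma_{t-1}^2m_{t-1}^2/\sqrt{\alpha_{t-1}}$. Using $\gamma_{t-1}^2\le\beta_{t-1}^2$, this is bounded by
\[
8\sqrt2\,\delta\,\frac{\alpha_{t-1}^{-1/2}m_{t-1}^2}{1+\sum_{j\le t-1}\alpha_j^{-1/2}m_j^2}.
\]
The log-sum inequality then bounds the total by $\ln(1+\sum_j\alpha_j^{-1/2}m_j^2)$. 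This is $O(\ln(e+\kappa+\kappa\hG^2)\ln(e+T))$ by Lemma~\ref{lemma:OptMuonI:sumV:byG} and the polynomial lower bound on $\alpha_j$.

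\textbf{Part (b).} The first inequality is immediate from $\gamma_t\le\sqrt{\alpha_t}$. For the second, set $u_t:=s_t^2/\sqrt{\alpha_t}$ and divide the recursion of Lemma~\ref{lemma:VR:bound:I} by $\sqrt{\alpha_t}$. Using $(1-\alpha_t)\sqrt{\alpha_{t-1}/\alpha_t}\le1-\alpha_t/3$ from Lemma~\ref{lemma:prop:s:2},
\[
u_t\le u_{t-1}-\tfrac{\alpha_t}{3}u_{t-1}+B_t/\sqrt{\alpha_t}.
\]
Telescoping gives $\frac13\sum_t\alpha_tu_{t-1}\le u_0+\sum_tB_t/\sqrt{\alpha_t}$. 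Since $\alpha_t\ge\alpha_{t-1}/2$, we get $\alpha_tu_{t-1}\ge\frac12\sqrt{\alpha_{t-1}}\,s_{t-1}^2$. Therefore $\sum_{t\le T}\sqrt{\alpha_{t-1}}s_{t-1}^2\le 6(u_0+\sum_tB_t/\sqrt{\alpha_t})$, where $u_0=\|\nabla f(\X_0)\|^2\le\hG^2$ because $\M_0=\zero$.

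The main obstacle is the index shift: this sum stops at $T-1$. I would handle the missing last term either by running the telescoping to $T+1$ (a virtual extra step, which is valid since $B_{T+1}$ obeys the same bounds) or by bounding $\sqrt{\alpha_T}s_T^2$ directly through one more application of the recursion. Either route produces the factor $12(\hG^2+C_b)$ after taking expectations and invoking (a).
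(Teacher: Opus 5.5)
Your proposal is correct, and most of it follows the paper's proof essentially step by step:
\begin{itemize}
\item part (c), via the log-sum bound and Lemma~\ref{lemma:OptMuonI:sumV:byG};
\item the drift and martingale pieces of (a);
\item part (b), via the $P_t=s_t^2/\sqrt{\alpha_t}$ telescoping with $(1-\alpha_t)\sqrt{\alpha_{t-1}/\alpha_t}\le 1-\alpha_t/3$. Your first option for the index shift, running the horizon to $T+1$, is exactly what the paper does.
\end{itemize}

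The genuine difference is the noise piece $2\alpha_t^{3/2}\|\epsilons_t\|_{\fro}^2$ in (a). The paper first shifts to $\alpha_{t+1}$, using $\alpha_{t+1}^{3/2}\le(1+\hG^2)/(1+\GGG_t)$. Because $\alpha_{t+1}$ depends on $\xi_t$, it must then invoke the self-normalized comparison Lemma~\ref{lemma:weighted:self:normalized}, a conditional Cauchy--Schwarz argument, to trade $\|\epsilons_t\|_{\fro}^2/(1+\GGG_t)$ for $8(1+\hG^2)\,g_t^2/(1+\GGG_t)$.

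You instead use that the lagged coefficient $\alpha_t=\rho_{t-1}$ is $\FF_{t-1}$-measurable; state this explicitly at that step. You replace $\|\epsilons_t\|_{\fro}^2$ by $g_t^2$ inside $\EEE_t$ before touching the denominator. Only then do you move the denominator forward pathwise with $(1+\GGG_t)/(1+\GGG_{t-1})\le 1+\hG^2$.

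This route is more elementary and bypasses Lemma~\ref{lemma:weighted:self:normalized} entirely. It also lowers the constant of that piece from $64(1+\hG^2)^2$ to $2(1+\hG^2)^2$. It works verbatim with any nonnegative $\FF_{t-1}$-measurable weight in front of the noise term, so the same shortcut would also serve where the paper reuses that lemma. The paper's route buys a single uniform tool shared by both variants, at the cost of the larger constant.

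Your uniform lower bound $\alpha_j\ge(1+T\hG^2)^{-2/3}$ in the drift term is also slightly cleaner than the paper's $\MMM_T/\sqrt{\alpha_T}$ step. That step tacitly treats $\alpha_t$ as monotone; strictly it needs an extra $\sqrt{\varpi}$, which is harmless inside the logarithm.

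Two small points:
\begin{itemize}
\item Integrability of the cross term also needs the almost-sure bound on $m_{t-1}$, hence on $\|\S_{t-1}\|_{\fro}$, from Lemma~\ref{lemma:OptMuonI:sumV:byG}.
\item Rather than ``absorbing constants'', you should check your chains against the stated $C_b$ and $C_d$. They do fit, with room to spare in (a).
\end{itemize}
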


 \begin{lemma} \label{lemma:U:square:MuonI}
 (Proof in Appendix \ref{app:lemma:U:square:MuonI}) Define $\UUU_T:=\sum_{t=1}^T \gamma_t\|\S_t\|_{\fro}^2$. It holds that $\EEE[\UUU_T^2]\le C_u\ln^2(e+T)$, where $C_u:=288\big(\hG^4+C_I^2+C_J^2+C_K\big)$, $C_I := 90 (1+\hG^2)^2\ln(e+\hG^2)$, $C_J:= 64nL^2\theta^2\ln(e+\kappa(1+\hG^2))$, $C_R:=8\sqrt{\varpi}\,\ln(e+\kappa(1+\hG^2))$, $C_w:=64 \hG^2\varpi C_c + 64 \delta C_R\big(\hG^2+{C_I}+{C_J}\big)$, $C_K:=4 (C_w^2+C_w)$.
\end{lemma}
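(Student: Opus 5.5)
The plan is to bound the second moment of $\UUU_T=\sum_{t=1}^T\gamma_t s_t^2$ by unrolling the tracking recursion of Lemma~\ref{lemma:VR:bound:I}. Writing $\gamma_t\le\sqrt{\alpha_t}$, it suffices to control $\EEE[(\sum_t\sqrt{\alpha_t}s_t^2)^2]$, exactly as in the first-moment bound of Lemma~\ref{lemma:bcd:MuonI}(b). I will therefore rederive that first-moment argument pathwise and then square it.

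\textbf{Step 1 (pathwise summation).} Divide \eqref{eq:recursive:I:pathwise} by $\sqrt{\alpha_t}$. Then use Lemma~\ref{lemma:prop:s:2}, specifically $(1-\alpha_t)\sqrt{\alpha_{t-1}/\alpha_t}\le 1-\alpha_t/3$, to get
\[
\tfrac{s_t^2}{\sqrt{\alpha_t}}\le \tfrac{s_{t-1}^2}{\sqrt{\alpha_{t-1}}}-\tfrac{\sqrt{\alpha_t}}{3}s_{t-1}^2\sqrt{\alpha_{t-1}/\alpha_t}^{\,\cdot}+\tfrac{B_t}{\sqrt{\alpha_t}}.
\]
Telescoping then gives, almost surely,
\[
\tfrac13\sum_t \sqrt{\alpha_t}s_{t-1}^2\lesssim s_0^2+\sum_t\tfrac{B_t}{\sqrt{\alpha_t}}.
\]
The index shift costs only a factor $2$, by $\alpha_{t-1}/\alpha_t\le2$, plus the boundary term $s_0^2\le\hG^2$.

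Next I split $B_t/\sqrt{\alpha_t}$ into three pieces:
\begin{itemize}
\item $I_T:=\sum 2\alpha_t^{3/2}\|\epsilons_t\|^2$;
\item $J_T:=\sum 8\delta\gamma_{t-1}^2m_{t-1}^2/\sqrt{\alpha_t}$;
\item the martingale $K_T:=\sum 2(1-\alpha_t)\alpha_t^{-1/2}\la\S_{t-1},\epsilons_t-(1-\alpha_t)\epsilons_t'\ra$.
\end{itemize}
This yields $\UUU_T\lesssim \hG^2+I_T+J_T+K_T$. Squaring, $\EEE[\UUU_T^2]\le c(\hG^4+\EEE I_T^2+\EEE J_T^2+\EEE K_T^2)$, which matches the shape $288(\hG^4+C_I^2+C_J^2+C_K)$.

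\textbf{Step 2 (deterministic pieces).} $I_T$ and $J_T$ should be bounded almost surely by $C_I\ln(e+T)$ and $C_J\ln(e+T)$.

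For $I_T$, use $\|\epsilons_t\|\le2\hG$, which gives $\alpha_t^{3/2}\le 4\hG^2\alpha_t^{3/2}$ and reduces matters to $\sum\alpha_t^{3/2}$. With $q=2/3$ we have $\alpha_{t+1}^{3/2}=\rho_t^{3/2}=(1+\NNN_t)/(1+\GGG_t)$. A standard AdaGrad-type sum then gives $\sum_t g_t^2\rho_t^{3/2}\le(1+\hG^2)\ln(1+\GGG_T)$. I would instead bound $\sum\alpha_t^{3/2}\|\epsilons_t\|^2$ through $\|\epsilons_t\|^2\le2g_t^2+2\hG^2$ and the log-sum inequality, using lagging via Lemma~\ref{lemma:prop:para}. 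This yields a constant of the form $(1+\hG^2)^2\ln(e+\hG^2)\ln(e+T)$.

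For $J_T$, use $\gamma_{t-1}^2\le\sqrt{\alpha_{t-1}}\beta_{t-1}^2$ and $\sqrt{\alpha_{t-1}/\alpha_t}\le\sqrt2$. This gives $J_T\lesssim\delta\sum\beta_{t-1}^2 m_{t-1}^2$. Now $\beta_t^2 m_t^2\le\sqrt{\alpha_t}\,\beta_t^2\alpha_t^{-1/2}m_t^2$, so the sum telescopes logarithmically to $\ln(1+\sum\alpha_j^{-1/2}m_j^2)$. This quantity is controlled by Lemma~\ref{lemma:OptMuonI:sumV:byG} together with $\alpha_j^{-1/2}\le(1+T\hG^2)^{1/3}$, giving $C_J\ln(e+T)$.

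\textbf{Step 3 (martingale piece).} This is the main obstacle. $K_T$ is a sum of martingale differences, since $\alpha_t$ and $\S_{t-1}$ are $\FF_{t-1}$-measurable and $\EEE_t[\epsilons_t]=\EEE_t[\epsilons'_t]=0$. Hence
\[
\EEE K_T^2=\sum\EEE[4\alpha_t^{-1}\la\S_{t-1},\cdot\ra^2]\le 64\hG^2\,\EEE\sum_t\alpha_t^{-1}s_{t-1}^2 .
\]
The factor $\alpha_t^{-1}$ is too large to compare directly with $\sqrt{\alpha_t}s_{t-1}^2$, so I would not bound $s_{t-1}$ crudely.

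Instead, the first thing I would try is to keep one factor of $\alpha$ through the coefficient $(1-\alpha_t)$. The variance term of $\epsilons_t-(1-\alpha_t)\epsilons'_t$ is, by individual smoothness, of order $\alpha_t^2\hG^2+L^2\|\X_t-\X_{t-1}\|^2=\alpha_t^2\hG^2+\delta\gamma_{t-1}^2m_{t-1}^2$. This gives
\[
\EEE K_T^2\lesssim \hG^2\varpi\,\EEE\sum\sqrt{\alpha_t}s_{t-1}^2+\delta\,\EEE\sum\alpha_t^{-1}\gamma_{t-1}^2m_{t-1}^2 s_{t-1}^2 .
\]
The first term is bounded by $C_c\ln(e+T)$ using Lemma~\ref{lemma:bcd:MuonI}(b) and the ratio $\alpha_T/\alpha_t\le\varpi$.

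For the second term, I would use $\gamma_{t-1}^2\le\sqrt{\alpha_{t-1}}\beta_{t-1}^2$ and bound $s_{t-1}^2$ by its pathwise bound from Step 1 (of size $\hG^2+I+J$), producing the $C_R(\hG^2+C_I+C_J)$ term. If a residual $\ln^2$ appears, I would absorb it via $C_K=4(C_w^2+C_w)$, using the inequality $x\le C_w\sqrt{x}\ln\Rightarrow x\le C_w^2\ln^2$. The self-referential structure of $\EEE\UUU_T^2$ appearing on both sides is handled the same way, yielding $C_K\ln^2(e+T)$.
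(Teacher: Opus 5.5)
\textbf{Verdict: genuine gap.} Your skeleton matches the paper's. You unroll the recursion for $P_t:=s_t^2/\sqrt{\alpha_t}$, split $B_t/\sqrt{\alpha_t}$ into increments $I_t+J_t+K_t$ with sums $\III_T,\JJJ_T,\KKK_T$ (your $I_T,J_T,K_T$), square, and treat the martingale part by orthogonality plus a self-consistent inequality. The failure is in Step~2: your claim that $\III_T=\sum_t 2\alpha_t^{3/2}\|\epsilons_t\|_{\fro}^2$ is almost surely $O(\ln(e+T))$ is false.

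\textbf{Counterexample.} Take $f(\X;\xi)=\xi h(\X)$ with $\xi\in\{0,2\}$ equiprobable, and $h$ smooth, bounded below, with bounded gradient and $\nabla h(\X_1)\ne\zero$. On the event $\xi_1=\cdots=\xi_T=0$, which has probability $2^{-T}$, every $g_t=0$. Hence $\alpha_t=1$, $\M_t=\zero$, $\X_t=\X_1$, and $\III_T=2T\|\nabla h(\X_1)\|_{\fro}^2$.

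\textbf{Why your route fails here, and what replaces it.} Bounding via $\|\epsilons_t\|_{\fro}^2\le 2g_t^2+2\hG^2$ leaves $2\hG^2\sum_t\alpha_t^{3/2}$, which is of order $T$ whenever $\GGG_t$ stays bounded. The reason is that $\|\nabla f(\X_t)\|_{\fro}^2$ is controlled by the observed $g_t^2$ only in conditional expectation, not pathwise. The missing ingredient is Lemma~\ref{lemma:weighted:self:normalized}. The paper proceeds as follows:
\begin{itemize}
\item It bounds $\III_T\le 6(1+\hG^2)Y_T$, where $Y_T:=\sum_t y_t$ and $y_t:=\|\epsilons_t\|_{\fro}^2/(1+\GGG_t)$.
\item It expands $Y_T^2=\sum_t y_t^2+2\sum_t Y_{t-1}y_t$.
\item It applies the lemma twice, once with $W\equiv 1$ and once with the predictable weight $W_{t-1}=Y_{t-1}$, to get $\EEE[Y_T^2]\lesssim\ln^2(e+T)$. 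This is where $C_I$ comes from.
\end{itemize}
Only $\JJJ_T$ admits a pathwise bound.

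\textbf{Step 3 is not closed.} Your split of $K_t^2$ via $\zetas_t=\alpha_t\epsilons'_t+(\epsilons_t-\epsilons'_t)$ is right. The second piece is at most $64\delta R_tP_{t-1}$, with $R_t:=\gamma_{t-1}^2m_{t-1}^2/\sqrt{\alpha_{t-1}}$. But the pathwise consequence of Step~1 is $P_{t-1}\le P_0+\III_T+\JJJ_T+\KKK_{t-1}$. It contains the signed martingale partial sum $\KKK_{t-1}$, not just ``$\hG^2+I+J$''.

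That term is what creates the self-reference. The paper handles it with the identity $\sum_tR_t\KKK_{t-1}=\RRR_T\KKK_T-\sum_t\RRR_tK_t$, where $\RRR_t:=\sum_{i\le t}R_i$ is $\FF_{t-1}$-measurable. This gives $\EEE[\sum_tR_t\KKK_{t-1}]=\EEE[\RRR_T\KKK_T]$. The almost-sure bound $\RRR_T\le C_R\ln(e+T)$ then gives $\EEE[\RRR_T\KKK_T]\le C_R\ln(e+T)\sqrt{\EEE[\KKK_T^2]}$. The resulting quadratic inequality is solved for $\EEE[\KKK_T^2]$, yielding $C_K$, not for $\EEE[\UUU_T^2]$. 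The companion term $\EEE[\RRR_T\III_T]$ again needs the in-expectation bound on $\III_T$.

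\textbf{A false inequality in Steps 2 and 3.} You use $\gamma_{t-1}^2\le\sqrt{\alpha_{t-1}}\,\beta_{t-1}^2$, which fails, e.g.\ whenever $\beta_{t-1}<\sqrt{\alpha_{t-1}}<1$. The correct move is $\gamma_{t-1}^2\le\beta_{t-1}^2$. One factor $\alpha_{t-1}^{-1/2}$ is then absorbed into the AdaGrad numerator $m_{t-1}^2/\sqrt{\alpha_{t-1}}$, which makes $\JJJ_T$ and $\RRR_T$ logarithmic. In Step~3 the other factor converts $s_{t-1}^2$ into $P_{t-1}$.
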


\begin{lemma}\label{lemma:weighted:bridge:MuonI}
 (Proof in Appendix \ref{app:lemma:weighted:bridge:MuonI})
Let $F_1:=f(\X_1)-f_*$ and $\MMM_T:=\sum_{t=1}^T\|\M_t\|_{\fro}^2$. Then $\EEE[\sqrt{\alpha_T}\,\MMM_T]\le C_v\ln^2(e+T)$, where $C_v:=4 \varpi^2 (C_w+C_x)$, $C_w:=\frac{1}{c_0}\big(F_1+c_1C_c+c_2C_d\big)$, and $C_x:=\tfrac{3}{c_0^2} \left( F_1^2+c_1^2C_u+c_2^2C_d^2 \right)$.
\end{lemma}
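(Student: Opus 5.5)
The plan is to mirror the bridge argument behind Lemma~\ref{R_R}. First, sum the one-step descent inequality into a pathwise bound on $\sum_t\gamma_t m_t^2$. Then convert the left-hand side into $\sqrt{\alpha_T}\,\MMM_T$ using the ratio bounds of Lemma~\ref{lemma:prop:s:2}. Finally, take expectations using the first and second moment controls of Lemmas~\ref{lemma:bcd:MuonI} and~\ref{lemma:U:square:MuonI}.

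\textbf{Step 1 (telescoping).} Summing Lemma~\ref{lemma:suff:descent:I} over $t=1,\dots,T$ and using $f(\X_{T+1})\ge f_*$ (Assumption~\ref{ass:F}) gives, almost surely,
\[
c_0\sum_{t=1}^T\gamma_t m_t^2\le W_T:=F_1+c_1\UUU_T+c_2\sum_{t=1}^T\gamma_t^2m_t^2 .
\]
By Lemma~\ref{lemma:bcd:MuonI}(b),(c),
\[
\EEE[W_T]\le (F_1+c_1C_c+c_2C_d)\ln(e+T)=c_0C_w\ln(e+T).
\]
By $(a+b+c)^2\le 3(a^2+b^2+c^2)$, Lemma~\ref{lemma:U:square:MuonI}, and the almost-sure bound in Lemma~\ref{lemma:bcd:MuonI}(c),
\[
\EEE[W_T^2]\le 3\bigl(F_1^2+c_1^2C_u+c_2^2C_d^2\bigr)\ln^2(e+T)=c_0^2C_x\ln^2(e+T).
\]

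\textbf{Step 2 (lower-bounding the left side).} Lemma~\ref{lemma:prop:s:2} gives $\gamma_t\ge\gamma_T/\sqrt{\varpi}$, so $\sum_t\gamma_tm_t^2\ge \gamma_T\MMM_T/\sqrt{\varpi}$. Set $X:=\sqrt{\alpha_T}\MMM_T$ and $Y:=\sqrt{\varpi}\,W_T/c_0$, so that $\gamma_T\MMM_T\le Y$. Since $\gamma_T=\min\{\sqrt{\alpha_T},\beta_T\}$, split into two cases.
\begin{itemize}
\item If $\gamma_T=\sqrt{\alpha_T}$, then directly $X\le Y$.
\item If $\gamma_T=\beta_T$, use $\alpha_j\ge\alpha_T/\varpi$ (again Lemma~\ref{lemma:prop:s:2}) and $\alpha_T\le1$ to get
\[
\beta_T^{-2}\le 1+\sqrt{\varpi}\,\alpha_T^{-1/2}\MMM_T\le \alpha_T^{-1}\bigl(1+\sqrt{\varpi}X\bigr).
\]
Hence $\beta_T\MMM_T\ge X(1+\sqrt{\varpi}X)^{-1/2}$, i.e.\ $X^2\le Y^2(1+\sqrt{\varpi}X)$. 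Solving this quadratic inequality gives $X\le Y+\sqrt{\varpi}Y^2$.
\end{itemize}
In both cases $X\le Y+\sqrt{\varpi}\,Y^2$, so
\[
X\le \sqrt{\varpi}\,W_T/c_0+\varpi^{3/2}W_T^2/c_0^2\le 4\varpi^2\bigl(W_T/c_0+W_T^2/c_0^2\bigr),
\]
using $\varpi\ge1$ to absorb constants.

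\textbf{Step 3 (expectation).} Taking expectations and inserting the bounds from Step~1 yields
\[
\EEE[X]\le 4\varpi^2\bigl(C_w\ln(e+T)+C_x\ln^2(e+T)\bigr)\le 4\varpi^2(C_w+C_x)\ln^2(e+T)=C_v\ln^2(e+T),
\]
where the middle step uses $\ln(e+T)\ge1$.

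The main obstacle is Step~2. The normalizer $\beta_T$ is built from the $\alpha_j^{-1/2}$-weighted momentum energy, not from $\MMM_T$ itself. The non-monotone coefficient must therefore be handled through the pathwise ratio bound $\alpha_T/\alpha_j\le\varpi$, and the resulting implicit inequality in $X$ has to be resolved. This is precisely why the second moment $\EEE[\UUU_T^2]$ of Lemma~\ref{lemma:U:square:MuonI} is needed, and not just the first.
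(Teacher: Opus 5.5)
Your proposal is correct and follows essentially the same route as the paper: telescope Lemma~\ref{lemma:suff:descent:I} to control the first and second moments of $\sum_t\gamma_t m_t^2$, pass to $\sqrt{\alpha_T}\,\MMM_T$ via $\gamma_T\le\sqrt{\varpi}\,\gamma_t$ and $\alpha_j^{-1/2}\le\sqrt{\varpi}\,\alpha_T^{-1/2}$, and resolve the resulting implicit inequality to obtain $X\le 4\varpi^2\bigl(W_T/c_0+W_T^2/c_0^2\bigr)$ before taking expectations. Your explicit case split on $\gamma_T$ and exact solution of $X^2\le Y^2(1+\sqrt{\varpi}X)$ is a slightly cleaner version of the paper's max-based manipulation, because it keeps track of the $\sqrt{\varpi}$ factor inside the square root, which the paper simply absorbs into the constant.
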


We now provide the convergence guarantee for {OptMuon-I}.

\begin{theorem}\label{the:it:MuonI}
(Proof in Appendix \ref{app:the:it:MuonI}) Under Assumptions \ref{ass:F}, \ref{ass:sigma}, and \ref{ass:BG} and the individual smoothness condition in Assumption \ref{ass:f}, for every $T\ge1$, define $C_{z}:=\sqrt{2C_v}+ \sqrt{2 C_c} \cdot \varpi^{1/4}$, $C_{A,1}:=4 C_z^{2}$ and $C_{A,2}:=2 C_z$. Then {OptMuon-I} satisfies the explicit bound
\beq
\ts \EEE \left[\frac{1}{T}\sum_{t=1}^T \|\nabla f(\X_t)\|_{\fro} \right]
\le C_{A,1} \frac{\ln^{3/2}(e+T)}{T^{1/2}} +  C_{A,2}\frac{\sigma^{1/3}\ln(e+T)}{T^{1/3}}.
\eeq
Consequently, if \(R\sim {\rm Unif}\{1,\ldots,T\}\) is independent of the
algorithmic randomness and \(\X_{\rm out}:=\X_R\), then the same bound holds for
\(\EEE[\|\nabla f(\X_{\rm out})\|_{\fro}]\).
\end{theorem}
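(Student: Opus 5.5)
The plan is to turn the two expectation bounds already available for Option~I into a pathwise inequality. These are Lemma~\ref{lemma:weighted:bridge:MuonI}, $\EEE[\sqrt{\alpha_T}\MMM_T]\le C_v\ln^2(e+T)$, and Lemma~\ref{lemma:bcd:MuonI}(b), $\EEE[\sum_t\sqrt{\alpha_t}s_t^2]\le C_c\ln(e+T)$. The only random quantity that is not yet controlled is the coefficient $\alpha_T$ in the denominator, and we control it through the accumulated stochastic-gradient energy $\GGG_T$. Define
\[
X:=\sqrt{\alpha_T}\,\MMM_T,\qquad Y:=\textstyle\sum_{t=1}^T\sqrt{\alpha_t}\,s_t^2,\qquad \mathbb E_T:=\textstyle\sum_{t=1}^T\|\epsilons_t\|_{\fro}^2 .
\]

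\textbf{Step 1 (lower bound on the coefficients).} Since $\max_i g_i^2\ge0$, the definition of $\rho_t$ with $q=2/3$ gives $\alpha_T\ge(1+\GGG_T)^{-2/3}$. Lemma~\ref{lemma:prop:s:2} gives $\alpha_t\ge\alpha_T/\varpi$ for every $t\le T$. Together,
\[
\alpha_t^{-1/2}\le\varpi^{1/2}(1+\GGG_T)^{1/3}\qquad\text{for all }t\le T.
\]

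\textbf{Step 2 (pathwise bound on the gradient norms).} Write $\nabla f(\X_t)=\M_t-\S_t$, so $\|\nabla f(\X_t)\|_{\fro}\le m_t+s_t$. By Cauchy--Schwarz,
\[
\textstyle\sum_t m_t\le\sqrt{T\MMM_T}=\sqrt{T X/\sqrt{\alpha_T}},\qquad \sum_t s_t\le\sqrt{Y\sum_t\alpha_t^{-1/2}}.
\]
Inserting Step~1 yields
\[
\textstyle\sum_t\|\nabla f(\X_t)\|_{\fro}\le\sqrt T\,(1+\GGG_T)^{1/6}\big(\sqrt X+\varpi^{1/4}\sqrt Y\big).
\]
This already has the shape of $C_z$. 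The same argument applied to squares gives
\[
\textstyle\sum_t\|\nabla f(\X_t)\|_{\fro}^2\le 2\MMM_T+2\SSS_T\le 2\varpi^{1/2}(1+\GGG_T)^{1/3}(X+Y)=:(1+\GGG_T)^{1/3}K .
\]

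\textbf{Step 3 (closing the self-referential bound on $\GGG_T$).} This is the main obstacle, because $\GGG_T$ depends on the very gradients being bounded. From $g_t^2\le2\|\nabla f(\X_t)\|_{\fro}^2+2\|\epsilons_t\|_{\fro}^2$ and Step~2,
\[
1+\GGG_T\le 2(1+\GGG_T)^{1/3}K+1+2\mathbb E_T .
\]
Solving this scalar inequality (consider separately which term on the right dominates) gives
\[
1+\GGG_T\lesssim K^{3/2}+1+2\mathbb E_T,\qquad\text{hence}\qquad (1+\GGG_T)^{1/6}\lesssim K^{1/4}+(1+2\mathbb E_T)^{1/6}.
\]
Substituting into Step~2 and using $\sqrt X+\varpi^{1/4}\sqrt Y\lesssim\sqrt K$ up to the constants packaged in $C_z$, we get
\[
\textstyle\frac1T\sum_t\|\nabla f(\X_t)\|_{\fro}\lesssim T^{-1/2}\big(K^{3/4}+K^{1/2}(1+2\mathbb E_T)^{1/6}\big).
\]

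\textbf{Step 4 (taking expectations).} By Jensen, $\EEE[K^{3/4}]\le(\EEE K)^{3/4}\lesssim C_z^2\ln^{3/2}(e+T)$; this produces the $\ln^{3/2}(e+T)/T^{1/2}$ term. For the mixed term, Cauchy--Schwarz and then Jensen give
\[
\EEE\big[K^{1/2}\mathbb E_T^{1/6}\big]\le(\EEE K)^{1/2}\big(\EEE\,\mathbb E_T^{1/3}\big)^{1/2}\le(\EEE K)^{1/2}(\EEE\,\mathbb E_T)^{1/6},
\]
and $\EEE\,\mathbb E_T\le T\sigma^2$ by Assumption~\ref{ass:sigma}. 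This yields $C_z\ln(e+T)\,T^{-1/2}(T\sigma^2)^{1/6}=C_z\sigma^{1/3}\ln(e+T)\,T^{-1/3}$. The constant part $(1+\cdot)^{1/6}$ of the mixed term contributes only $\ln(e+T)/T^{1/2}$, which is absorbed into the first term.

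Tracking the factors $2$ and $\varpi$ through Steps~2--4 should give $C_{A,1}=4C_z^2$ and $C_{A,2}=2C_z$. The claim for $\X_{\rm out}$ then follows because $\EEE\|\nabla f(\X_R)\|_{\fro}=\EEE[\frac1T\sum_t\|\nabla f(\X_t)\|_{\fro}]$ when $R$ is uniform and independent of the algorithm.
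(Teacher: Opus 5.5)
Your argument is correct. It uses the same ingredients as the paper: Lemma~\ref{lemma:weighted:bridge:MuonI}, Lemma~\ref{lemma:bcd:MuonI}(b), the bound $\alpha_T^{-1/2}\le(1+\GGG_T)^{1/3}$, the split $\nabla f(\X_t)=\M_t-\S_t$, and a fixed-point argument for $\GGG_T$. The difference is that you resolve the self-reference in a different order.

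The paper passes to expectations immediately. It sets $\ZZ_T:=\EEE[(1+\GGG_T)^{1/3}]$ and decouples $\alpha_T$ from $\MMM_T$ and $\SSS_T$ via $\EEE\sqrt{UV}\le\sqrt{\EEE U\,\EEE V}$, which gives $\EEE[\sqrt{\sum_t\|\nabla f(\X_t)\|_{\fro}^2}]\le C_z\ln(e+T)\sqrt{\ZZ_T}$. It then solves the scalar inequality $\ZZ_T\le 1+2C_z^{2/3}\ln^{2/3}(e+T)\,\ZZ_T^{1/3}+2(T\sigma^2)^{1/3}$, using Jensen for the $1/3$-powers, and never forms a pathwise bound on $\GGG_T$.

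You instead solve the fixed point pathwise, $1+\GGG_T\le 8K^{3/2}+2\big(1+2\sum_t\|\epsilons_t\|_{\fro}^2\big)$. This gives an almost-sure bound on $\frac1T\sum_t\|\nabla f(\X_t)\|_{\fro}$, and only then do you integrate, using Jensen for $K^{3/4}$ and Cauchy--Schwarz plus Jensen for the mixed term $K^{1/2}(\sum_t\|\epsilons_t\|_{\fro}^2)^{1/6}$.

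The paper's route is shorter: one scalar inequality, and no H\"older step on a product of random variables. Yours isolates an almost-sure inequality in which randomness enters only through the two lemma-controlled quantities $X,Y$ and the raw noise energy. That is the natural starting point for a high-probability version, and it also gives slightly smaller constants.

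One bookkeeping point: in your squared bound you put $\sqrt{\varpi}$ on $X$ as well. That is valid, but it makes $\EEE K$ exceed $C_z^2\ln^2(e+T)$ by up to a factor $\sqrt{\varpi}$, so $C_{A,1}=4C_z^2$ would not follow as written. Since $\MMM_T=X/\sqrt{\alpha_T}\le(1+\GGG_T)^{1/3}X$ needs no $\varpi$, take $K:=2(X+\sqrt{\varpi}\,Y)$. Then $\sqrt X+\varpi^{1/4}\sqrt Y\le\sqrt K$ and $\EEE K\le 2C_v\ln^2(e+T)+2\sqrt{\varpi}\,C_c\ln(e+T)\le C_z^2\ln^2(e+T)$. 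Splitting $(1+\GGG_T)^{1/6}\le 8^{1/6}K^{1/4}+2^{1/6}+4^{1/6}(\sum_t\|\epsilons_t\|_{\fro}^2)^{1/6}$ then yields the coefficients $8^{1/6}+2^{1/6}\le 4$ (using $C_z\ge1$ and $\ln(e+T)\ge1$) and $4^{1/6}\le 2$, matching the theorem.

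Also rename your noise energy, since $\mathbb E_T$ collides with the expectation symbol $\EEE$.
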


\begin{remark}
Theorem \ref{the:it:MuonI} shows that {OptMuon-I} improves the stochastic term from the average-smoothness rate in Theorem \ref{the:it:MuonA} to
$\tilde{\mathcal{O}}(\frac{1}{T^{1/2}}+\frac{\sigma^{1/3}}{T^{1/3}})$ under individual smoothness, while preserving the zero-noise rate \(\tilde{\mathcal O}(T^{-1/2})\).  Thus the variance-reduced momentum estimator sharpens the noise dependence without changing the Muon-style orthogonalized update.
\end{remark}

\section{Conclusion}
\label{sect:conc}


We have presented {OptMuon}, an adaptive orthogonalized optimization framework that combines Muon-style polar-factor momentum with a lagged, running-maximum corrected AdaGrad-Norm-type scalar schedule. Under the stated lower-boundedness, unbiasedness, smoothness, bounded-variance, and bounded stochastic-gradient assumptions, the proposed variants attain noise-adaptive convergence rates and recover the nearly deterministic \(\tilde{\mathcal O}(T^{-1/2})\) rate in the zero-noise regime without retuning. The analysis clarifies how closed-loop scalar adaptation can be coupled with orthogonalized momentum while preserving the geometric separation between update direction and magnitude.

\paragraph{Limitations and Future Work.} The present theory assumes exact polar-factor orthogonalization and an almost-sure bounded stochastic-gradient condition. It does not provide a bounded-gradient-free analysis, and it treats the polar factor exactly rather than quantifying finite Newton--Schulz approximation errors or the additional constants from approximate orthogonalization. Future work should also examine whether the same closed-loop scalar idea extends to composite, constrained, and blockwise training objectives.
\normalem

\bibliographystyle{plain} 
\bibliography{mybib} 

\clearpage
\appendix

{\huge Appendix}

\noi The appendix section is organized as follows.

\noi Section \ref{app:sect:note} covers notations and relevant lemmas.

\noi Section \ref{app:sect:proposed} contains proofs from Section \ref{sect:proposed}.

\noi Section \ref{app:sect:it:L} contains proofs from Section \ref{sect:it:L}.

\noi Section \ref{app:sect:it:LL} contains proofs from Section \ref{sect:it:LL}.

\section{Notations and Supporting Lemmas}\label{app:sect:note}

\subsection{Notations}

Throughout this paper, we adopt the following notational conventions. Bold lowercase letters denote vectors, and uppercase letters denote real-valued matrices.

\begin{itemize}[leftmargin=12pt,itemsep=0.2ex]

\item $[n]$: The set $\{1,2,...,n\}$.

\item $\|\mathbf{x}\|$: Euclidean norm, defined as $\|\mathbf{x}\|=\|\mathbf{x}\|_2 = \sqrt{\la \mathbf{x},\mathbf{x}\ra}$.

\item $\la \mathbf{a},\mathbf{b}\ra$ : Standard inner product, given by $\la \mathbf{a},\mathbf{b}\ra =\sum_{i}{\mathbf{a}_{i}\mathbf{b}_{i}}$.

\item $\a\leq \alpha$: For $\a\in\Rn^n$ and scalar $\alpha\in\Rn$, this means $\a_i\leq \alpha$ for all $i\in[n]$.

\item $\EEE[X]$: Expectation of the random variable $X$.

\item $\text{Var}(X)$: Variance of the random variable $X$.

\item $\{A_i\}_{i=1}^{\infty}$, $\{B_i\}_{i=1}^{\infty}$: sequences indexed by non-negative integers.

\item $\A \trans$ : the transpose of the matrix $\A$.

\item $\|\A\|_{\fro}$: Frobenius norm of the matrix $\A$.

\item $\|\A\|$: Spectral norm of the matrix $\A$.

\end{itemize}

\subsection{Supporting Lemmas}

This subsection shows a collection of useful facts and lemmas, each of which is independent of context and methodology.

\begin{lemma} \label{lemma:adaptive:important}
Let $b_1,\dots,b_T\ge 0$, and let $q\in(0,1)$. Then

\noindent (a) $\sum_{t=1}^T \frac{b_t}{1+\sum_{i=1}^t b_i} \le
\ln\Bigl(1+\sum_{t=1}^T b_t\Bigr)$.

\noindent (b) $\sum_{t=1}^T \frac{b_t}{\bigl(1+\sum_{i=1}^t b_i\bigr)^q} \le
\frac{1}{1-q}\Bigl(1+\sum_{t=1}^T b_t\Bigr)^{1-q}$.

\begin{proof}
These bounds are standard in the analysis of adaptive optimization algorithms. Part (a) generalizes Lemma 3.2 of \cite{ward2020adagrad}, and Part (b) generalizes Lemma 3 of \cite{levy2021storm}. Both statements follow directly from standard integral bounding techniques and a routine induction argument; the proofs are therefore omitted.
\end{proof}
\end{lemma}

\begin{lemma} \label{lemma:diff}
For any sequences $\{\beta_t\}_{t=1}^{T+1}$ and $\{m_t\}_{t=0}^T$, it holds that:
\beq \label{eq:core:id:0}
\ts \sum_{t=1}^T \beta_t \left( m_{t-1}^2 - m_{t}^2\right) - \sum_{t=1}^T \left( \beta_{t+1} - \beta_t \right) m_{t}^2  =  \beta_1 m_0^2 - \beta_{T+1} m_T^2. \nn
\eeq

\begin{proof}
The identity follows from $\ts \sum_{t=1}^T \beta_t \left( m_{t-1}^2 - m_t^2 \right)
- \sum_{t=1}^T \left(\beta_{t+1}-\beta_t\right)m_t^2=\ts \sum_{t=1}^T \beta_t m_{t-1}^2- \sum_{t=1}^T \beta_{t+1}m_t^2=\ts \beta_1m_0^2-\beta_{T+1}m_T^2$.
\end{proof}
\end{lemma}

\begin{lemma} \label{lemma:sum:by:parts}
Let $\{A_t\}_{t=1}^{T}$ and $\{B_t\}_{t=1}^{T+1}$ be nonnegative sequences. If $\{A_t\}_{t=1}^T$ is nondecreasing, then $\sum_{t=1}^T A_t(B_t-B_{t+1})
\le
A_T \max_{1\le t\le T} B_t$.

\begin{proof}
By summation by parts, $\sum_{t=1}^T A_t(B_t-B_{t+1})=A_1B_1-A_TB_{T+1}+\sum_{t=2}^T (A_t-A_{t-1})B_t$. Since $A_TB_{T+1}\ge 0$ and $A_t-A_{t-1}\ge 0$ for $t\ge2$, it follows that $\sum_{t=1}^T A_t(B_t-B_{t+1})
\le
A_1B_1+\sum_{t=2}^T (A_t-A_{t-1}) \max_{1\le j\le T} B_j$. Hence, $\sum_{t=1}^T A_t(B_t-B_{t+1})
\le
\Bigl(A_1+\sum_{t=2}^T (A_t-A_{t-1})\Bigr)\max_{1\le j\le T} B_j = A_T \max_{1\le j\le T} B_j$.
\end{proof}
\end{lemma}

\begin{lemma}\label{lemma:yc} Assume $y\le \ln(1+ay)$, where $a\geq 0$ and $y\geq 0$. Then $y\le 2\ln(e+a)$.

\begin{proof} The case \(a=0\) is immediate, since then \(y\le0\) and hence \(y=0\). We now assume \(a>0\).

\noi Define $C:=2\ln(e+a)$, and $\phi(s):=s-\ln(1+as)$.

\paragraph{Verifying \(\ln(1+aC)\le C\).} This is equivalent to $1+2a\ln(e+a)\le (e+a)^2$. Let $H(a):=(e+a)^2-1-2a\ln(e+a)$. Then $H'(a)=2\left(e+a-\ln(e+a)-1+\frac{e}{e+a}\right)>0$, because \(e+a\ge e\). Since \(H(0)=e^2-1>0\), we have \(H(a)>0\), and hence $\ln(1+aC)\le C$.

\paragraph{Verifying $\phi(C)\geq 0$ and $\phi(s)$ is Increasing.} Since $C\geq 2$, for all \(s\ge C\), $\phi'(s)=1-\frac{a}{1+as}=\frac{1+a(s-1)}{1+as}>0$. Thus $\phi(\cdot)$ is strictly increasing on \([C,\infty)\). Moreover, $\phi(C)=C-\ln(1+aC)\ge0$.

\paragraph{Finishing the Proof by Contradiction.} Assume, for contradiction, that the conclusion does not hold, i.e., \(y>C\). Then $y-\ln(1+ay)=\phi(y)>\phi(C)\ge0$, which contradicts the assumption \(y\le\ln(1+ay)\). Hence \(y\le C\), and the claim follows.
\end{proof}

\end{lemma}

\begin{lemma}[Weighted Self-Normalized Noise-to-Gradient Comparison]
\label{lemma:weighted:self:normalized}
Let \(\{\FF_t\}_{t\ge0}\) be the natural filtration, and suppose that
\(\X_t\) is \(\FF_{t-1}\)-measurable. Assume
\(\EEE_t[\nabla f(\X_t;\xi_t)]=\nabla f(\X_t)\), and
\(\|\nabla f(\X_t;\xi_t)\|_{\fro}\le \hG\) almost surely. Let
\(W_0,\ldots,W_T\) be nonnegative random variables such that \(W_{t-1}\) is
\(\FF_{t-1}\)-measurable and \(W_{t-1}\le W_T\) almost surely for every
\(t=1,\ldots,T\). Then
\beq
\ts
\EEE\!\left[
\sum_{t=1}^T
W_{t-1}
\frac{\|\nabla f(\X_t;\xi_t)-\nabla f(\X_t)\|_{\fro}^2}
{1+\sum_{i=1}^t\|\nabla f(\X_i;\xi_i)\|_{\fro}^2}
\right]
\le
8(1+\hG^2)\ln(e+\hG^2)\ln(e+T)\EEE[W_T].
\nn
\eeq

\begin{proof}
Let $\G_t:=\nabla f(\X_t;\xi_t)$, $\epsilons_t:=\G_t-\nabla f(\X_t)$, $g_t^2:=\|\G_t\|_{\fro}^2$, and $\GGG^+_t:=1+\sum_{i=1}^t g_i^2$.

\noi Define $R_t:=\EEE_t [\frac{g_t^2}{\GGG^+_t}]$.

\paragraph{Preliminary Bounds.} We write \(\EEE_t[\cdot]=\EEE[\cdot\mid \FF_{t-1}]\). Since \(\EEE_t[\G_t]=\nabla f(\X_t)\) and \(\|\G_t\|_{\fro}\le \hG\) almost surely, Jensen's inequality gives
\beq
\|\nabla f(\X_t)\|_{\fro}=\|\EEE_t[\G_t]\|_{\fro}\le\EEE_t\|\G_t\|_{\fro}\le\hG.\nn
\eeq
This further leads to:
\beq
\|\epsilons_t\|_{\fro}\le\|\G_t\|_{\fro}+\|\nabla f(\X_t)\|_{\fro}\le 2\hG\qquad\text{a.s.} \nn
\eeq
Since $\EEE_t[\epsilons_t]=0$, we derive:
\beq \label{eq:exp:gt2}
\ts \EEE_t[g_t^2] = \|\nabla f(\X_t)\|_{\fro}^2 + \EEE_t\|\epsilons_t\|_{\fro}^2\le \|\nabla f(\X_t)\|_{\fro}^2+4\hG^2.
\eeq
\paragraph{Lower-Bounding the Term \(\EEE_t[\tfrac{g_t^2}{\GGG^+_t}]\).}
Fix \(t\ge1\) and condition on \(\FF_{t-1}\). Then \(\GGG^+_{t-1}\ge1\),
\(\GGG^+_t=\GGG^+_{t-1}+g_t^2\), and \(\nabla f(\X_t)\) is fixed under
\(\EEE_t[\cdot]\). We derive:
\beq
\ts \|\nabla f(\X_t)\|_{\fro}^2 &=& \ts \|\EEE_t [\tfrac{\G_t}{\sqrt{\GGG^+_t}}\sqrt{\GGG^+_t} ] \|_{\fro}^2 ~~\xxxx{1}{\leq}~~ \ts \EEE_t [\frac{\|\G_t\|_{\fro}^2}{\GGG^+_t}]\EEE_t[\GGG^+_t] \nn\\
&=& \ts R_t\big(\GGG^+_{t-1}+\EEE_t[g_t^2]\big) ~~ \xxxx{2}{\leq} ~~ \ts R_t\big(\GGG^+_{t-1}+ \|\nabla f(\X_t)\|_{\fro}^2+4\hG^2 \big),\nn
\eeq
\noi where step \step{1} uses conditional Cauchy--Schwarz applied to $\EEE_t[(\G_t/\sqrt{\GGG_t^+})\sqrt{\GGG_t^+}]$; step \step{2} uses Inequality (\ref{eq:exp:gt2}). This further leads to:
\beq \label{eq:Rt:lower:self:normalized}
\ts R_t:= \EEE_t \left[\frac{g_t^2}{\GGG^+_t}\right] \ge \frac{\|\nabla f(\X_t)\|_{\fro}^2}{\GGG^+_{t-1}+\|\nabla f(\X_t)\|_{\fro}^2+4\hG^2}.
\eeq

\paragraph{Bounding the Term \(\|\nabla f(\X_t)\|_{\fro}^2\EEE_t[\tfrac1{\GGG^+_t}]\).} We derive:
\beq \label{eq:bQt:self:normalized}
&& \ts \|\nabla f(\X_t)\|_{\fro}^2 \EEE_t \left[\frac1{\GGG^+_t}\right] ~~\xxxx{1}{=}~~ \ts \frac{\|\nabla f(\X_t)\|_{\fro}^2 }{\GGG^+_{t-1}} (1-R_t)
~~\xxxx{2}{\le}~~ \frac{\|\nabla f(\X_t)\|_{\fro}^2}{\GGG^+_{t-1}} \cdot
\big(1-\frac{\|\nabla f(\X_t)\|_{\fro}^2}
{\GGG^+_{t-1}+\|\nabla f(\X_t)\|_{\fro}^2+4\hG^2} \big) \nn\\
&=& \ts \tfrac{ \|\nabla f(\X_t)\|_{\fro}^2(\GGG^+_{t-1}+4\hG^2)}{\GGG^+_{t-1}\big(
\GGG^+_{t-1}+\|\nabla f(\X_t)\|_{\fro}^2+4\hG^2\big)} ~~\le~~ (1+4\hG^2)R_t,
\eeq
\noi where step \step{1} uses $\EEE_t \left[\frac1{\GGG^+_t}\right]=\frac1{\GGG^+_{t-1}}\EEE_t \left[1-\frac{g_t^2}{\GGG^+_t}\right]=\frac{1-R_t}{\GGG^+_{t-1}}$, which can be implied by $\GGG^+_t=\GGG^+_{t-1}+g_t^2$; step \step{2} uses Inequality \eqref{eq:Rt:lower:self:normalized}; step \step{3} uses \(\GGG^+_{t-1}\ge1\) and Inequality \eqref{eq:Rt:lower:self:normalized} again.

\paragraph{Final Bound.} Using \(\|\epsilons_t\|_{\fro}^2 \le 2\|\G_t\|_{\fro}^2+2\|\nabla f(\X_t)\|_{\fro}^2\), we obtain
\beq \label{eq:noise:gradient:conditional}
\ts \EEE_t [ \tfrac{\|\epsilons_t\|_{\fro}^2}{\GGG^+_t}] &\le&\ts 2\EEE_t [\frac{g_t^2}{\GGG^+_t} ] + 2\|\nabla f(\X_t)\|_{\fro}^2 \EEE_t [\tfrac{1}{\GGG^+_t}] \nn\\
&\le& \ts \big(4+8\hG^2\big) \EEE_t [\frac{g_t^2}{\GGG^+_t} ] ~~\le~~ 8(1+\hG^2)
\EEE_t [\tfrac{\|\G_t\|_{\fro}^2}{\GGG^+_t}].\nn
\eeq
Since \(W_{t-1}\) is nonnegative and \(\FF_{t-1}\)-measurable, multiplying
\eqref{eq:noise:gradient:conditional} by \(W_{t-1}\), taking total expectation,
and summing over \(t=1,\ldots,T\), gives
\beq \label{eq:weighted:self:normalized:intermediate}
\ts && \ts \EEE \left[
\sum_{t=1}^T W_{t-1}\frac{\|\epsilons_t\|_{\fro}^2}{\GGG^+_t}
\right] ~~\le ~~ 8(1+\hG^2) \EEE \left[ \sum_{t=1}^TW_{t-1}\frac{\|\G_t\|_{\fro}^2}{\GGG^+_t}\right] \nn\\
&\xxxx{1}{\leq} & \ts 8(1+\hG^2) \EEE \left[ W_{T} \sum_{t=1}^T\frac{g_t^2}{ 1 + \sum_{i=1}^t g_i^2}\right] ~~ \xxxx{2}{\leq}~~ 8(1+\hG^2) \EEE \left[ W_{T} \ln(1+ \sum_{t=1}^T g_t^2 ) \right] \nn\\
&\xxxx{3}{\leq} &  8(1+\hG^2) \EEE \left[ W_{T} \ln(1+ T \hG^2 ) \right] ~~\xxxx{4}{\leq} ~~ 8(1+\hG^2) \ln(e+ T) \ln(e+ \hG^2) \EEE \left[ W_{T}  \right],\nn
\eeq
\noi where step \step{1} uses $W_{t-1}\le W_T$ for all $t\leq T$, and $\GGG^+_t = 1 + \sum_{i=1}^t g_i^2$; step \step{2} uses Lemma \ref{lemma:adaptive:important}(a); step \step{3} uses $g_t^2 \leq \hG^2$; step \step{4} uses $\ln(1+ab)\le \ln(e+a)\ln(e+b)$.

\end{proof}
\end{lemma}


\section{Proofs for Section \ref{sect:proposed}}
\label{app:sect:proposed}

\subsection{Proof of Lemma \ref{lemma:prop:para}}
\label{app:lemma:prop:para}

\begin{proof} Define $\rho_t := \left(
\frac{1 + \max_{1\le i\le t} g_i^2}
     {1 + \sum_{i=1}^t g_i^2}
\right)^{q}$ and $q\in(0,1)$.

\noi Define $u_t := \frac{1 + \GGG_t}{1 + \NNN_t}\geq 1$, where $\GGG_t := \sum_{i=1}^t g_i^2$ and $\NNN_t := \max_{1\le i\le t} g_i^2$. We set $\GGG_0=\NNN_0=0$.

\paragraph{Establishing the Inequality $u_{t+1} - u_t\leq 1$.} We distinguish two cases.

\emph{Case 1: $g_{t+1}^2 \le \NNN_t$.} We have:
\beq
u_{t+1} - u_t&=& \frac{1 + \GGG_{t+1}}{1 + \NNN_{t+1}} - \frac{1 + \GGG_t}{1 + \NNN_t}
\leq \frac{1 + \GGG_t + g_{t+1}^2}{1 + \NNN_t} - \frac{1 + \GGG_t}{1 + \NNN_t} \nn\\
&=& \frac{g_{t+1}^2}{1 + \NNN_t} \leq \frac{\NNN_t}{1 + \NNN_t} \leq 1.\nn
\eeq

\emph{Case 2: $g_{t+1}^2 > \NNN_t$.} We derive:
\beq
u_{t+1} - u_t &=& \frac{1 + \GGG_t + g_{t+1}^2}{1 + g_{t+1}^2} - \frac{1 + \GGG_t}{1 + \NNN_t}\nn\\
&=& \frac{(1 + \GGG_t + g_{t+1}^2)(1 + \NNN_t) - (1 + \GGG_t)(1 + g_{t+1}^2)}{(1 + g_{t+1}^2)(1 + \NNN_t)} \nn\\
&=& \frac{ (1 + \GGG_t)(\NNN_t - g_{t+1}^2) + g_{t+1}^2(1 + \NNN_t)}{(1 + g_{t+1}^2)(1 + \NNN_t)} \nn\\
&\leq & \frac{g_{t+1}^2(1 + \NNN_t)}{(1 + g_{t+1}^2)(1 + \NNN_t)} ~~\leq ~~ 1. \nn
\eeq
\noi In both cases we have $u_{t+1} - u_t \le 1$ for all $t\geq 1$.

\paragraph{Part (a-i).} Consider the concave function $f(x) = x^q$ with $x \in [1,\infty)$ and $q\in(0,1]$. Since $f'(x) = q x^{q-1}$, we derive:
\beq
u_{t+1}^{q} - u_t^{q} ~~=~~ f(u_{t+1}) - f(u_t)~~
\leq ~~ q u_{t}^{q-1}\,(u_{t+1} - u_t) ~~\xxxx{1}{\leq}~~ q,\nn
\eeq
\noi where step \step{1} uses $u_{t}^{q-1} \le 1$, which is implied by $u_t \ge 1$ and $q-1 \le 0$. We obtain
\beq
\ts \frac{1}{\rho_{t+1}} - \frac{1}{\rho_t}
= u_{t+1}^{\,q} - u_t^{\,q} \leq q. \label{eq:rho:t1:t}
\eeq

\paragraph{Part (a-ii).} Since $u_{t}\le u_{t-1}+1$ and $u_{t-1}\ge1$, we have $u_t/u_{t-1}\le2$ and therefore $\rho_{t-1}/\rho_t=(u_t/u_{t-1})^q\le2^q$.

\paragraph{Part (a-iii).} We have the following equivalence from Inequality (\ref{eq:rho:t1:t}):
\beq
\ts \rho_{t-1} \le \rho_t(1+q\rho_{t-1})  \iff \rho_{t-1}(1-q\rho_t)
\le \rho_t
\iff  \frac{\rho_{t-1}}{\rho_t} \le \frac{1}{1-q\rho_t}. \label{eq:bound:rho:t1:t}
\eeq
\noi For every \(\rho_{t}\in(0,1]\), we derive:
\beq
\ts (1-\rho_{t})\sqrt{{\rho_{t-1}} / {\rho_{t}}} ~~\xxxx{1}{\leq} ~~\frac{1-\rho_{t}}{\sqrt{1- q\rho_{t}}} ~~\xxxx{2}{\leq} ~~ \frac{1-\rho_{t}}{{1- q\rho_{t}}}  ~~ \xxxx{3}{=} ~~ 1- (1-q)\rho_{t}  - \tfrac{q (1-q) \rho_t^2}{1 - q\rho_t} ~~ \xxxx{ }{\leq}~~ 1- (1-q)\rho_{t},\nn
\eeq
where step \step{1} uses $\frac{\rho_{t-1}}{\rho_t} \le \frac{1}{1-q\rho_t}$, as shown in Inequality (\ref{eq:bound:rho:t1:t}); step \step{2} uses $\sqrt{x}\geq {x}$ for all $x = 1-q \rho_t\in(0,1]$; step \step{3} uses the following identity with $x=\rho_t$:
\beq
1-x = (1-qx) \cdot \left[1-(1-q)x\right] - q (1-q) x^2.\nn
\eeq

\paragraph{Part (a-iv).} For all $1\leq t\leq T$, we derive:
\beq
\ts \tfrac{\rho_T}{\rho_t} = \left(
\frac{1+\max_{1\le i\le T}g_i^2}{1+\sum_{i=1}^{T}g_i^2}\right)^q \cdot \left(
\frac{ 1+\sum_{i=1}^{t}g_i^2}{1+\max_{1\le i\le t}g_i^2}\right)^q \leq \left(
\frac{1+\NNN_T}{1+\NNN_t}\right)^q \leq \left(
1+\NNN_T\right)^q.\nn
\eeq

\paragraph{Part (b-i).} Let $z_t:=\ln \rho_t$ and $a_+:=\max\{a,0\}$. Note that $(\rho_t-\rho_{t+1})_+=(\rho_{t+1}-\rho_t)_+ + \rho_t-\rho_{t+1}$. Summing this inequality over \(t=0,\ldots,T-1\) gives
\beq \label{eq:bound:aim}
\ts && \ts \sum_{t=0}^{T-1}(\rho_t-\rho_{t+1})_+ = \ts \sum_{t=0}^{T-1}(\rho_{t+1}-\rho_t)_+  +  \rho_0-\rho_T~~\xxxx{1}{\leq} 1 + \sum_{t=0}^{T-1}(\rho_{t+1}-\rho_t)_+ \nn\\
&\xxxx{2}{\leq} & \ts 1 + \sum_{t=0}^{T-1}( e^{z_{t+1}} - e^{z_t} )_+ ~~\xxxx{3}{\leq}~~ 1 + \sum_{t=0}^{T-1}( {z_{t+1}} - {z_t} )_+ ~~\xxxx{4}{\leq} ~~ \ts 1 + q\sum_{t=0}^{T-1}\ln\frac{1+\NNN_{t+1}}{1+\NNN_t}\nn\\
&=  & \ts  1 + q\ln(1+\NNN_T) ~~\leq~~ 2 \ln (e+\NNN_T),\nn
\eeq
\noi where step \step{1} uses $\rho_0=1$ and $\rho_T\ge0$; step \step{2} uses the definition of $z_t:=\ln \rho_t$;  step \step{3} uses the fact that $e^x$ is $1$-Lipschitz on $(-\infty,0]$; step \step{4} uses $z_t = q\left(\ln(1+\NNN_t)-\ln(1+\GGG_t)\right)\le0$ and $z_{t+1}-z_t
=q\ln\frac{1+\NNN_{t+1}}{1+\NNN_t}
-q\ln\frac{1+\GGG_{t+1}}{1+\GGG_t}\leq q\ln\frac{1+\NNN_{t+1}}{1+\NNN_t}$, leading to $(z_{t+1}-z_t)_+ \le q\ln\frac{1+\NNN_{t+1}}{1+\NNN_t}$.

\paragraph{Part (b-ii).} Let $y:=\tfrac{u_t}{u_{t-1}}$. We derive:
\beq
&&\ts \sum_{t=0}^{T} |\frac{\rho_{t-1}}{\rho_t}-1 | ~~\xxxx{1}{=}~~ \ts \sum_{t=1}^{T}
|(\tfrac{u_t}{u_{t-1}})^q-1| ~~\xxxx{2}{=}~~ \sum_{t=1}^{T}
|y^q-1| ~~\xxxx{3}{\leq}~~2 \sum_{t=1}^{T}
 |\ln(y)| \nn\\
& = & \ts 2 \sum_{t=1}^{T} |\ln(u_t) - \ln(u_{t-1})|~~=~~2 \sum_{t=1}^{T} |\ln( \tfrac{1+\GGG_t}{1+\GGG_{t-1}})-\ln( \tfrac{1+\NNN_t}{1+\NNN_{t-1}})| \nn\\
& \xxxx{4}{\leq} & \ts 2 \sum_{t=1}^{T} \ln( \tfrac{1+\GGG_t}{1+\GGG_{t-1}})+\ln( \tfrac{1+\NNN_t}{1+\NNN_{t-1}})~~=~~2 \big(\ln(1+\GGG_T) + \ln(1+\NNN_T) \big) \leq 4 \ln(1+\GGG_T),\nn
\eeq
\noi where step \step{1} uses $\rho_t=u_t^{-q}$ for every $t\ge0$, and $\rho_{-1}=\rho_0=1$; step \step{2} uses the definition of $y$; step \step{3} uses the fact that $0<y:=\frac{u_t}{u_{t-1}}\le 2$ (which is implied by $u_{t+1} - u_t\leq 1$), and $|y^q-1|\le 2|\ln y|$ for any \(y\in(0,2]\) and \(q\in(0,1)\); step \step{4} uses $|\ln(a)-\ln(b)|\leq \ln(a) + \ln(b)$ for all $a,b\geq 1$.

\end{proof}

\section{Proofs for Section \ref{sect:it:L}}
\label{app:sect:it:L}

\subsection{Proof of Lemma \ref{lemma:OptMuonA:Prel:Bound}}
\label{app:lemma:OptMuonA:Prel:Bound}

\begin{proof} Define $\V_t:=\alpha_t\M_t$, and $\S_t:=\V_t-\nabla f(\X_t)$. Define $\bar{\gamma}_t:=\gamma_t/\alpha_t$.

\paragraph{Part (a).} For all $1\leq t\leq T$, we derive:
\beq
\ts \tfrac{\alpha_T}{\alpha_t} ~~=~~ \tfrac{\rho_{T-1}}{\rho_{t-1}} ~~=~~ \big(
\frac{1+\max_{1\le i\le T-1}g_i^2}{1+\sum_{i=1}^{T-1}g_i^2}\big)^q \cdot \big(
\frac{ 1+\sum_{i=1}^{t-1}g_i^2}{1+\max_{1\le i\le t-1}g_i^2}\big)^q  ~~\leq~~ \big(
1+\hG^2 \big)^{1/2}~~:=~~\varpi. \label{eq:rho:tk}
\eeq

\paragraph{Part (b).} Define $\bar{\gamma}_t :=\min\big(\alpha_t,\beta_t\big)$, where {$\beta_t:=\big(1 + \sum_{j=1}^t \alpha_j^{-1}\|\V_j\|_{\fro}^2\big)^{-1/2}=\big(1 + \sum_{j=1}^t \alpha_j \|\M_j\|_{\fro}^2\big)^{-1/2}$}. We have:
\beq
\tfrac{\bar{\gamma}_T}{\bar{\gamma}_t} ~~=~~ \tfrac{\min\big(\alpha_T,\beta_T\big)}{\min\big(\alpha_t,\beta_t\big)} ~~\xxxx{1}{\leq}~~ \max\big( \tfrac{\alpha_T}{\alpha_t} , \tfrac{\beta_T}{\beta_t}\big) ~~\xxxx{2}{\leq} ~~ \max\big( \tfrac{\alpha_T}{\alpha_t}, 1 \big) ~~\xxxx{3}{\leq}~~ \varpi.
\eeq
\noi where step \step{1} uses $\tfrac{\min(a,b)}{\min(c,d)}\leq \max(\tfrac{a}{c},\tfrac{b}{d})$ for all $a,b\geq 0$ and $c,d>0$; step \step{2} uses the fact that the sequence $\{\beta_t\}$ is non-increasing; step \step{3} uses $\varpi\geq 1$, and Inequality (\ref{eq:rho:tk}).

\paragraph{Part (c).} Using the update rule $\M_t = \alpha_t \big(\tfrac{\G_t}{\alpha_t}\big) + (1-\alpha_t)\M_{t-1}$ and the convexity of $\|\cdot\|_{\fro}$, we have:
\beq
\|\M_t\|_{\fro} ~~\leq~~ \max \big( \|\tfrac{\G_t}{\alpha_t}\|_{\fro}, \|\M_{t-1}\|_{\fro}\big) ~~\leq~~ \max_{i=1}^t \|\tfrac{\G_i}{\alpha_i}\|_{\fro},\nn
\eeq
\noi where the last inequality uses the standard initialization $\M_0=\zero$. This further leads to:
\beq
\|\V_t\|_{\fro} = \alpha_t\|\M_t\|_{\fro}  ~~\leq~~   \alpha_t \cdot \big(\max_{i=1}^t \|\tfrac{\G_i}{\alpha_i}\|_{\fro}\big) ~~\leq ~~ \varpi \hG~~ \leq ~~\varpi^2.\nn
\eeq

\paragraph{Part (d).} We derive:
\beq
\|\S_t\|_{\fro} ~~\leq ~~  \|\nabla f(\X_t)\|_{\fro} + \|\V_t\|_{\fro} ~~\leq~~\hG + \varpi \hG ~~\leq 2\varpi \hG ~~\leq~~  2\varpi^2.\nn
\eeq

\paragraph{Part (e).} We derive:
\beq
\|\epsilons_t\|_{\fro} ~~= ~~  \| \G_t - \nabla f(\X_t)\|_{\fro}  ~~\leq~~  2\hG.\nn
\eeq

\end{proof}

\subsection{Proof of Lemma \ref{lemma:OptMuonA:sumV:byG}}
\label{app:lemma:OptMuonA:sumV:byG}

\begin{proof} Define \(\V_t:=\alpha_t\M_t\), \(\G_t:=\nabla f(\X_t;\xi_t)\), and set
\(\M_0=\zero\).

\noi Let \(v_t:=\|\V_t\|_{\fro}\), \(g_t:=\|\G_t\|_{\fro}\), and $\kappa := 2(1+\hG)$.

\noi Consider the non-monotone coefficient  $\alpha_{t+1}=\rho_t=
\left(
\frac{1+\max_{1\le i\le t}g_i^2}
     {1+\sum_{i=1}^t g_i^2}
\right)^{q}$, where $q=1/2$,

\paragraph{Expansion of the scaled momentum.} Using the momentum recursion $\M_t=\G_t+(1-\alpha_t)\M_{t-1}$ and the initialization \(\M_0=\zero\), we obtain
\beq
\ts \M_t = \sum_{i=1}^t \left(\prod_{j=i+1}^t(1-\alpha_j)\right)\G_i. \nn
\eeq
Multiplying both sides by \(\alpha_t\) gives
\beq \label{eq:V:weighted:expansion}
\ts \V_t = \sum_{i=1}^t a_{t,i}\G_i, \qquad a_{t,i}:= \alpha_t\prod_{j=i+1}^t(1-\alpha_j) \geq 0.
\eeq

\paragraph{Bounding the Row Sums.} We derive:
\beq \label{eq:row:sum:a}
\ts \sum_{i=1}^t a_{t,i} &=& \ts \sum_{i=1}^t \alpha_t\prod_{j=i+1}^t(1-\alpha_j)~~\xxxx{1}{\leq}~~ \ts (1+\hG) \sum_{i=1}^t \alpha_i\prod_{j=i+1}^t(1-\alpha_j) \nn\\
&=& \ts (1+\hG) \left(1-\prod_{j=1}^t(1-\alpha_j)\right) \le 1+\hG,
\eeq
\noi where step \step{1} uses Lemma \ref{lemma:prop:para} that $\alpha_t\le (1+\hG)\alpha_i$ for all $i$ with $1\leq i\leq t$.

\paragraph{Bounding the Column Sums.} Fix \(i\in\{1,\ldots,T\}\). We derive
\beq \label{eq:column:sum:a}
\ts \sum_{t=i}^T a_{t,i} &=& \ts \alpha_i + \sum_{t=i+1}^T\alpha_t \prod_{j=i+1}^t(1-\alpha_j) ~~=~~ \ts \alpha_i + \sum_{t=i+1}^T \alpha_t(1-\alpha_t) \prod_{j=i+1}^{t-1}(1-\alpha_j)\nn\\
&\le& \ts 1+ \sum_{t=i+1}^T \alpha_t \prod_{j=i+1}^{t-1}(1-\alpha_j) ~~=~~ \ts 1+ \left(1-\prod_{j=i+1}^T(1-\alpha_j)\right) \le 2.
\eeq

\paragraph{Bounding the Term $\sum_{t=1}^T \|\V_t\|_{\fro}^2$.} We derive from \eqref{eq:V:weighted:expansion}:
\beq
\ts \|\V_t\|_{\fro}^2 &=& \ts \left\| \sum_{i=1}^t a_{t,i}\G_i\right\|_{\fro}^2 ~~\xxxx{1}{\leq}~~ \ts \left(\sum_{i=1}^t a_{t,i}\right) \left(\sum_{i=1}^t a_{t,i}\|\G_i\|_{\fro}^2\right) ~~\xxxx{2}{\leq}~~ \ts (1+\hG)\sum_{i=1}^t a_{t,i}g_i^2.
\nn
\eeq
\noi where step \step{1} uses the Cauchy--Schwarz Inequality; step \step{2} uses Inequality \eqref{eq:row:sum:a}. Summing the resulting inequality over \(t=1,\ldots,T\) yields:
\beq
\ts \sum_{t=1}^T\|\V_t\|_{\fro}^2 &\le& \ts (1+\hG)
\sum_{t=1}^T\sum_{i=1}^t a_{t,i}g_i^2 ~~=~~ \ts (1+\hG) \sum_{i=1}^T g_i^2 \sum_{t=i}^T a_{t,i} ~~\xxxx{1}{\le}~~ \ts \kappa\sum_{i=1}^T g_i^2,
\nn
\eeq
\noi where step \step{1} uses Inequality \eqref{eq:column:sum:a}, and $\kappa := 2(1+\hG)$.

\end{proof}

\subsection{Proof of Lemma \ref{lemma:VR:bound:A}}
\label{app:lemma:VR:bound:A}

\begin{proof} Define $\mathbf S_t := \V_t - \nabla f(\mathbf X_t)$, where $\V_t := \alpha_t \mathbf M_t$. Define $m_t:=\|\M_t\|_{\fro}$, and $g_t:=\|\G_t\|_{\fro}$.

\noi Define $\epsilons_t := \mathbf G_t - \nabla f(\mathbf X_t)$, and $\mathbf A_t:=(1-\alpha_{t})\mathbf S_{t-1} +\alpha_{t}\epsilons_t$.

\noi Define $\D_t:=(1-\alpha_t) (\nabla f(\X_{t-1}) - \nabla f(\X_{t}))$, and $\J_t:=(1-\alpha_t) (\alpha_t-\alpha_{t-1})\M_{t-1}$.

\noi Define $z_t:=2 (1-\alpha_{t}) \alpha_{t} (1+\alpha_t) \la \S_{t-1} ,\epsilons_{t}\ra$, and $\delta:= n L^2\theta^2$.

\paragraph{Difference-based representation of $\S_t$.} We derive the following results:
\beq
\mathbf S_t  &\overset{\step{1}}{=}& \alpha_t\mathbf M_t-\nabla f(\mathbf X_t)   \nn\\
&\overset{\step{2}}{=}& \underbrace{\ts (1-\alpha_{t})\mathbf S_{t-1}
+\alpha_{t}\epsilons_t }_{:=\A_t} +\alpha_t(1-\alpha_t)\mathbf M_{t-1}
-\nabla f(\mathbf X_t) - (1-\alpha_t) \S_{t-1}  + \alpha_t \nabla f(\X_t)  \nn \\
&\overset{\step{3}}{=}& \A_t  + \underbrace{\ts (1-\alpha_t)(\alpha_t- \alpha_{t-1})\mathbf M_{t-1}  }_{:=\mathbf J_t} +  \underbrace{\ts (1-\alpha_t) ( \nabla f(\X_{t-1}) - \nabla f(\mathbf X_t)  ) }_{:=\mathbf D_t},\nn
\eeq
\noi where $\step{1}$ uses $\mathbf S_t:=\V_t-\nabla f(\mathbf X_t)$ and $\V_t:=\alpha_t\mathbf M_t$; step \step{2} uses the momentum recursion $\mathbf M_t=\mathbf G_t+(1-\alpha_t)\mathbf M_{t-1}$, and $\mathbf G_t=\nabla f(\mathbf X_t)+\boldsymbol\epsilon_t$; step \step{3} uses the definition of $\S_t$.

\paragraph{Bounding the Term $\|\D_t\|_{\fro}^2$.} By the $L$-smoothness of $f(\cdot)$ and the update rule of $\X_t$, we have
\beq \label{eq:D:bound:scaled}
\ts && \ts \|\D_t\|_{\fro}^2 ~~\leq~~ \|\nabla f(\X_{t-1})-\nabla f(\X_t)\|_{\fro}^2 ~~\overset{\step{1}}{\leq}~~ L^2\|\X_{t-1}-\X_t\|_{\fro}^2 \nn\\
&\overset{\step{2}}{=}& \ts L^2\theta^2\gamma_{t-1}^2m_{t-1}^2\|\Orth(\M_{t-1})\|_{\fro}^2~~\overset{\step{3}}{\leq}~~ \ts n L^2\theta^2 \gamma_{t-1}^2m_{t-1}^2  = \delta \gamma_{t-1}^2m_{t-1}^2,
\eeq
\noi where step \step{1} uses the $L$-smoothness of $f(\cdot)$; step \step{2} uses $\X_{t+1}=\X_{t}-\theta\gamma_{t}m_{t}\Orth(\M_{t})$; step \step{3} uses $\|\Orth(\M_{t-1})\|_{\fro}^2\leq n$.

\paragraph{Bounding the Term $\|\A_t+\D_t\|_{\fro}^2$.} We derive:
\beq \label{eq:bound:ADAD}
&& \|\A_t+\D_t\|_{\fro}^2 ~~\xxxx{1}{\leq}~~ \ts (1+\tfrac{1}{\alpha_t})\|\D_t\|_{\fro}^2 + (1+\alpha_t)\|\A_t\|_{\fro}^2  ~~\leq~~ \tfrac{2}{\alpha_t} \|\D_t\|_{\fro}^2 +  (1+\alpha_t)\|\A_t\|_{\fro}^2  \nn\\
&=& \ts \tfrac{2}{\alpha_t}\|\D_t\|_{\fro}^2 + (1+\alpha_t) \big( (1-\alpha_{t})^2\|\mathbf S_{t-1}\|_{\mathrm F}^2 + \alpha_{t}^2 \|\epsilons_t\|_{\mathrm F}^2 + 2 (1-\alpha_{t}) \alpha_{t}\la \S_{t-1} ,\epsilons_{t}\ra \big)  \nn\\
&\xxxx{2}{\leq}& \ts   \tfrac{2}{\alpha_t}\|\D_t\|_{\fro}^2 +   (1-\alpha_{t}) \|\mathbf S_{t-1}\|_{\mathrm F}^2 + 2 \alpha_{t}^2 \|\epsilons_t\|_{\mathrm F}^2   +z_t   \nn\\
&\xxxx{3}{\leq} & \ts    \tfrac{2}{\alpha_t} n L^2 \theta^2 \gamma_{t-1}^2 m_{t-1}^2 +  (1-\alpha_{t}) \|\mathbf S_{t-1}\|_{\mathrm F}^2 + 2 \alpha_{t}^2 \|\epsilons_t\|_{\mathrm F}^2 + z_t   \nn\\
&\leq & \ts  \tfrac{2}{\alpha_{t-1}} \tfrac{3}{2} \cdot \delta \gamma_{t-1}^2 m_{t-1}^2 +  (1-\alpha_{t}) \|\mathbf S_{t-1}\|_{\mathrm F}^2 + 2 \alpha_{t}^2 \|\epsilons_t\|_{\mathrm F}^2 +z_t,
\eeq
\noi where step \step{1} uses Young's inequality \(\|\A_t+\D_t\|_{\fro}^2\le (1+\alpha_t)\|\A_t\|_{\fro}^2+(1+\alpha_t^{-1})\|\D_t\|_{\fro}^2\); step \step{2} uses $\big(1+ a \big)(1-a) \le 1$ for all $a\in[0,1]$; step \step{3} uses Inequality (\ref{eq:D:bound:scaled}); step \step{4} uses $\alpha_{t-1}\leq (1+\frac{1}{2})\alpha_t$.

\paragraph{Bounding the Term $\|\S_t\|_{\fro}^2$.} We derive:
\beq
\|\S_t\|_{\fro}^2 &=& \ts \la \S_t, \A_t+\D_t \ra + \la \S_t, \J_t \ra ~~\xxxx{1}{\leq} ~~ \tfrac{1}{2}\|\S_t\|_{\fro}^2 + \tfrac{1}{2}\|\A_t+\D_t\|_{\fro}^2 + \|\S_t\|_{\fro}\|\J_t\|_{\fro} \nn\\
&\leq & \ts \|\A_t+\D_t\|_{\fro}^2 + 2 \|\S_t\|_{\fro}\|\J_t\|_{\fro} ~~\xxxx{2}{\leq}~~ \|\A_t+\D_t\|_{\fro}^2 + 4 \varpi^2\|\J_t\|_{\fro}   \nn\\
&\xxxx{3}{\leq} & \ts \|\A_t+\D_t\|_{\fro}^2 + 4 \varpi^2 \tfrac{|\alpha_t-\alpha_{t-1}|}{\alpha_{t-1}} \|\V_{t-1}\|_{\fro}   ~~\xxxx{4}{\leq} ~~ \|\A_t+\D_t\|_{\fro}^2 + 4 \varpi^4 \cdot \tfrac{|\alpha_t-\alpha_{t-1}|}{\alpha_{t-1}} \nn\\
&\xxxx{5}{\leq}& \ts (1-\alpha_{t}) \|\mathbf S_{t-1}\|_{\mathrm F}^2 + \underbrace{\ts z_t + 3 \delta \gamma_{t-1}^2 m_{t-1}^2/\alpha_{t-1} +  2 \alpha_{t}^2 \|\epsilons_t\|_{\mathrm F}^2  + 4 \varpi^5 \cdot \tfrac{|\alpha_t-\alpha_{t-1}|}{\alpha_{t}}}_{:=B_t}, \nn
\eeq
\noi where step \step{1} uses Young's inequality; step \step{2} uses Lemma~\ref{lemma:OptMuonA:Prel:Bound} that $\|\S_t\|_{\fro}\le 2\varpi^2$; step \step{3} uses the definition $\J_t=(1-\alpha_t)(\alpha_t-\alpha_{t-1})\M_{t-1}=(1-\alpha_t)(\alpha_t-\alpha_{t-1})\frac{\V_{t-1}}{\alpha_{t-1}}$, and the bound $1-\alpha_t\le1$; step \step{4} uses $\|\V_{t-1}\|_{\fro}\le \varpi^2$; step \step{5} uses Inequality (\ref{eq:bound:ADAD}).

\end{proof}

\subsection{Proof of Lemma \ref{lemma:suff:descent:A}}
\label{app:lemma:suff:descent:A}

\begin{proof} Let $\bar\gamma_t:=\gamma_t/\alpha_t$, $\V_t=\alpha_t\M_t$, and $\S_t:=\V_t-\nabla f(\X_t)$.

\noi By average \(L\)-smoothness,
\beq
f(\X_{t+1})-f(\X_t) &\xxxx{}{\leq}& \tfrac{L}{2}\|\X_{t+1}-\X_t\|_{\fro}^2+\langle \nabla f(\X_t),\X_{t+1}-\X_t\rangle \nn\\
&\xxxx{1}{=}&\tfrac{L\theta^2}{2}\bar\gamma_t^2\|\V_t\|_{\fro}^2\|\Orth(\V_t)\|_{\fro}^2-\theta\bar\gamma_t\|\V_t\|_{\fro}\langle \Orth(\V_t),\nabla f(\X_t)\rangle
\nn\\
&\xxxx{2}{\leq}&\tfrac{L\theta^2}{2}\bar\gamma_t^2\|\V_t\|_{\fro}^2\cdot n-\theta\bar\gamma_t\|\V_t\|_{\fro}\langle \Orth(\V_t),\V_t-\S_t\rangle\nn\\
&\xxxx{3}{\leq}&\tfrac{Ln\theta^2}{2}\bar\gamma_t^2\|\V_t\|_{\fro}^2-\theta\bar\gamma_t\|\V_t\|_{\fro}^2+\theta\bar\gamma_t\|\V_t\|_{\fro}\|\S_t\|_*
\nn\\
&\xxxx{4}{\leq}&\tfrac{Ln\theta^2}{2}\bar\gamma_t^2\|\V_t\|_{\fro}^2-\tfrac{\theta}{2}\bar\gamma_t\|\V_t\|_{\fro}^2+\tfrac{\theta n}{2}\bar\gamma_t\|\S_t\|_{\fro}^2,
\eeq
\noi where step \step{1} uses $\X_{t+1}=\X_t-\theta\bar\gamma_t\|\V_t\|_{\fro}\Orth(\V_t)$ since $\alpha_t>0$ and $\Orth(\M_t)=\Orth(\V_t)$; step \step{2} uses $\|\Orth(\V_t)\|_{\fro}^2\le n$, and $\S_t:=\V_t-\nabla f(\X_t)$; step \step{3} uses $\langle\Orth(\V_t),\V_t\rangle=\|\V_t\|_*\ge\|\V_t\|_{\fro}$, and $\la \Orth(\V_t),\S_t \ra \leq \|\S_t\|_*$; step \step{4} uses $\|\S_t\|_*^2\le n\|\S_t\|_{\fro}^2$, and $ab\le a^2/2+b^2/2$. Rearranging gives the desired result.
\end{proof}

\subsection{Proof of Lemma \ref{lemma:bcd:MuonA}}
\label{app:lemma:bcd:MuonA}

\begin{proof} Set $\gamma_0=0$, $\V_0=\M_0=\zero$, and $v_0=m_0=0$.

\noi Let $r_t:=\frac{v_t^2}{\alpha_t}$, $H_t:=\sum_{i=1}^t g_i^2$, and $A_t:=1+\sum_{i=1}^t r_i$. Define $\S_t:=\V_t-\nabla f(\X_t)$.

\noi Set $B_t:=  2 \alpha_{t}^2\|\epsilons_t\|_{\fro}^2 + 2 \alpha_{t} (1-\alpha_{t}) (1+\alpha_{t}) \la \S_{t-1} ,\epsilons_{t}\ra + 4 \varpi^5 |\tfrac{\alpha_{t-1}}{\alpha_{t}}-1| + 3 \delta \frac{\gamma_{t-1}^2m_{t-1}^2}{ \alpha_{t-1}}$.

\paragraph{Bounding the Term $\EEE[\sum_{t=1}^T\alpha_t^2 \|\epsilons_t\|_{\fro}^2]$.} We have:
\beq \label{eq:abcd:alpha-g}
\ts \EEE[\sum_{t=1}^T\alpha_t^2 \|\epsilons_t\|_{\fro}^2] &\xxxx{1}{\le}&\ts \EEE[\tfrac{9}{4}\sum_{t=1}^T\alpha_{t+1}^2 \|\epsilons_t\|_{\fro}^2] ~~\xxxx{2}{\le} ~~ \ts  \EEE[\tfrac{9}{4} (1+\hG^2) \sum_{t=1}^T \tfrac{ \|\epsilons_t\|_{\fro}^2}{1+\GGG_t} ] \nn\\
&\xxxx{3}{\leq}& \ts \EEE[\tfrac{9}{4} (1+\hG^2)^2 8\sum_{t=1}^T \tfrac{\|\G_t\|_{\fro}^2}{1+\GGG_t} ] ~~\leq ~~ \ts  \EEE[36 (1+\hG^4) \ln(1+\GGG_T)], ~~~~
\eeq
\noi where step \step{1} uses $\alpha_t \leq \alpha_{t+1} (1+q)$; step \step{2} uses $\alpha_{t+1}^2\leq (1+\hG^2)/(1+\GGG_t)$; step \step{3} uses Lemma \ref{lemma:weighted:self:normalized} with $W_0=1$.

\paragraph{Bounding the Term $\sum_{t=1}^T\alpha_{t-1}^2v_{t-1}^2$.} We have:
\beq \label{eq:alpha2:v2}
&&\ts \sum_{t=1}^T\alpha_{t-1}^2v_{t-1}^2 ~~\xxxx{1}{\leq}~~ \ts \sum_{t=1}^T\alpha_{t}^2v_{t}^2 ~~\xxxx{2}{\leq}~~\ts \tfrac{9}{4}\sum_{t=1}^T\alpha_{t+1}^2v_{t}^2 ~~\xxxx{3}{\leq}~~\ts \tfrac{9}{4} (1+\hG^2) \sum_{t=1}^T \tfrac{v_{t}^2}{1+\GGG_t} \nn\\
&\xxxx{4}{\leq}&\ts \tfrac{9}{4} (1+\hG^2) \cdot \kappa \sum_{t=1}^T \tfrac{v_{t}^2}{1+\VVV_t}  ~~\xxxx{5}{\leq}~~\ts \tfrac{9 \kappa }{4} (1+\hG^2) \cdot  \ln(1+ \VVV_T)\nn\\
&\xxxx{6}{\leq}&\ts \tfrac{9 \kappa}{4} (1+\hG^2) \cdot \ln(1+ \kappa \GGG_T) ~~{\leq}~~ \tfrac{9\kappa}{4} \kappa^2 \ln(e+\kappa)\cdot \ln(e+ \GGG_T) ,
\eeq
\noi where step \step{1} uses $v_0=0$; step \step{2} uses $\alpha_t \leq \alpha_{t+1} (1+q)$; step \step{3} uses $\alpha_{t+1}^2\leq (1+\hG^2)/(1+\GGG_t)$; step \step{4} uses $\VVV_T\leq \kappa \GGG_T$, as established in Lemma~\ref{lemma:OptMuonA:sumV:byG}; step \step{5} uses Lemma~\ref{lemma:adaptive:important}(a); step \step{6} uses $\VVV_T\leq \kappa \GGG_T$.

\paragraph{Bounding the Term $\sum_{t=1}^T\gamma_{t-1}^2m_{t-1}^2/\alpha_{t-1}$.} We have:
\beq \label{eq:gamma:m:alpha}
&& \ts \sum_{t=1}^T \gamma_{t-1}^2m_{t-1}^2/\alpha_{t-1}~~\xxxx{1}{\leq}~~ \ts \sum_{t=1}^T \gamma_{t}^2m_{t}^2/\alpha_{t} ~~\xxxx{2}{\leq}~~\ts  \sum_{t=1}^T \tfrac{\alpha_{t} m_{t}^2}{1 + \sum_{i=1}^t \alpha_{i} m_{i}^2 } \nn\\
&\xxxx{3}{\leq}& \ts \ln (1 + \sum_{t=1}^T \alpha_{t} m_{t}^2 ) ~~\xxxx{4}{\leq}~~ \ts \ln (1 + \varpi \kappa \GGG_T \alpha_{T}^{-1} ) ~~\xxxx{5}{\leq}~~ \ts \ln (1 + \varpi \kappa (1+\GGG_T)^{3/2})\nn\\
&\xxxx{}{\leq}& \ts \ln (2 + \varpi \kappa  \GGG_T^{3/2})
~~\leq ~~ 2 \ln (e + \varpi \kappa)  \ln(e+\GGG_T),
\eeq
\noi where step \step{1} uses $m_0=0$; step \step{2} uses the definition of $\gamma_t$ that $\gamma_t\leq \alpha_t \big(1  + \sum_{j=1}^t \alpha_j m_j^2 \big)^{-1/2}$; step \step{3} uses Lemma~\ref{lemma:adaptive:important}(a); step \step{4} uses $\sum_{t=1}^T \alpha_t m_t^2 = \sum_{t=1}^T \tfrac{1}{\alpha_t} v_t^2 \leq \tfrac{\varpi}{\alpha_T} \sum_{t=1}^T v_t^2 \leq \kappa \varpi \alpha_T^{-1} \GGG_T$, which can be implied by $\VVV_T\leq \kappa \GGG_T$; step \step{5} uses $\alpha_{T}^{-1}\leq (1+\GGG_{T})^{1/2}$; step \step{6} uses $\ln(e+ab)\leq \ln(e+a)\ln(e+b)$ for all $a,b\geq 0$.

\paragraph{Part (a): Bounding the Term $\EEE[ \sum_{t=1}^T B_t]$.} We derive:
\beq
\ts \EEE[ \sum_{t=1}^T B_t] & \xxxx{1}{=}& \ts \sum_{t=1}^T \EEE[2\alpha_t^2\|\epsilons_t\|_{\fro}^2    +  4 \varpi^5 |\tfrac{\alpha_{t-1}}{\alpha_{t}}-1| + 3\delta \frac{\gamma_{t-1}^2m_{t-1}^2}{ \alpha_{t-1}}  ] \nn\\
& \xxxx{2}{\leq}& \ts \left(36 (1+\hG^4) + 16 \varpi^5 + 9\delta \ln (e + \varpi \kappa) \right)\cdot \EEE[\ln(e+\GGG_T)] \nn\\
& \xxxx{3}{\leq}& \ts \underbrace{\ts \left( 52 \varpi^5 + 9\delta \ln (e + \varpi \kappa) \right)\cdot \ln(e+\hG^2) }_{:=C_b}\cdot \ln(e+T), \nn
\eeq
\noi where step \step{1} uses the fact that the martingale term has zero expectation because $\alpha_t$ and $\S_{t-1}$ are $\FF_{t-1}$-measurable and
$\EEE[\epsilons_t\mid\FF_{t-1}]=0$; step \step{2} uses Inequalities (\ref{eq:abcd:alpha-g}) and (\ref{eq:gamma:m:alpha}), and the second inequality in Lemma \ref{lemma:prop:para}(b); step \step{3} uses Assumption \ref{ass:BG} to get $\GGG_T\le T\hG^2$ a.s., leading to $\EEE[\ln(e+\GGG_T)]\le\ln(e+T\hG^2)\le\ln(e+T)\ln(e+\hG^2)$, and the fact that $1+x^4\leq (1+x^2)^{5/2}$ with $x=\hG$.

\paragraph{Part (b). Bounding the Term $ \EEE[\sum_{t=1}^T \alpha_t s_t^2]$.} From Lemma \ref{lemma:VR:bound:A}, we derive: $s_t^2 \le (1-\alpha_t) s_{t-1}^2 + B_t$, leading to the following inequality:
\beq
\ts \alpha_t s_{t}^2 \le (1-\alpha_t) (s_{t-1}^2 - s_t^2) + B_t.\nn
\eeq
\noi Summing over $t=1,\ldots,T$ and taking expectation, we obtain
\beq
\ts \EEE[\sum_{t=1}^T \alpha_t s_{t}^2] &\leq & \ts \EEE[\sum_{t=1}^T (1-\alpha_t) (s_{t-1}^2 - s_t^2) + B_t ] \nn\\
&= & \ts \EEE[\big( \sum_{t=1}^T B_t \big) + (1 - \alpha_1) s_0^2 - (1 - \alpha_T) s_T^2 + \big( \sum_{t=1}^{T-1} (\alpha_t-\alpha_{t+1}) s_{t}^2 \big)  ] \nn\\
&\xxxx{1}{\leq} & \ts C_b \ln(e+T) + \EEE[ \big( \sum_{t=1}^{T-1} (\alpha_t-\alpha_{t+1}) s_{t}^2 \big) ] \nn\\
&\xxxx{}{\leq} & \ts C_b \ln(e+T) + \EEE[ \big( \max_{t=1}^T s_{t}^2 \big)\big( \sum_{t=1}^{T-1} \max(0,\alpha_t-\alpha_{t+1}) \big) ] \nn\\
&\xxxx{2}{\leq}&\ts C_b \ln(e+T) + \EEE[ 4 \varpi^4 \cdot 2 \ln (e + \hG^2)  ] \nn\\
&\xxxx{3}{\leq}& \ts \big( C_b + 8 \varpi^4 \ln (e+\hG^2) \big) \ln(e+T) ~~\leq ~~ \underbrace{2 C_b}_{:=C_c} \cdot \ln(e+T), \nn
\eeq
\noi where step \step{1} uses $s_T^2\geq 0$; step \step{2} uses $s_t^2\leq 4\varpi^4$, and Lemma \ref{lemma:prop:para}(b-i) that $\sum_{t=0}^{T-1} \max(0,\rho_t-\rho_{t+1}) \leq 2 \ln(e+\NNN_T)$; step \step{3} uses $\ln(e+T)\geq 1$. Since $\bar{\gamma}_t:=\tfrac{\gamma_t}{\alpha_t}\leq \alpha_t$, we directly conclude that $\EEE[\sum_{t=1}^T \bar{\gamma}_t s_t^2]\leq \EEE[\sum_{t=1}^T \alpha_t s_t^2]\leq C_c\ln(e+T)$.

\paragraph{Part (c): Bounding the Term $\sum_{t=1}^T \bar{\gamma}_t^2 v_t^2$.} We derive:
\beq
&& \ts \sum_{t=1}^T \bar{\gamma}_t^2 v_t^2 ~~ = ~~ \ts \sum_{t=1}^T \tfrac{\gamma_t^2}{\alpha_t^2} v_t^2 ~~\xxxx{1}{\leq} ~~ \ts \sum_{t=1}^T \alpha_t^2 v_t^2 \nn\\
&\xxxx{2}{\leq}& \tfrac{9\kappa^3}{4} \ln(e+\kappa) \ln(e+T\hG^2) ~~ \xxxx{3}{\leq}~~ \underbrace{\ts 3\kappa^3 \ln(e+\kappa) \ln(e+\hG^2) }_{:=C_d}\cdot \ln(e+T),\nn
\eeq
\noi where step \step{1} uses the definition of $\gamma_t$ that $\gamma_t\leq \alpha_t^2$; step \step{2} uses Inequality (\ref{eq:alpha2:v2}); step \step{3} uses $\ln(1+ab)\leq \ln(e+a)\ln(e+b)$ for all $a,b\geq 0$.

\end{proof}

\subsection{Proof of Lemma \ref{lemma:U:square:MuonA}} \label{app:lemma:U:square:MuonA}

\begin{proof}
Define $s_t:=\|\S_t\|_{\fro}$, $w_t:=\bar\gamma_t s_t^2$, $\UUU_t:=\sum_{i=1}^t w_i$, $\UUU_0:=0$.

\noi Let $D_t:=2\alpha_t^2\|\epsilons_t\|_{\fro}^2 + 4 \varpi^5 |\tfrac{\alpha_{t-1}}{\alpha_t}-1|+ 3 \delta {\gamma_{t-1}^2m_{t-1}^2}/{\alpha_{t-1}}$.

 \noi Let $K_t:=2\alpha_t(1-\alpha_t)(1+\alpha_t) \langle \S_{t-1},\epsilons_t\rangle$, and $B_t= D_t + K_t$.

\paragraph{Bounding the Term $\EEE[\sum_{t=1}^T\UUU_{t-1}2\alpha_t^2\|\epsilons_t\|_{\fro}^2]$.} Note that $\UUU_{t-1}$ is $\FF_{t-1}$-measurable and $\UUU_{t-1}\le \UUU_T$ almost surely. Applying Lemma \ref{lemma:weighted:self:normalized} with $W_{t-1}=\UUU_{t-1}$ gives the following estimate:
\beq
&& \ts \EEE[\sum_{t=1}^T \UUU_{t-1}\,2\alpha_t^2\|\epsilons_t\|_{\fro}^2 ] ~~\xxxx{1}{\le}~~ \frac{9}{2}(1+\hG^2) \EEE[\sum_{t=1}^T \UUU_{t-1} \frac{\|\epsilons_t\|_{\fro}^2}{1+\sum_{i=1}^t g_i^2}] \nn \\
&\xxxx{2}{\le}& 36(1+\hG^2)^2 \EEE[\ln(1+\GGG_T) \UUU_T]~~\xxxx{3}{\le}~~ 36(1+\hG^2)^2 \ln(e+\hG^2) \cdot C_c \ln^2(e+T),
\eeq
\noi where step \step{1} uses $\frac{\alpha_t}{\alpha_{t+1}}\leq \tfrac{3}{2}$; step \step{2} uses Lemma \ref{lemma:weighted:self:normalized} with the predictable weight $W_{t-1}=\UUU_{t-1}$; step \step{3} uses $\GGG_T\leq T\hG^2$, and Lemma \ref{lemma:bcd:MuonA}(b).

\paragraph{Bounding the Term $\EEE[\sum_{t=1}^T\UUU_{t-1} 2\alpha_{t-1}^2v_{t-1}^2]$.} We derive:
\beq
\ts \EEE[\sum_{t=1}^T\UUU_{t-1}\,2\alpha_{t-1}^2v_{t-1}^2] &\xxxx{1}{\leq}& \EEE[ 18 (1+\hG^2)\ln(e+\GGG_T) \UUU_T]\nn\\
&\xxxx{2}{\leq}& 18(1+\hG^2) \ln(e+\hG^2) C_c \ln^2(e+T),
\eeq
\noi where step \step{1} uses $\UUU_{t-1}\le\UUU_T$ and the same pathwise estimate used in Lemma \ref{lemma:bcd:MuonA} to control
$\EEE[\sum_{t=1}^T\alpha_{t-1}^2v_{t-1}^2]$; step \step{2} uses Lemma \ref{lemma:bcd:MuonA}(b).

\paragraph{Bounding the Term $\EEE[\sum_{t=1}^T\UUU_{t-1} 3\delta \frac{\gamma_{t-1}^2m_{t-1}^2}{\alpha_{t-1}}]$.} We obtain
\beq
& &\ts \EEE [\sum_{t=1}^T\UUU_{t-1} 3\delta \frac{\gamma_{t-1}^2m_{t-1}^2}{\alpha_{t-1}} ]~~\xxxx{1}{\le}~~ 6 \delta \ln(e+\hG^2) \ln(e + \varpi \kappa) C_c \ln^2(e+T),
\eeq
\noi where step \step{1} uses the pathwise estimate used in Inequality (\ref{eq:gamma:m:alpha}) that $\EEE[\sum_{t=1}^T\frac{\gamma_{t-1}^2m_{t-1}^2}{\alpha_{t-1}}]\le \EEE[2 \ln(e + \varpi \kappa)\ln(e+\GGG_T)]$.

\paragraph{Bounding the Term $\EEE[\sum_{t=1}^T \UUU_{t-1}D_t]$.} We derive:
\beq \label{eq:CwCw}
&& \ts \EEE[\sum_{t=1}^T \UUU_{t-1}D_t] \nn\\
&= & \ts \sum_{t=1}^T \EEE[ \UUU_{t-1} \cdot 2\alpha_t^2\|\epsilons_t\|_{\fro}^2 + \UUU_{t-1} \cdot 4 \varpi^5 | \tfrac{\alpha_{t-1}}{\alpha_t} -1| + \UUU_{t-1} \cdot 3\delta {\gamma_{t-1}^2m_{t-1}^2}/{\alpha_{t-1}} ] \nn\\
&\leq &  \big\{ 36 (1+\hG^2)^2 \cdot C_c + 16 \varpi^5  + 6\delta C_c \big\} \ln(e + \varpi \kappa) \cdot \ln(e+\hG^2) \ln^2 (e+T)\nn\\
&\leq &  \underbrace{\ts \big( 88 \varpi^5 + 6 \delta \big) \cdot \ln(e+\hG^2) }_{:=C_e} \cdot C_c \ln^2 (e+T),
\eeq

\paragraph{Bounding the Term $\EEE\left[\sum_{t=1}^T\UUU_{t-1}w_t\right]$.} Using the fact that $\alpha_t s_t^2 = s_t^2-(1-\alpha_t)s_t^2$, we have from the recursion:
\beq
 \alpha_t s_t^2 \le (1-\alpha_t)(s_{t-1}^2-s_t^2)+B_t ~~\xxxx{1}{\Rightarrow}~~ w_t \le (1-\alpha_t)(s_{t-1}^2-s_t^2)+B_t. \nn
\eeq
\noi where step \step{1} uses $w_t=\bar\gamma_t s_t^2\le \alpha_t s_t^2$, which can be implied by $\bar\gamma_t\le\alpha_t$. Therefore,
\beq
&& \ts \sum_{t=1}^T\UUU_{t-1}w_t \nn\\
&\le& \ts\sum_{t=1}^T\UUU_{t-1}B_t + \sum_{t=1}^T \UUU_{t-1}(1-\alpha_t)(s_{t-1}^2-s_t^2)   \nn\\
& = & \ts \sum_{t=1}^T\UUU_{t-1}B_t + \UUU_0 (1 - \alpha_1) s_0^2 - \UUU_{T-1} (1 - \alpha_T) s_T^2 + \sum_{t=1}^{T-1} [\UUU_{t} (1-\alpha_{t+1}) -\UUU_{t-1} (1-\alpha_{t}) ] s_{t}^2  \nn\\
& \xxxx{1}{\leq} & \ts \sum_{t=1}^T\UUU_{t-1}B_t + \sum_{t=1}^{T-1} [\UUU_{t} (1-\alpha_{t+1}) -\UUU_{t-1} (1-\alpha_{t}) ] s_{t}^2  \nn\\
& \xxxx{ }{=} & \ts \sum_{t=1}^T\UUU_{t-1}B_t + \sum_{t=1}^{T-1} [(\UUU_t - \UUU_{t-1}) (1-\alpha_{t+1}) + \UUU_{t-1} (\alpha_t - \alpha_{t+1}) ] s_{t}^2  \nn\\
& \xxxx{ }{\leq } & \ts \sum_{t=1}^T\UUU_{t-1}B_t + \sum_{t=1}^{T-1} [(\UUU_t - \UUU_{t-1}) + \UUU_{t-1} \max(0,\alpha_t - \alpha_{t+1}) ] s_{t}^2  \nn\\
& \xxxx{ }{\leq } & \ts \sum_{t=1}^T\UUU_{t-1}B_t + (\max_{0\leq t\leq T} s_t^2) \UUU_T + (\max_{0\leq t\leq T} s_t^2) \UUU_T  \big( \sum_{t=1}^{T-1} \max(0,\alpha_t - \alpha_{t+1}) \big)  \nn\\
&\xxxx{2}{\leq} & \ts \sum_{t=1}^T\UUU_{t-1}(D_t + K_t) + \UUU_T \cdot 4\varpi^4 \cdot 4 \ln (e+\hG^2), \nn
\eeq
\noi where step \step{1} uses $\alpha_1=1$, and $\UUU_{T-1}\geq0$; step \step{2} uses $s_t\leq 2\varpi^2$.

Note that $\EEE[\UUU_{t-1}K_t] =\EEE[\UUU_{t-1}\EEE_t[K_t]]=0$, which holds since $\UUU_{t-1}$ is $\FF_{t-1}$-measurable and $\EEE_t[\epsilons_t]=\zero$. Taking expectations, we get
\beq \label{eq:weighted:cross:MuonA:new}
\ts \EEE\left[\sum_{t=1}^T\UUU_{t-1}w_t\right] &\le& \ts 16 \varpi^4 \ln(e+\hG^2) \EEE[\UUU_T] + \EEE[\sum_{t=1}^T\UUU_{t-1}D_t ] \nn \\
&\xxxx{1}{\leq}& 16\varpi^4 \ln(e+\hG^2) C_c \ln(e+T) + C_e C_c \ln^2(e+T) \nn \\
&\le& \big( 16 \varpi^4 \ln(e+\hG^2) + C_e\big)\cdot C_c \ln^2(e+T),
\eeq
\noi step \step{1} uses Inequality (\ref{eq:CwCw}).

\paragraph{Bounding \(\EEE[\UUU_T^2]\).}
Using $\UUU_T^2=\left(\sum_{t=1}^T w_t\right)^2=\sum_{t=1}^T w_t^2+2\sum_{t=1}^T\UUU_{t-1}w_t$, and $w_t^2=\bar\gamma_t^2s_t^4\le 4\varpi^4\bar\gamma_t s_t^2=4\varpi^4 w_t$, we obtain
\beq
\ts \EEE[\UUU_T^2] &\le& \ts 4\varpi^4\EEE[\UUU_T] + 2\EEE[\sum_{t=1}^T\UUU_{t-1}w_t] \nn\\
&\le& \ts 4\varpi^4C_c \ln(e+T) + 2( 16 \varpi^4 \ln(e+\hG^2) + C_e) C_c \ln^2(e+T) \nn \\
&\le& {\ts \big(4\varpi^4+32\varpi^4\ln(e+\hG^2)+2C_e\big)C_c} \ln^2(e+T)\nn \\
&\le& \underbrace{\ts \big(36\varpi^4\ln(e+\hG^2)+2C_e\big)C_c}_{:=C_u}\ln^2(e+T).\nn
\eeq

\end{proof}

\subsection{Proof of Lemma \ref{R_R}}
\label{app:R_R}

\begin{proof} Let $F_t:=f(\X_{t}) - f_*$, and $\bar\gamma_t=\gamma_t/\alpha_t$. Let $\bar\gamma_t = \min\{ \alpha_t,\,
\big(1+\sum_{j=1}^t\frac{\|\V_j\|_{\fro}^2}{\alpha_j}\big)^{-1/2}\}$.

\noi Define $\UUU_T:=\sum_{t=1}^T \bar\gamma_t\|\S_t\|_{\fro}^2$, $\WWW_T:=\sum_{t=1}^T\bar\gamma_t\|\V_t\|_{\fro}^2$, and $\VVV_T:=\sum_{t=1}^T\|\V_t\|_{\fro}^2$.

\paragraph{Bounding the term $\EEE[\WWW_T]$.} We have from Lemma \ref{lemma:suff:descent:A} that:
\beq
\bar\gamma_t\|\V_t\|_{\fro}^2
\le \tfrac{1}{c_0} \left( f(\X_{t})  - f(\X_{t+1}) + c_1 \bar\gamma_t\|\S_t\|_{\fro}^2 + c_2 \bar\gamma_t^2\|\V_t\|_{\fro}^2\right).\nn
\eeq
\noi Summing this inequality from $t=1$ to $T$:
\beq \label{eq:pathwise:ineq}
\ts \sum_{t=1}^T\bar\gamma_t\|\V_t\|_{\fro}^2 &\leq& \ts \tfrac{1}{c_0}\left( F_1 +  c_1 \sum_{t=1}^T\bar\gamma_t\|\S_t\|_{\fro}^2 + c_2 \sum_{t=1}^T\bar\gamma_t^2\|\V_t\|_{\fro}^2\right).
\eeq
\noi Using the definition of $\WWW_T:=\sum_{t=1}^T\bar\gamma_t\|\V_t\|_{\fro}^2$, and taking the expectation yields
\beq \label{eq:bound:WT1}
\EEE[\WWW_T] &\le & \ts \tfrac{1}{c_0}\left( F_1 + c_1\EEE\left[\sum_{t=1}^T\bar\gamma_t\|\S_t\|_{\fro}^2\right] + c_2\EEE\left[\sum_{t=1}^T\bar\gamma_t^2\|\V_t\|_{\fro}^2\right]\right) \nn\\
&\xxxx{1}{\leq} & \underbrace{\ts \tfrac{1}{c_0} (F_1 + c_1 C_c + c_2 C_d)}_{:=C_w} \cdot \ln (e+T),
\eeq
\noi where step \step{1} uses $\EEE [\sum_{t=1}^T \bar\gamma_t\|\S_t\|_{\fro}^2]\le C_c \ln(e+T)$, and $\ts \sum_{t=1}^T\bar{\gamma}_t^2\|\M_t\|_{\fro}^2\le C_d \ln(e+T)$ a.s..

\paragraph{Bounding the term $\EEE[\WWW_T^2]$.} Squaring the pathwise inequality in (\ref{eq:pathwise:ineq}) yields:
\beq
\WWW_T^2 &\leq& \ts \tfrac{3}{c_0^2} \left( F_1^2 + c_1^2 (\sum_{t=1}^T\bar\gamma_t\|\S_t\|_{\fro}^2)^2 + c_2^2 (\sum_{t=1}^T\bar\gamma_t^2\|\V_t\|_{\fro}^2)^2 \right) \nn\\
& \xxxx{1}{\leq} &  \ts \tfrac{3}{c_0^2} \left( F_1^2 + c_1^2  (\sum_{t=1}^T\bar\gamma_t\|\S_t\|_{\fro}^2)^2 + c_2^2 C_d^2 \ln^2(e+T) \right), \nn
\eeq
\noi where step \step{1} uses Lemma \ref{lemma:bcd:MuonA} that $\ts \sum_{t=1}^T \bar{\gamma}_t^2\|\V_t\|_{\fro}^2\le C_d \ln(e+T)$ holds almost surely. Taking expectations on both sides and using Lemma \ref{lemma:U:square:MuonA} that $\EEE[\UUU_T^2] \leq C_u \ln^2(e+T)$ with $\UUU_T:=\sum_{t=1}^T \bar\gamma_t\|\S_t\|_{\fro}^2$, we have:
\beq \label{eq:bound:WT2}
\EEE[\WWW_T^2] &\leq&  \ts \tfrac{3}{c_0^2} \left( F_1^2 + c_1^2 C_u \ln^2(e+T) + c_2^2 C_d^2 \ln^2(e+T) \right) \nn\\
&= &  \ts \underbrace{\ts \tfrac{3}{c_0^2} \left( F_1^2 + c_1^2 C_u + c_2^2 C_d^2 \right) }_{C_x} \cdot \ln^2(e+T).
\eeq

\paragraph{Bounding the term $\VVV_T$.} We derive:
\beq
\VVV_T & \xxxx{1}{\leq} & \ts \varpi \big( \sum_{t=1}^T \bar{\gamma}_t \|\V_t\|_{\fro}^2\big) \cdot \tfrac{1}{\bar\gamma_T} ~~ \xxxx{2}{=} ~~ \varpi \WWW_T \cdot \tfrac{1}{\bar\gamma_T}\nn\\
&\xxxx{3}{=} & \ts \varpi \WWW_T \cdot  \max\left(\frac1{\alpha_T}, \left(1+\sum_{j=1}^T\frac{\|\V_j\|_{\fro}^2}{\alpha_j}\right)^{1/2}  \right)  \nn\\
&\leq& \ts \varpi \WWW_T \cdot \left\{ \frac1{\alpha_T} + \left(1+\sum_{j=1}^T\frac{\|\V_j\|_{\fro}^2}{\alpha_j}\right)^{1/2} \right\} \nn\\
&\xxxx{4}{\leq}& \ts \varpi^2 \WWW_T \cdot \left\{ \frac1{\alpha_T} +  \left(1+ \tfrac{1}{\alpha_T} \VVV_T \right)^{1/2} \right\},  \label{eq:VVVT}
\eeq
\noi where step \step{1} uses $\bar\gamma_T\leq \varpi \bar \gamma_t$ for all $t\leq T$; step \step{2} uses the definition of $\WWW_T$; step \step{3} uses the definition of $\bar\gamma_t = \min \big\{\alpha_t,\big(1+\sum_{j=1}^t\frac{\|\V_j\|_{\fro}^2}{\alpha_j}\big)^{-1/2}\big\}$; step \step{4} uses $\alpha_T\leq\alpha_t \varpi$ for all $t\leq T$.

\paragraph{Bounding the term $\EEE[\alpha_T \VVV_T]$.} Let $Y_T:=\alpha_T \VVV_T$. Multiplying both sides of Inequality (\ref{eq:VVVT}) by $\alpha_T$ yields:
\beq
Y_T &\le& \ts \varpi^{2} \WWW_T \left( 1 + \sqrt{ \alpha_T^2 + Y_T}\right) ~~\xxxx{1}{\leq} ~~ 2\varpi^{2} \WWW_T \max\left( 2, \sqrt{ Y_T} \right) \nn\\
&\leq& \ts \max(4\varpi^2 \WWW_T, 4\varpi^4 \WWW_T^2 ) ~~  \leq ~~  4\varpi^4 (\WWW_T + \WWW_T^2),\nn
\eeq
\noi where step \step{1} uses $\sqrt{a+b}\leq \sqrt{a}+\sqrt{b}$ and $a+b\leq 2 \max(a,b)$ for all $a,b\geq 0$.

Taking expectations gives
\beq
\EEE[\alpha_T \VVV_T] &\le& 4\varpi^4 \EEE[ \WWW_T+\WWW_T^2] \nn\\
&\xxxx{1}{\leq}& 4\varpi^4 C_w \ln (e+T) + 4\varpi^4 C_x \ln^2(e+T) \nn\\
&\leq & 4\varpi^4 ( C_w + C_x) \ln^2(e+T), \nn
\eeq
\noi where step \step{1} uses Inequalities (\ref{eq:bound:WT1}) and (\ref{eq:bound:WT2}).

\end{proof}

\subsection{Proof of Theorem \ref{the:it:MuonA}}
\label{app:the:it:MuonA}

\begin{proof}
Define $\VVV_T:=\sum_{t=1}^T\|\V_t\|_{\fro}^2$, $\SSS_T:=\sum_{t=1}^T\|\S_t\|_{\fro}^2$, and $\GGG_T:=\sum_{t=1}^T\|\G_t\|_{\fro}^2$.

\noi Let $\epsilons_t = \G_t-\nabla f(\X_t)$, $\S_t=\V_t-\nabla f(\X_t)$, and $\ZZ_T:=\EEE\!\left[\sqrt{1+\GGG_T}\right]$.

\paragraph{Bounding the term $\EEE [\frac1{\alpha_T}]$.} We derive:
\beq
\ts \frac1{\alpha_T} ~~=~~ \left(\frac{1+\sum_{i=1}^{T-1} g_i^2}{1+\max_{1\le i\le {T-1}}g_i^2}
\right)^{1/2} ~~\le~~ \sqrt{1+\sum_{i=1}^T g_i^2} ~~=~~\sqrt{1+\GGG_T}.
\eeq
\noi This further leads to $\EEE [\frac1{\alpha_T}]\le \ZZ_T$.

\paragraph{Bounding the Scaled Momentum Sum.} We derive:
\beq \label{eq:bound:sqrt:V}
\ts \EEE[\sqrt{\VVV_T}] ~~\xxxx{1}{\leq}~~ \sqrt{\EEE[\alpha_T\VVV_T]}\,
\sqrt{\EEE\!\left[\frac1{\alpha_T}\right]} ~~\xxxx{2}{\leq}~~ \sqrt{ C_v \ln^2(e+T) \ZZ_T},
\eeq
\noi where step \step{1} uses $\EEE[\sqrt{XY}]\leq \sqrt{\EEE[X]\EEE[Y]}$; step \step{2} uses $\EEE [\frac1{\alpha_T}]\le \ZZ_T$, and Lemma \ref{R_R} that $\EEE[\alpha_T \VVV_T]\le C_v \ln^2(e+T)$.

\paragraph{Bounding the Tracking-Error Sum.} Since $\alpha_T\leq \alpha_t \varpi$, leading to $\SSS_T =\sum_{t=1}^T\|\S_t\|_{\fro}^2 \le \frac{\varpi}{\alpha_T}
\sum_{t=1}^T \alpha_t\|\S_t\|_{\fro}^2$. Taking square roots and using Cauchy--Schwarz gives
\beq \label{eq:bound:sqrt:S}
\ts \EEE[\sqrt{\SSS_T}] ~~\le ~~ \ts \sqrt{\varpi} \sqrt{\EEE [\frac1{\alpha_T}]\cdot \EEE[
\sum_{t=1}^T \alpha_t\|\S_t\|_{\fro}^2]} ~~\le~~ \ts \sqrt{\varpi} \sqrt{C_c \ln(e+T)\ZZ_T},
\eeq
\noi where step \step{1} uses $\EEE[\sqrt{XY}]\leq \sqrt{\EEE[X]\EEE[Y]}$; step \step{2} uses $\EEE [\frac1{\alpha_T}]\le \ZZ_T$, and Lemma \ref{lemma:bcd:MuonA} that $\EEE[ \sum_{t=1}^T \alpha_t \|\S_t\|_{\fro}^2]\le C_c \ln(e+T)$.

\paragraph{Bounding the terms $\EEE[\sqrt{ \sum_{t=1}^T\|\nabla f(\X_t)\|_{\fro}^2}]$ and $\ZZ_T$.} We derive:
\beq
\ts \EEE[\sqrt{ \sum_{t=1}^T\|\nabla f(\X_t)\|_{\fro}^2}] &\xxxx{1}{\leq}& \ts \EEE[\sqrt{2}\sqrt{\VVV_T} + \sqrt{2}\sqrt{\SSS_T}] \nn\\
&\xxxx{2}{\leq}&  \ts \sqrt{ 2 C_v \ln^2(e+T) \ZZ_T} + \sqrt{2 \varpi C_c \ln(e+T)\ZZ_T} \nn\\
&\xxxx{}{\leq}&  \ts \underbrace{ \ts \sqrt{ 2 C_v} + \sqrt{2 \varpi C_c}  }_{ \ts :=C_z} \sqrt{\ZZ_T} \ln (e+T), \label{eq:ggg}
\eeq
\noi where step \step{1} uses $\|\a+\b\|^2\leq 2\|\a\|^2+2\|\b\|^2$; step \step{2} uses Inequalities (\ref{eq:bound:sqrt:V}) and (\ref{eq:bound:sqrt:S}).

Additionally, we have
\beq \label{eq:Z:split}
\ZZ_T &= & \ts \EEE [\sqrt{1+\sum_{t=1}^T\|\G_t\|_{\fro}^2} ] \notag\\
&\xxxx{1}{\leq}& \ts 1 + \sqrt{2} \EEE [\sqrt{\sum_{t=1}^T\|\nabla f(\X_t)\|_{\fro}^2}] + \sqrt{2} \EEE [
\sqrt{\sum_{t=1}^T\|\epsilons_t\|_{\fro}^2}] \nn\\
&\xxxx{2}{\leq}& \ts 1 + \sqrt{2} C_z \sqrt{\ZZ_T} \ln (e+T)  + \sqrt{2}\sigma \sqrt{T} \nn\\
&\xxxx{3}{\leq}& \ts 2\max(\sqrt{2} C_z \sqrt{\ZZ_T} \ln (e+T) ,  1 + \sqrt{2}\sigma \sqrt{T} ) \nn\\
&\le& \ts \max\big( 4(\sqrt{2} C_z  \ln (e+T) )^2,  2 + 2\sqrt{2}\sigma \sqrt{T}\big) \nn\\
&\le& \ts (2 + 8 C_z^2) \ln^2(e+T)  + 2\sqrt{2}\sigma \sqrt{T}, \label{eq:MuonA:zzz}
\eeq
\noi where step \step{1} uses $\|\a+\b\|^2\leq 2\|\a\|^2+2\|\b\|^2$; step \step{2} uses Inequality (\ref{eq:ggg}); step \step{3} uses $a+b\leq2\max(a,b)$, and the corrected self-consistency step uses $Z\le2\max(A\sqrt Z,B)\Rightarrow Z\le\max(4A^2,2B)$.

\paragraph{Bounding the term $\EEE [\frac{1}{T}\sum_{t=1}^T \|\nabla f(\X_t)\|_{\fro} ]$.} By Cauchy--Schwarz,
\beq
\ts \EEE \left[\frac1T\sum_{t=1}^T\|\nabla f(\X_t)\|_{\fro}\right] &\le& \ts \frac1{\sqrt T} \EEE\left[\sqrt{\sum_{t=1}^T\|\nabla f(\X_t)\|_{\fro}^2}
\right] \nn\\
&\xxxx{1}{\leq}& \ts \frac1{\sqrt T} C_z \sqrt{\ZZ_T} \cdot \ln(e+T) \nn\\
&\xxxx{2}{\leq}& \ts \frac1{\sqrt T} C_z \sqrt{(2 + 8 C_z^2) \ln^2(e+T) + 2\sqrt{2}\sigma \sqrt{T}} \cdot \ln(e+T) \nn\\
& \leq & \ts C_z\sqrt{2+8C_z^2}\frac{\ln^2(e+T)}{T^{1/2}}
+2^{3/4}C_z\frac{\sigma^{1/2}\ln(e+T)}{T^{1/4}}. \nn
\eeq
\noi where step \step{1} uses Inequality (\ref{eq:ggg}); and step \step{2} uses Inequality (\ref{eq:MuonA:zzz}).

\noi If \(R\sim {\rm Unif}\{1,\ldots,T\}\) is independent of the algorithmic randomness, then
\[
\EEE\|\nabla f(\X_R)\|_{\fro}
=
\EEE\left[\frac1T\sum_{t=1}^T\|\nabla f(\X_t)\|_{\fro}\right],
\]
which proves the output guarantee.

\end{proof}

\section{Proofs for Section \ref{sect:it:LL}}
\label{app:sect:it:LL}

\subsection{Proof of Lemma \ref{lemma:prop:s:2}}
\label{app:lemma:prop:s:2}

\begin{proof}

By Assumption \ref{ass:BG},
\beq
\ts \|\epsilons_t\|_{\fro}\le 2\hG,\qquad
    \|\epsilons'_t\|_{\fro}\le 2\hG
    \quad\text{a.s.}.
\eeq

Recall that \(\alpha_t=\rho_{t-1}\), where
\[
\rho_t=\left(\frac{1+\NNN_t}{1+\GGG_t}\right)^{2/3},
\qquad
\NNN_t:=\max_{1\le i\le t}g_i^2,\quad
\GGG_t:=\sum_{i=1}^t g_i^2 .
\]
For \(1\le k\le t\le T\), Lemma \ref{lemma:prop:para} gives
\[
    \frac{\alpha_t}{\alpha_k}
    =
    \frac{\rho_{t-1}}{\rho_{k-1}}
    \le
    (1+\NNN_{t-1})^{2/3}
    \le (1+\hG^2)^{2/3}
    =\varpi.
\]
Let
\[
    \beta_t:=\left(1+\sum_{j=1}^t\alpha_j^{-1/2}\|\M_j\|_{\fro}^2\right)^{-1/2}.
\]
Then \(\beta_t\) is nonincreasing and
\(\gamma_t=\min\{\sqrt{\alpha_t},\beta_t\}\). Hence
\[
\frac{\gamma_t}{\gamma_k}\le \max\left\{\sqrt{\frac{\alpha_t}{\alpha_k}},\frac{\beta_t}{\beta_k}\right\} \le \sqrt{\varpi}.
\]
The consecutive ratio follows from Lemma \ref{lemma:prop:para}:
\[
    \frac{\alpha_{t-1}}{\alpha_t}
    =
    \frac{\rho_{t-2}}{\rho_{t-1}}
    \le 2^{2/3}\le 2 .
\]
Finally, applying Lemma \ref{lemma:prop:para} with \(q=2/3\) gives
\[
    (1-\alpha_t)\sqrt{\frac{\alpha_{t-1}}{\alpha_t}}
    \le
    1-(1-q)\alpha_t
    =
    1-\frac{\alpha_t}{3}.
\]
\end{proof}

\subsection{Proof of Lemma \ref{lemma:OptMuonI:sumV:byG}}
\label{app:lemma:OptMuonI:sumV:byG}

\begin{proof} Define $\MMM := \sum_{i=1}^T m_i^2$, and $\GGG^+ := 1 + \sum_{i=1}^T g_i^2$.

\noi Define $\Z_t:= \tfrac{1-\alpha_t}{\alpha_t}\left(\nabla f(\X_t;\xi_t)  - \nabla f(\X_{t-1};\xi_t)\right)  +  \nabla f(\X_t;\xi_t)$, $z_t:= \|\Z_t\|_{\fro}$, and $\delta:=nL^2\theta^2$.

\paragraph{Bounding $\sum_{t=1}^T m_t^2$ using $\sum_{t=1}^T z_t^2$.} The update rule for $\M_t$ can be rewritten as $\M_t = (1-\alpha_t) \M_{t-1} + \alpha_t \Z_t$, and applying the convexity of $\|\cdot \|$ yields:
\beq
\ts m_t^2 \leq (1-\alpha_t) m_{t-1}^2 + \alpha_t z_t^2 &\overset{}{\Rightarrow}& \ts \frac{m_t^2}{\alpha_t} \leq \frac{m_{t-1}^2}{\alpha_t} - m_{t-1}^2 + z_t^2\nn\\
&\overset{}{\Rightarrow}& \ts m_{t}^2 \leq (\frac{1}{\alpha_t}-1) (m_{t-1}^2 - m_{t}^2) + z_t^2. \label{eq:bound:m:1}
\eeq
\noi Summing Inequality (\ref{eq:bound:m:1}) over $t = 1,2,\ldots,T$ yields:
\beq \label{eq:sum:m}
\ts \sum_{t=1}^T m_t^2 &\overset{}{\leq}& \ts \sum_{t=1}^T z_t^2 + \sum_{t=1}^T (\frac{1}{\alpha_t}-1) (m_{t-1}^2 - m_{t}^2) \nn\\
&\overset{\step{1}}{=}& \ts \sum_{t=1}^T z_t^2 + \left(\tfrac{1}{\alpha_1} - 1\right) m_0^2 - \left(\tfrac{1}{\alpha_{T+1}} - 1 \right) m_T^2 + \sum_{t=1}^T (\tfrac{1}{\alpha_{t+1}} - \tfrac{1}{\alpha_t}  ) m_{t}^2  \nn\\
&\overset{\step{2}}{\leq}& \ts \sum_{t=1}^T z_t^2 + q \sum_{t=1}^T m_{t}^2~~\overset{}{\leq}~~ \ts \frac{1}{1-q}\sum_{t=1}^T z_t^2 ~~=~~ 3 \sum_{t=1}^T z_t^2, \nn
\eeq
\noi where step \step{1} uses Lemma \ref{lemma:diff}; step \step{2} uses $\left(\tfrac{1}{\alpha_1} - 1\right) m_0^2=0$, $-\left(\tfrac{1}{\alpha_{T+1}} - 1 \right) m_T^2\leq 0$, and the fact that $\tfrac{1}{\alpha_{t+1}}-\tfrac{1}{\alpha_{t}}\leq q$.

\paragraph{Bounding the term $\sum_{t=1}^T z_t^2$.} We derive the following results:
\beq
\ts \sum_{t=1}^T z_t^2 &\overset{}{=}& \ts \sum_{t=1}^T \| \tfrac{1-\alpha_t}{\alpha_t}\left(\nabla f(\X_t;\xi_t)  - \nabla f(\X_{t-1};\xi_t)\right)  +  \nabla f(\X_t;\xi_t)\|_{\fro}^2  \nn\\
&\overset{\step{1}}{\leq}& \ts \bigl(  2 \sum_{t=1}^T g_t^2\bigr) +  \bigl( 2L^2 \sum_{t=1}^T  \frac{1}{\alpha_t^2} \| \X_t -\X_{t-1} \|_{\fro}^2  \bigr)  \nn\\
&\overset{\step{2}}{=}& \ts \bigl(  2\sum_{t=1}^T g_t^2 \bigr) + \bigl( 2L^2 \sum_{t=1}^{T-1}  \frac{1}{\alpha_{t+1}^2} \| \X_{t+1} -\X_{t} \|_{\fro}^2  \bigr) \nn\\
&\overset{\step{3}}{\leq}& \ts \bigl(  2\sum_{t=1}^T g_t^2 \bigr) + 6  n L^2\theta^2    \sum_{t=1}^T  \frac{\gamma_{t}^2 m_{t}^2}{\alpha_{t}^2} =  \bigl(  2\sum_{t=1}^T g_t^2 \bigr) + 6 \delta \sum_{t=1}^T  \frac{\gamma_{t}^2 m_{t}^2}{\alpha_{t}^2}   , \label{eq:bound:sum:z}
\eeq
\noi where step \step{1} uses $\|\nabla f(\X_t;\xi_t)  - \nabla f(\X_{t-1};\xi_t) \|_{\fro}\leq L\|\X_t-\X_{t-1}\|_{\fro}$; step \step{2} uses $\|\X_{1}-\X_{0}\|_{\fro}=0$; step \step{3} uses $\|\X_{t+1} - \X_{t}\|_{\fro}^2  \leq n \eta_{t}^2 = n \theta^2 \gamma_{t}^2 m_{t}^2$ and $\tfrac{\alpha_t^2}{\alpha_{t+1}^2} \leq (1+q)^2 = (1 + 2/3)^2<3$.

\paragraph{Bounding the term $\sum_{t=1}^T  \frac{\gamma_{t}^2 m_{t}^2}{\alpha_{t}^2}$.} We further derive:
\beq \label{eq:MGMG}
\ts \sum_{t=1}^T \frac{\gamma_{t}^2 m_{t}^2}{\alpha_{t}^2}     &\overset{\step{1}}{\leq}& \ts \sum_{t=1}^{T}   \frac{  m_t^2 / \alpha_{t}^2 }{  1 + \sum_{i=1}^t m_i^2 / \sqrt{\alpha_i} } ~~\overset{\step{2}}{\leq}~~ \ts  \frac{\varpi^{3/2}}{\alpha_{T}^{3/2}} \cdot   \ln\left(1 + \sum_{t=1}^T \frac{m_t^2}{\sqrt{\alpha_t}}  \right)   \nn\\
&\overset{\step{3}}{\leq}&  \ts \varpi^{3/2} \GGG^+ \cdot  \ln \left( 1 + \MMM \cdot(\GGG^+)^{1/3}   \right),
\eeq
\noi where step \step{1} uses $\gamma_t^2 \leq \tfrac{1}{1 + \sum_{i=1}^t m_i^2/\sqrt{\alpha_i}}$; step \step{2} uses Lemma \ref{lemma:adaptive:important}(a), and $\alpha_t^{-3/2} \leq\alpha_T^{-3/2} \varpi^{3/2}$ (which can be implied by $\tfrac{\alpha_T}{\alpha_t}\leq \varpi$); step \step{3} uses
\beq
\ts \frac{1}{\alpha_{T}^{3/2}} ~~=~~ \frac{1 + \sum_{i=1}^{T-1} g_i^2}{ 1 + \max_{i=1}^{T-1} g_i^2} ~~\leq ~~ 1 + \sum_{i=1}^T g_i^2 ~~=~~ \GGG_T^+.\nn
\eeq

\noi We have from Inequalities (\ref{eq:sum:m}), (\ref{eq:bound:sum:z}), and (\ref{eq:MGMG}):
\beq
\ts \MMM &:=& \ts \sum_{t=1}^T m_t^2 ~~\leq~~  6 \GGG + 18  \delta  \cdot \varpi^{3/2} \GGG^+ \cdot \ln(1 + \MMM (\GGG^+)^{1/3} )  \label{eq:hhh}\\
&\overset{\step{1}}{\leq}& \ts  \max(12 \GGG, 36   \delta \cdot \varpi^{3/2} \GGG^+ \cdot \ln(1 + \MMM (\GGG^+)^{1/3}) ) \nn\\
&\overset{\step{2}}{\leq}& \ts  \max\left(12 \GGG, 72   \delta \cdot \varpi^{3/2} \GGG^+ \cdot \ln(e +  36   \delta  \varpi^{3/2}\cdot (\GGG^+)^{4/3}) \right) \nn\\
&\overset{\step{3}}{\leq}& \ts  \max\left(12 \GGG, 72   \delta \cdot \varpi^{3/2} \GGG^+ \cdot \tfrac{4}{3}\ln(e +  36  \delta \varpi^{3/2} (1+T\hG^2) ) \right) \nn\\
&\overset{}{\leq}& \ts \underbrace{\ts  2\max\big(12 , 96 \delta \varpi^{3/2} \ln(e +  36  \delta \varpi^{3/2} (\hG^2+1) ) \big) }_{:=\kappa}\cdot \ln(e+T) \GGG^+ \nn
\eeq
\noi where step \step{1} uses $a+b\leq 2 \max(a,b)$ for all $a,b\geq 0$; step \step{2} uses Inequality (\ref{eq:hhh}) and Lemma \ref{lemma:yc}; step \step{3} uses $\ln(1+ab)\leq \ln(e+a)\ln(e+b)$ for all $a,b\geq 0$.






\end{proof}

\subsection{Proof of Lemma \ref{lemma:VR:bound:I}}
\label{app:lemma:VR:bound:I}

\begin{proof} Define $\mathbf S_t := \M_t - \nabla f(\mathbf X_t)$, and $\epsilons_t := \G_t - \nabla f(\mathbf X_t)$.

\noi Define $m_t:=\|\M_t\|_{\fro}$, and $g_t:=\|\G_t\|_{\fro}$.

\noi Define $\G_t:=\nabla f(\X_{t};\xi_t)$, and $\G'_t:=\nabla f(\X_{t-1};\xi_t)$.

\noi Define $\Delta_t := (\G_t-\G'_t)-(\nabla f (\X_t)- \nabla f (\X_{t-1}))$.

\noi Define $\epsilons_t:= \nabla f(\X_{t};\xi_t) - \nabla f(\X_t)$ and $\epsilons'_t:= \nabla f(\X_{t-1};\xi_t) - \nabla f(\X_{t-1})$.

\paragraph{Difference-based representation of $\S_t$.} We derive the following equalities:
\beq \label{eq:eez}
\S_t &:= & \M_t - \nabla f(\X_t) \nn\\
&\xxxx{1}{=}&  (1-\alpha_t) (\M_{t-1}  -  \nabla f(\X_{t-1};\xi_t) ) +  \G_t  - \nabla f(\X_t) \nn\\
&\xxxx{}{=}& (1-\alpha_t) (\M_{t-1} - \nabla f(\X_{t-1})) + (1-\alpha_t) ( \nabla f(\X_{t-1})   -   \nabla f(\X_{t-1};\xi_t) ) +  \epsilons_t   \nn\\
&\xxxx{2}{=}& \ts (1-\alpha_t)\S_{t-1}    + \ts (\G_t - \nabla f(\X_t))  - (1-\alpha_t) (\G'_t -  \nabla f(\X_{t-1}) ) \nn\\
&\xxxx{2}{=}&  \underbrace{\ts (1-\alpha_t)\S_{t-1}   }_{:=\A_t} + \underbrace{\ts \epsilons_t   - (1-\alpha_t) \epsilons'_t }_{:=\R_t},
\eeq
\noi where step \step{1} uses the update rule of OptMuon-I that $\ts\M_{t} = (1-\alpha_{t}) (\M_{t-1}  - \nabla f(\X_{t-1};\xi_t) )+\G_t$; step \step{2} uses $\S_t := \M_t - \nabla f(\X_t)$.

\paragraph{Bounding the term $\|\R_t\|_{\fro}^2$.} We have:
\beq \label{eq:z:exp}
\|\R_t\|_{\fro}^2 &\xxxx{1}{=}&  \| \epsilons_t - (1-\alpha_t) \left( \G'_t   - \nabla f(\X_{t-1})  \right)  \|_{\fro}^2 \nn\\
&\xxxx{}{=}& \| \alpha_t \epsilons_t + (1-\alpha_t) \left(  \G_t -  \nabla f(\X_{t}) +  \nabla f(\X_{t-1})   -   \nabla f(\X_{t-1};\xi_t)  \right) \|_{\fro}^2 \nn\\
&\xxxx{2}{\leq}& 2 \alpha_t^2 \|\epsilons_t\|_{\fro}^2 + 2 (1-\alpha_t)^2 \| \G_t -  \nabla f(\X_t) + \nabla f(\X_{t-1})   -  \nabla f(\X_{t-1};\xi_t) \|_{\fro}^2 \nn\\
&\xxxx{}{\leq}& 2 \alpha_t^2 \|\epsilons_t\|_{\fro}^2  + 4 \| \G_t -  \nabla f(\X_{t-1};\xi_t) \|_{\fro}^2  + 4 \|\nabla f(\X_{t}) - \nabla f(\X_{t-1})    \|_{\fro}^2 \nn\\
&\xxxx{3}{\leq}& 2 \alpha_t^2 \|\epsilons_t\|_{\fro}^2 + (4+4) L^2 \| \X_{t} -  \X_{t-1} \|_{\fro}^2 \nn\\
&\xxxx{4}{\leq}& 2 \alpha_t^2 \|\epsilons_t\|_{\fro}^2 + 8 n L^2\theta^2 \gamma_{t-1}^2m_{t-1}^2,
\eeq
\noi where step \step{1} uses the definition of $\R_t$; step \step{2} uses $\|\A+\B\|_{\fro}^2\leq 2\|\A\|_{\fro}^2+2\|\B\|_{\fro}^2$ for matrices $\A,\B$; step \step{3} uses $\|\nabla f(\X_{t-1};\xi_t) - \nabla f(\X_{t};\xi_t)\| \leq L\|\X_{t}-\X_{t-1}\|_{\fro}$, and $\|\nabla f(\X_{t-1}) - \nabla f(\X_{t})\|\leq L\|\X_{t}-\X_{t-1}\|_{\fro}$; step \step{4} uses $\X_{t+1}=\X_{t}-\theta\gamma_{t}m_{t}\Orth(\M_{t})$, and $\|\Orth(\M_{t-1})\|_{\fro}^2\leq n$.

\paragraph{Bounding the term $\|\S_t\|_{\fro}^2$.} Given $\S_t=\A_t+\R_t$, we derive:
\beq
\| \S_t\|_{\fro}^2 & = &  \ts \|\A_t\|_{\fro}^2 + \|\R_t\|_{\fro}^2 + 2 \la \A_t,\R_t \ra \nn\\
& \xxxx{1}{\leq} & \ts (1-\alpha_t) \|\S_{t-1}\|_{\fro}^2 + 2 \alpha_t^2 \|\epsilons_t\|_{\fro}^2 + 8 n L^2\theta^2 \gamma_{t-1}^2m_{t-1}^2 \nn\\
&& \ts + 2 (1-\alpha_t) \la \S_{t-1},\epsilons_t   - (1-\alpha_t) \epsilons'_t \ra, \nn
\eeq
\noi where step \step{1} uses $\alpha_t\in(0,1]$, leading to $(1-\alpha_t)^2\leq 1-\alpha_t$.
\end{proof}

 \subsection{Proof of Lemma \ref{lemma:suff:descent:I}}
\label{app:lemma:suff:descent:I}

\begin{proof} Let $\mathbf{X}_{t+1} = \mathbf{X}_t - \tilde{\gamma}_t \operatorname{Orth}(\mathbf{M}_t)$, where $\tilde{\gamma}_t:=\theta \gamma_t \|\M_t\|_{\fro}$.

For any $t\leq T$, we have the following inequalities:
\beq
&& \ts f(\X_{t+1}) - f(\X_{t}) \nn\\
&\overset{\step{1}}{\leq}& \ts \tfrac{L}{2}\|\X_{t+1}-\X_t\|_{\fro}^2 + \la \X_{t+1}-\X_t,\nabla f(\X_t) \ra  \nn\\
&\overset{\step{2}}{=}& \ts \tfrac{L}{2}\|-\tilde{\gamma}_t\Orth(\M_t)\|_{\fro}^2 - \la \tilde{\gamma}_t\Orth(\M_t),\M_t \ra + \la -\tilde{\gamma}_t\Orth(\M_t),\nabla f(\X_t) - \M_t \ra \nn\\
&\overset{\step{3}}{\leq}& \ts \tfrac{Ln}{2} \tilde{\gamma}_t^2  - \tilde{\gamma}_t \|\M_t\|_{\fro} + \tilde{\gamma}_t \|\nabla f(\X_t) - \M_t \|_*     \nn\\
&\overset{\step{4}}{\leq}& \ts \tfrac{L n}{2} \theta^2 \gamma_t^2 \|\M_t\|_{\fro}^2  + \theta \gamma_t \big( -  \|\M_t\|_{\fro}^2 +  \|\M_t\|_{\fro} \|\nabla f(\X_t) - \M_t \|_* \big) \nn\\
&\overset{\step{5}}{\leq}& \ts \tfrac{L n}{2} \theta^2 \gamma_t^2 \|\M_t\|_{\fro}^2  + \theta \gamma_t \big( -  \tfrac{1}{2}\|\M_t\|_{\fro}^2 +  \tfrac{1}{2}\|\nabla f(\X_t) - \M_t \|_*^2 \big) \nn\\
&\overset{\step{6}}{\leq}& \ts \tfrac{L n}{2} \theta^2 \gamma_t^2 \|\M_t\|_{\fro}^2  + \theta \gamma_t \big( -  \tfrac{1}{2}\|\M_t\|_{\fro}^2 +  \tfrac{n}{2}\|\nabla f(\X_t) - \M_t \|_{\fro}^2 \big), \nn
\eeq
\noi where step \step{1} uses the $L$-smoothness of $f(\cdot)$; step \step{2} uses $\X_{t+1}-\X_t=-\tilde{\gamma}_t\Orth(\M_t)$; step \step{3} uses $\la \X,\Y \ra\leq \|\X\| \|\Y\|_{*}$, and the fact that:
\beq
\la \Orth(\M_t),\M_t \ra = \|\M_t\|_* \geq \|\M_t\|_{\fro};\nn
\eeq
\noi step \step{4} uses the definition of $\tilde{\gamma}_t$; step \step{5} uses $-a^2+ab\le -\tfrac12a^2+\tfrac12b^2$ with $a=\|\M_t\|_{\fro}$ and $b=\|\nabla f(\X_t)-\M_t\|_*$, and step \step{6} uses $\|\nabla f(\X_t)-\M_t\|_*^2\le n\|\nabla f(\X_t)-\M_t\|_{\fro}^2$.

\end{proof}

\subsection{Proof of Lemma \ref{lemma:bcd:MuonI}}
\label{app:lemma:bcd:MuonI}

\begin{proof} Let $s_t:=\|\S_t\|_{\fro}$, $m_t:=\|\M_t\|_{\fro}$, $R_t:=\sum_{i=1}^t g_i^2$, $H_t:=1+\sum_{j=1}^t \alpha_j^{-1/2}m_j^2$.

\noi We set $\gamma_0=0$, $m_0=0$, and $\M_0=\zero$.

\noi Define $\S_t:=\M_t-\nabla f(\X_t)$, $\epsilons_t:= \nabla f(\X_{t};\xi_t) - \nabla f(\X_t)$, $\epsilons'_t:= \nabla f(\X_{t-1};\xi_t) - \nabla f(\X_{t-1})$.

\noi Recall that for OptMuon-I, $q=2/3$ and $\gamma_t=\min\{\sqrt{\alpha_t},H_t^{-1/2}\}$.

\noi Let $B_t:= 2 \alpha_t^2 \|\epsilons_t\|_{\fro}^2 + 8 \delta  \gamma_{t-1}^2m_{t-1}^2 + 2 (1-\alpha_t) \la \S_{t-1},\epsilons_t   - (1-\alpha_t) \epsilons'_t\ra$, where $B_t$ can be negative.

\noi Define $\gamma_t=\min\left\{\sqrt{\alpha_t},\left(1+\sum_{j=1}^t\alpha_j^{-1/2}\|\M_j\|_{\fro}^2\right)^{-1/2}\right\}$. Let $\GGG^+ := 1 + \sum_{i=1}^T g_i^2$.

\paragraph{Bounding the Term $\alpha_t^{3/2}$.} By the definition of \(\alpha_t\), for all \(t\ge2\),
\beq
\alpha_t^{3/2}\le \frac{1+\max_{1\le i\le t-1}g_i^2}{1+\sum_{i=1}^{t-1}g_i^2}\le\frac{1+\hG^2}{1+R_{t-1}}.
\eeq
\noi The inequality above also holds for the case $t=1$.

\paragraph{Bounding the Term $\EEE[\sum_{t=1}^T\alpha_{t}^{3/2}\|\epsilons_t\|_{\fro}^2]$.} We derive:
\beq \label{eq:alpha:32:epsilon}
&& \ts \EEE[\sum_{t=1}^T\alpha_{t}^{3/2}\|\epsilons_t\|_{\fro}^2] ~~\xxxx{1}{\leq}~~ 4\EEE[\sum_{t=1}^T\alpha_{t+1}^{3/2}\|\epsilons_t\|_{\fro}^2] ~~\xxxx{2}{\leq}~~ \ts  \ts  4(1+\hG^2) \EEE[\sum_{t=1}^T  \tfrac{\|\epsilons_t\|_{\fro}^2}{1+\GGG_t} ]  \nn\\
&\xxxx{3}{\leq}& \ts 32 (1+\hG^2)^2 \EEE[\sum_{t=1}^T  \tfrac{\|\G_t\|_{\fro}^2}{1+\GGG_t} ]  ~~\xxxx{4}{\leq}~~\ts 32 (1+\hG^2)^2 \EEE[\ln(1 + \GGG_T) ] ,
\eeq
\noi where step \step{1} uses $\tfrac{\alpha_t}{\alpha_{t+1}} \leq 2$; step \step{2} uses $\alpha_{t+1}^{3/2} \leq \tfrac{1+\hG^2}{1+\GGG_t}$; step \step{3} uses Lemma \ref{lemma:weighted:self:normalized} with $W_t=1$; step \step{4} uses Lemma \ref{lemma:adaptive:important}(a).

\paragraph{Bounding the Term $\sum_{t=1}^T\gamma_{t-1}^2m_{t-1}^2/\sqrt{\alpha_t}$.} We derive:
\beq \label{eq:gamma:2:m:2:sqrt:alpha}
&& \ts \sum_{t=1}^T\frac{\gamma_{t-1}^2m_{t-1}^2}{\sqrt{\alpha_t}}
~~\xxxx{1}{\leq}~~
2\sum_{t=1}^T\frac{\gamma_t^2m_t^2}{\sqrt{\alpha_t}}
~~\xxxx{2}{\leq}~~
2\sum_{t=1}^T \frac{m_t^2/\sqrt{\alpha_t}}{1+\sum_{i=1}^t m_i^2/\sqrt{\alpha_i}} \nn\\
&\xxxx{} {\leq}&  \ts 2\ln(1 + \MMM_T /\sqrt{\alpha_T})
~~\xxxx{3}{\leq} ~~2\ln(1 + \kappa \ln (e+T) \GGG^+ /\sqrt{\alpha_T})
~~\xxxx{4}{\leq}~~ 2\ln(1 + \kappa \ln (e+T) (\GGG^+)^{4/3})  \nn\\
&\xxxx{5}{\leq}& 4\ln(1 + \kappa \ln (e+T) \GGG^+)
~~\leq~~ 4\ln(1 + \kappa \ln (e+T) (1 + \hG^2T))  \nn\\
&\xxxx{6}{\leq}& 4\ln(e+\kappa (1+\hG^2))\ln(1+\ln(e+T)T)
~~\xxxx{7}{\leq}~~ 8\ln(e+\kappa (1+\hG^2))\ln(e+T).
\eeq
\noi where step \step{1} uses $m_0=0$ and Lemma~\ref{lemma:prop:s:2}, which gives $\alpha_{t-1}/\alpha_t\le2$; step \step{2} uses the definition of $\gamma_t$; step \step{3} uses Lemma~\ref{lemma:OptMuonI:sumV:byG}; step \step{4} uses $\tfrac{1}{\sqrt{\alpha_T}}\leq \big(1+\sum_{t=1}^T g_t^2\big)^{1/3}=(\GGG^+)^{1/3}$; step \step{5} uses $\ln(a+b^{4/3})\leq \tfrac{4}{3}\ln(a+b)$ for all $a\geq 1$ and $b\geq 0$; step \step{6} uses $\ln(1+ab)\leq \ln(e+a)\ln(e+b)$; step \step{7} uses $\ln(1+\ln(e+T)T)\leq2\ln(e+T)$.

\paragraph{Bounding the Term $\EEE[\sum_{t=1}^T \tfrac{B_t}{\sqrt{\alpha_t}} ]$.} We derive:
\beq
\ts \EEE[\sum_{t=1}^T \tfrac{B_t}{\sqrt{\alpha_t}} ] & = & \ts \EEE[ \sum_{t=1}^T  2 \alpha_t^{3/2} \|\epsilons_t\|_{\fro}^2 + 8 \delta \tfrac{1}{\sqrt{\alpha_t}}  \gamma_{t-1}^2m_{t-1}^2 + 2 \tfrac{1-\alpha_t}{\sqrt{\alpha_t}} \la \S_{t-1},\epsilons_t   - (1-\alpha_t) \epsilons'_t\ra] \nn\\
&\xxxx{1}{\leq}& \ts \EEE[ \sum_{t=1}^T  2 \alpha_t^{3/2} \|\epsilons_t\|_{\fro}^2 + 16 \delta \tfrac{1}{\sqrt{\alpha_{t-1}}}   \gamma_{t-1}^2m_{t-1}^2 ] \nn\\
&\xxxx{2}{\leq}& \ts 64 (1+\hG^2)^2 \EEE[\ln(1 + \GGG_T) ] +  64 \delta \ln(e+\kappa + \kappa\hG^2) \ln (e+T) \nn\\
&\xxxx{}{\leq}& \ts \underbrace{\ts \big( 64(1+\hG^2)^2 + 64 \delta \big) \cdot \ln(e+\kappa + \kappa\hG^2) }_{:=C_b}\cdot \ln(e+T),  \nn
\eeq
\noi where step \step{1} uses the conditional-zero identity $\EEE_t[\langle\S_{t-1},\epsilons_t-(1-\alpha_t)\epsilons'_t\rangle]=0$ and $\tfrac{1}{\sqrt{\alpha_t}}\leq 2\tfrac{1}{\sqrt{\alpha_{t-1}}}$; step \step{2} uses Inequalities (\ref{eq:alpha:32:epsilon}) and (\ref{eq:gamma:2:m:2:sqrt:alpha}).

\paragraph{Bounding the Term $\sum_{t=1}^T \sqrt{\alpha_t} s_t^2$.} Let $P_t:=\frac{s_t^2}{\sqrt{\alpha_t}}$. By Lemma \ref{lemma:VR:bound:I}, we have $\ts s_t^2\le (1-\alpha_t)s_{t-1}^2+B_t$. Dividing both sides by $\sqrt{\alpha_t}$ gives
\beq
\ts P_t ~~\le~~ \ts (1-\alpha_t)\sqrt{\frac{\alpha_{t-1}}{\alpha_t}}P_{t-1} + \frac{B_t}{\sqrt{\alpha_t}} ~~\xxxx{1}{\leq}~~ \ts (1- \frac{\alpha_t}{3}) P_{t-1} + \frac{B_t}{\sqrt{\alpha_t}},  \label{eq:to:be:cited}
\eeq
\noi where step \step{1} uses Lemma~\ref{lemma:prop:s:2}. Equivalently, $\ts \frac13\alpha_tP_{t-1}\le P_{t-1}-P_t+\frac{B_t}{\sqrt{\alpha_t}}$. Summing over $t=1,\ldots,T$ yields:
\beq \label{eq:bound:alpha:P:sum}
\ts \sum_{t=1}^T \alpha_tP_{t-1} &\le& \ts 3P_0 + 3 \sum_{t=1}^T\frac{B_t}{\sqrt{\alpha_t}}.
\eeq
This further leads to the following inequalities:
\beq \label{eq:pathwise:alpha:sqrt:st2}
\ts \sum_{t=1}^T \sqrt{\alpha_t} s_t^2 &= & \ts  \sum_{t=2}^{T+1} \sqrt{\alpha_{t-1}} s_{t-1}^2 ~~ \xxxx{1}{=} ~~ \sum_{t=2}^{T+1} \alpha_{t-1} P_{t-1} \nn\\
&\xxxx{2}{\leq}& \ts 2 \sum_{t=2}^{T+1} \alpha_{t} P_{t-1} ~~\leq ~~ 6 P_0 + 6 \sum_{t=1}^{T+1}\tfrac{B_t}{\sqrt{\alpha_t}},
\eeq
\noi where step \step{1} uses the definition of $P_t$; step \step{2} uses $\frac{\alpha_{t-1}}{\alpha_t}\leq 1+q\leq 2$; step \step{3} uses Inequality (\ref{eq:bound:alpha:P:sum}). Taking expectations, we obtain
\beq \label{eq:pathwise:alpha:sqrt:st2:exp}
&& \ts \EEE\left[\sum_{t=1}^T \sqrt{\alpha_t} s_t^2 \right] ~~\xxxx{1}{\leq} ~~ 6 \hG^2 + 6 C_b \ln(e+T+1) ~~\xxxx{2}{\leq} ~~ \underbrace{\ts 12 (\hG^2 + C_b)}_{:=C_c}\cdot \ln (e+T),
\eeq
\noi where step \step{1} uses $\EEE[\sum_{t=1}^{T+1}\tfrac{B_t}{\sqrt{\alpha_t}}]\leq C_b\ln(e+T+1)$; step \step{2} uses $P_0=s_0^2=\|\nabla f(\X_0)\|_{\fro}^2\leq \hG^2$, $\ln(e+T)\ge1$, and $\ln(e+T+1)\le2\ln(e+T)$ for all $T\ge1$.

\paragraph{Part (c): Bounding the Term $\sum_{t=1}^T \gamma_t^2 m_t^2$.} We derive:
\beq
\ts \sum_{t=1}^T \gamma_t^2 m_t^2 & \xxxx{1}{\leq} & \ts \sum_{t=1}^T \tfrac{m_t^2}{ 1+\sum_{j=1}^t m_j^2 }  ~~\xxxx{}{\leq} ~~ \ln (1+ \MMM_T) ~~\xxxx{2}{\leq} ~~ \ts \ln (1+ \kappa \ln(e+T) (1+\GGG_T) ) \nn\\
&\xxxx{3}{\leq} & \ln (1+ \kappa \ln(e+T) (1+T \hG^2 ) ) ~~\leq~~  \ln (1+ \kappa (1+\hG^2) \ln(e+T) T ) \nn\\
&\xxxx{4}{\leq} & \ln (e+ \kappa (1+\hG^2) ) \ln (e + \ln(e+T) T  ) ~~\xxxx{5}{\leq} ~~ \underbrace{\ts 2\ln (e+ \kappa (1+\hG^2) ) }_{:=C_d}\ln(e+T) ,\nn
\eeq
\noi where step \step{1} uses $\gamma_t\leq\big(1+\sum_{j=1}^t\alpha_j^{-1/2}\|\M_j\|_{\fro}^2\big)^{-1/2} \leq\big(1+\sum_{j=1}^t m_j^2\big)^{-1/2}$; step \step{2} uses Lemma~\ref{lemma:OptMuonI:sumV:byG}; step \step{3} uses $\GGG_T\leq T\hG^2$; step \step{4} uses $\ln(1+ab)\leq \ln(e+a)\ln(e+b)$; step \step{5} uses $\ln (e + \ln(e+T) T  )\leq 2 \ln(e+T)$.

\end{proof}

\subsection{Proof of Lemma \ref{lemma:U:square:MuonI}}
\label{app:lemma:U:square:MuonI}

\begin{proof} Define $s_t:=\|\S_t\|_{\fro}$, and $g_t:=\|\G_t\|_{\fro}$.

\noi Define $\UUU_t:=\sum_{i=1}^t \gamma_i s_i^2$ and $\tilde{\UUU}_T :=\sum_{t=1}^{T}\sqrt{\alpha_t}s_t^2$, where $\UUU_0=\tilde{\UUU}_0=0$.

\noi Define $B_t:=2\alpha_t^2\|\epsilons_t\|_{\fro}^2+8 \delta \gamma_{t-1}^2m_{t-1}^2+2(1-\alpha_t)\left\langle \S_{t-1},\zetas_t\right\rangle$, where $\zetas_t:=\epsilons_t-(1-\alpha_t)\epsilons'_t$.

\noi Define $I_t:=2\alpha_t^{3/2}\|\epsilons_t\|_{\fro}^2$, $J_t:=8 \delta \frac{\gamma_{t-1}^2m_{t-1}^2}{\sqrt{\alpha_t}}$, $K_t:=\frac{2(1-\alpha_t)}{\sqrt{\alpha_t}}\left\langle\S_{t-1},\zetas_t\right\rangle$, and $\delta:=n L^2\theta^2$.

\noi Let $\III_T:=\sum_{t=1}^{T}I_t$, $\JJJ_T:=\sum_{t=1}^{T}J_t$, and $\KKK_T:=\sum_{t=1}^{T}K_t$, where $\III_0=\JJJ_0=\KKK_0=0$.

\noi Let $\frac{B_t}{\sqrt{\alpha_t}}=I_t+J_t+K_t$, where $B_t$ can be negative. Define $P_t:=\frac{s_t^2}{\sqrt{\alpha_t}}$.

\noi Define $R_t:=\frac{\gamma_{t-1}^2m_{t-1}^2}{\sqrt{\alpha_{t-1}}}$, and $P_{t-1}:=\tfrac{s_{t-1}^2}{\sqrt{\alpha_{t-1}}}$. Define $\RRR_T:=\sum_{t=1}^TR_t$.

Since \(\alpha_t\), \(\S_{t-1}\), and \(\X_t,\X_{t-1}\) are
\(\FF_{t-1}\)-measurable, and $\EEE_t[\epsilons_t]=0$, $\EEE_t[\epsilons'_t]=0$, we have $\EEE_t[K_t]=0$. Thus \(\{K_t\}\) is a martingale-difference sequence.

\paragraph{Bounding the Term $\tilde{\UUU}_{T-1}:=\sum_{t=1}^{T-1} \sqrt{\alpha_t}s_t^2$.} We have from Inequality (\ref{eq:pathwise:alpha:sqrt:st2}) that:
\beq
\ts \sum_{t=1}^{T-1} \sqrt{\alpha_t} s_t^2 ~~\leq ~~ \ts 6 P_0 + 6 \sum_{t=1}^{T}\tfrac{B_t}{\sqrt{\alpha_t}} ~~\xxxx{1}{=}~~ \ts 6 P_0 + 6\III_T + 6 \JJJ_T + 6 \KKK_T,\nn
\eeq
\noi where step \step{1} uses the definition of $B_t$, $\III_T$, $\JJJ_T$, and $\KKK_T$. Using $(a+b+c+d)^2\le 4(a^2+b^2+c^2+d^2)$, we have:
\beq
\tilde{\UUU}_{T-1}^2 \le 144 \left(P_0^2+\III_T^2 + \JJJ_T^2 + \KKK_T^2 \right).
\eeq

\paragraph{Bounding the Term $\EEE[\III_T^2]$.} Define $\III_T:=2\sum_{t=1}^{T}\alpha_t^{3/2}\|\epsilons_t\|_{\fro}^2$, and $Y_T:=\sum_{t=1}^{T}\frac{\|\epsilons_t\|_{\fro}^2}{1+\sum_{i=1}^t g_i^2}$. Applying Lemma \ref{lemma:weighted:self:normalized} with $W_{t-1}\equiv1$, we have:
\beq \label{eq:YT:III}
\EEE[Y_T]\le 8 (1+\hG^2) \ln(e+\hG^2) \ln(e+T).
\eeq
\noi Define $y_t:=\frac{\|\epsilons_t\|_{\fro}^2}{1+\sum_{i=1}^t g_i^2}$. We notice the following identity holds: $\ts Y_T^2= \sum_{t=1}^{T}y_t^2+2\sum_{t=1}^{T}Y_{t-1}y_t$. We derive:
\beq
\EEE[Y_T^2] &= & \ts \EEE[\sum_{t=1}^{T}y_t^2]+ 2\EEE[\sum_{t=1}^{T}Y_{t-1}y_t ] ~~\xxxx{1}{\leq} ~~ \ts \ts 4\hG^2 \EEE[\sum_{t=1}^{T}y_t] + 2\EEE[\sum_{t=1}^{T}Y_{t-1}y_t ] \nn\\
&\xxxx{2}{\leq}& \ts \ts 32 (1+\hG^2)^2 \ln(e+\hG^2) \ln(e+T) + 16 (1+\hG^2) \ln(e+\hG^2)\ln(e+T) \EEE[Y_{T-1}] \nn\\
&\xxxx{3}{\leq}& \ts \ts 32 (1+\hG^2)^2 \ln(e+\hG^2) \ln(e+T) + 16 (1+\hG^2)^2 \ln^2(e+\hG^2)\ln^2(e+T) \cdot 8  \nn\\
&\leq & \ts \ts 15^2 (1+\hG^2)^2 \ln^2(e+\hG^2)\ln^2(e+T) ,\nn
\eeq
\noi where step \step{1} uses $y_t \leq \|\epsilons_t\|_{\fro}^2\leq 4 \hG^2$; step \step{2} uses Lemma \ref{lemma:weighted:self:normalized} again with
\(W_{t-1}=Y_{t-1}\), and Inequality (\ref{eq:YT:III}); step \step{3} uses Inequality (\ref{eq:YT:III}) again.

We derive the following inequality:
\beq
\ts \III_T &=&\ts 2\sum_{t=1}^{T}\alpha_t^{3/2}\|\epsilons_t\|_{\fro}^2 ~~\xxxx{1}{\leq} ~~ \ts 6 \sum_{t=1}^{T}\alpha_{t+1}^{3/2}\|\epsilons_t\|_{\fro}^2 \nn\\
&\xxxx{2}{\leq} & \ts 6 (1+\hG^2) \sum_{t=1}^{T} \tfrac{\|\epsilons_t\|_{\fro}^2}{ 1 + \GGG_t} ~~=~~ 6 (1+\hG^2) Y_{T}, \nn
\eeq
\noi where step \step{1} uses $\tfrac{\alpha_t}{\alpha_{t+1}} \leq  1+q = \tfrac{5}{3}$; step \step{2} uses $\alpha_{t+1}^{3/2}\le \frac{1+\hG^2}{1+\sum_{i=1}^t g_i^2}$. Taking the expectation yields:
\beq \label{eq:II}
\EEE[\III_T^2] = 36 (1+\hG^2)^2\EEE[ Y_T^2] \leq \underbrace{\ts 36 (1+\hG^2)^2 \cdot 15^2 (1+\hG^2)^2 \ln^2(e+\hG^2)}_{:=C_I^2} \cdot \ln^2(e+T). \nn
\eeq

\paragraph{Bounding the Term $\EEE[\JJJ_T^2]$.} Define $\JJJ:=8 \delta \sum_{t=1}^{T}\frac{\gamma_{t-1}^2m_{t-1}^2}{\sqrt{\alpha_t}}$. We have from Inequality (\ref{eq:gamma:2:m:2:sqrt:alpha}):
\beq
\ts \sum_{t=1}^T\gamma_{t-1}^2m_{t-1}^2/\sqrt{\alpha_t} ~~ {\leq} ~~ 8 \ln(e+\kappa (1+\hG^2)) \ln(e+T)\quad a.s.
\eeq
Taking the expectation yields:
\beq \label{eq:JJ}
\EEE[\JJJ_T^2] = \underbrace{\ts (64 \delta)^2 \ln^2(e+\kappa (1+\hG^2))}_{:=C_J^2}\cdot \ln^2(e+T).
\eeq

\paragraph{Bounding the Term $\|\zetas_t\|_{\fro}^2$.} By individual smoothness and the update rule,
\beq
\|\epsilons_t-\epsilons'_t\|_{\fro}
&\le& \|\nabla f(\X_t;\xi_t)-\nabla f(\X_{t-1};\xi_t)\|_{\fro} + \|\nabla f(\X_t)-\nabla f(\X_{t-1})\|_{\fro} \nn\\
&\le& 2L\|\X_t-\X_{t-1}\|_{\fro} ~~\le~~ 2L\theta\gamma_{t-1}m_{t-1}\|\Orth(\M_{t-1})\|_{\fro} ~~\le~~ 2 \sqrt{\delta} \gamma_{t-1}m_{t-1}.\nn
\eeq
\noi Furthermore, we derive:
\beq \label{eq:zeta:bound:MuonI:new}
\|\zetas_t\|_{\fro}^2  &\xxxx{1}{=} & \ts \| \alpha_t\epsilons'_t+(\epsilons_t-\epsilons'_t) \|_{\fro}^2 ~~{\leq} ~~ \ts 2\alpha_t^2\| \epsilons'_t \|^2 + 2 \|\epsilons_t-\epsilons'_t\|_{\fro}^2 \nn\\
&\xxxx{2}{\leq} & 2 \alpha_t^2 (2\hG)^2 + 2 \|\epsilons_t-\epsilons'_t\|_{\fro}^2  ~~\leq ~~ 8\hG^2\alpha_t^2 + 8 \delta \gamma_{t-1}^2m_{t-1}^2.
\eeq
\noi where step \step{1} uses $\zetas_t=\epsilons_t-(1-\alpha_t)\epsilons'_t=\alpha_t\epsilons'_t+(\epsilons_t-\epsilons'_t)$; step \step{2} uses $\|\epsilons'_t\|_{\fro}\le 2\hG$, which is implied by Assumption \ref{ass:BG}.

\paragraph{Bounding the Term $\sum_{t=1}^T \EEE[K_t^2]$.} We derive:
\beq
\ts K_t^2 &\xxxx{1}{\le}& \ts \frac{4}{\alpha_t}s_{t-1}^2\|\zetas_t\|_{\fro}^2  ~~\xxxx{2}{\le}~~ 32\hG^2\alpha_t s_{t-1}^2 + 32 \delta \frac{\gamma_{t-1}^2m_{t-1}^2}{\alpha_t} s_{t-1}^2  \nn \\
& = &  \ts 32\hG^2\alpha_t s_{t-1}^2 + 32 \delta  \frac{\gamma_{t-1}^2m_{t-1}^2}{\sqrt{\alpha_{t-1}}} {\tfrac{\alpha_{t-1}}{\alpha_{t}}} \cdot \tfrac{s_{t-1}^2}{\sqrt{\alpha_{t-1}}}  \nn\\
& \xxxx{3}{\leq} &  \ts 32\hG^2 \varpi \alpha_{t-1} s_{t-1}^2 + 64 \delta R_t P_{t-1}, \nn
\eeq
\noi where step \step{1} uses $1-\alpha_t\le1$; step \step{2} uses Inequality (\ref{eq:zeta:bound:MuonI:new}); step \step{3} uses $\alpha_{t}\leq 1$, and $\tfrac{\alpha_{t-1}}{\alpha_t}\leq 2$. Taking the expectation and summing this inequality over $t$ from $1$ to $T$ yields:
\beq \label{eq:EKKK}
\ts \sum_{t=1}^T \EEE[K_t^2] &\leq & \ts \sum_{t=1}^T \EEE[32\hG^2 \varpi \alpha_{t-1} s_{t-1}^2 + 64 \delta R_t P_{t-1} ] \nn\\
&\leq & \ts 32\hG^2 \varpi s_{0}^2 + 32\hG^2 \varpi  \EEE[ \sum_{t=1}^T \alpha_{t} s_{t}^2] + 64 \delta \EEE\big[\sum_{t=1}^T   R_t P_{t-1} \big] \nn\\
&\leq & \ts 64\hG^2 \varpi C_c \ln(e+T) + 64 \delta  \EEE\big[ \sum_{t=1}^T   R_t P_{t-1} \big] . \nn
\eeq

\paragraph{Bounding the Term $\RRR_T$.} We have the following pathwise bound:
\beq
\ts \RRR_T
=\sum_{t=1}^T\frac{\gamma_{t-1}^2m_{t-1}^2}{\sqrt{\alpha_{t-1}}}
\xxxx{1}{\leq}
\sqrt{\varpi}\sum_{t=1}^T\frac{\gamma_{t-1}^2m_{t-1}^2}{\sqrt{\alpha_t}}
\xxxx{2}{\leq}
\underbrace{\ts 8\sqrt{\varpi}\ln(e+\kappa (1+\hG^2))}_{:=C_R}\cdot \ln(e+T)
\quad a.s.,
\eeq
\noi where step \step{1} uses $\alpha_t/\alpha_{t-1}\le\varpi$, and step \step{2} uses Inequality (\ref{eq:gamma:2:m:2:sqrt:alpha}).

\paragraph{Bounding the Term $\EEE\big[\sum_{t=1}^T R_tP_{t-1}\big]$.} Inequality (\ref{eq:to:be:cited}) implies that $P_t\le P_{t-1}+\frac{B_t}{\sqrt{\alpha_t}}$, where $\frac{B_t}{\sqrt{\alpha_t}}=I_t+J_t+K_t$. This leads to the following results:
\beq
\ts P_{t-1} \le P_0+\sum_{i=1}^{t-1}(I_i+J_i)+\sum_{i=1}^{t-1}K_i \le P_0+\III_T+\JJJ_T+\KKK_{t-1}.\nn
\eeq
\noi Multiplying by \(R_t\ge0\), taking expectations, and summing the resulting inequality over $t$ from $1$ to $T$ yields:
\beq
\label{eq:R:P:decomp:MuonI:new}
\ts \EEE[\sum_{t=1}^TR_tP_{t-1} ] &\le& \ts \EEE[\big(\sum_{t=1}^T R_t\big) (P_0+\III_T+\JJJ_T) + \big( \sum_{t=1}^TR_t\KKK_{t-1}\big)] \nn \\
&\xxxx{1}{\leq} & \ts   \EEE[  C_R \ln(e+T) \big(P_0 +\III_T+\JJJ_T \big) + \big( \sum_{t=1}^TR_t\KKK_{t-1}\big)  ) ]  \nn \\
&= & \ts   C_R \ln(e+T) \big(P_0 + \EEE[\III_T + \JJJ_T ]\big) + \EEE[ \sum_{t=1}^TR_t\KKK_{t-1} ] \nn \\
& \xxxx{2}{\leq} & \ts C_R \big(P_0 + {C_I}+{C_J} \big)\ln^2(e+T) + \EEE[ \sum_{t=1}^TR_t\KKK_{t-1} ],
\eeq
\noi where step \step{1} uses the pathwise bound $\sum_{t=1}^T R_t \leq C_R \ln(e+T)$; step \step{2} uses $\EEE[\III_T] \leq \sqrt{ \EEE[ (\III_T)^2] }$, and $\EEE[\JJJ_T] \leq \sqrt{ \EEE[ (\JJJ_T)^2] }$.

\paragraph{Bounding the Term $\EEE[ \sum_{t=1}^TR_t\KKK_{t-1} ]$.} Let $\RRR_t:=\sum_{i=1}^tR_i$. Since $\KKK_t=\KKK_{t-1}+K_t$ and $\RRR_t=\RRR_{t-1}+R_t$, we have $\RRR_T\KKK_T=\sum_{t=1}^T\RRR_tK_t+\sum_{t=1}^TR_t\KKK_{t-1}$, leading to:
\beq
\ts \sum_{t=1}^TR_t\KKK_{t-1} ~~=~~ {\RRR_T\KKK_T-\sum_{t=1}^T\RRR_tK_t}.\nn
\eeq
\noi Taking expectations, and using that \(\RRR_t\) is \(\FF_{t-1}\)-measurable and \(\EEE_t[K_t]=0\), we get
\beq \label{eq:AT:MT:bound:MuonI:new}
&&\ts \EEE\left[\sum_{t=1}^TR_t\KKK_{t-1}\right] = \EEE[ \RRR_T\KKK_T]~~\le~~
\EEE[ \RRR_T\cdot |\KKK_T|]\nn\\
&\le& C_R \ln(e+T)\EEE[|\KKK_T|] ~~\le~~ C_R\ln(e+T)\sqrt{\EEE[\KKK_T^2]} .
\eeq

\paragraph{Final Bound.} Combining \eqref{eq:R:P:decomp:MuonI:new} and
\eqref{eq:AT:MT:bound:MuonI:new}, we obtain
\beq \label{eq:R:P:expect:MuonI:new}
\ts \EEE\left[\sum_{t=1}^TR_tP_{t-1}\right] \le C_R (P_0 + {C_I}+ {C_J}) \ln^2(e+T) + C_R \ln(e+T) \sqrt{\EEE[\KKK_T^2]}.
\eeq

Define $C_w:= 64 \hG^2\varpi C_c + 64 \delta C_R\big(\hG^2+{C_I}+{C_J}\big)$. Since $\EEE[\KKK_T^2] =\sum_{t=1}^T \EEE[K_t^2]$, we derive from Inequality (\ref{eq:EKKK}):
\beq
\ts \EEE[\KKK_T^2] &\leq& \ts 64\hG^2 \varpi C_c \ln(e+T) + 64 \delta  \EEE\big[ \sum_{t=1}^T   R_t P_{t-1} \big] \nn\\
&\xxxx{1}{\leq} & \ts C_w \ln^2(e+T) + 64 \delta C_R \ln(e+T) \sqrt{\EEE[\KKK_T^2]} \nn\\
&\xxxx{}{\leq} & \ts C_w \ln^2(e+T) + C_w \ln(e+T) \sqrt{\EEE[\KKK_T^2]} \nn\\
&\xxxx{}{\leq} & \ts \max\left(2 C_w \ln^2(e+T), 2 C_w \ln(e+T) \sqrt{\EEE[\KKK_T^2]} \right) \nn\\
&\xxxx{}{\leq} & \ts \underbrace{\ts 4 (C_w^2+C_w) }_{:=C_K}\ln^2(e+T), \nn
\eeq
\noi where step \step{1} uses Inequality (\ref{eq:R:P:expect:MuonI:new}).

Since the preceding argument is valid for any integer horizon, applying it with
\(T+1\) in place of \(T\) and using $\ln^2(e+T+1)\le 2\ln^2(e+T)$ for $T\geq 1$,
we obtain
\beq
&&\EEE[\III_{T+1}^2]\le 2C_I^2\ln^2(e+T),~~\EEE[\JJJ_{T+1}^2]\le 2C_J^2\ln^2(e+T),~~\EEE[\KKK_{T+1}^2]\le 2C_K\ln^2(e+T).\nn
\eeq

Finally, we have
\beq
\EEE[\UUU_T^2] &\le& \EEE[\tilde{\UUU}_T^2] ~~\le~~ 144\left(P_0^2+\EEE[\III_{T+1}^2] +\EEE[\JJJ_{T+1}^2] +\EEE[\KKK_{T+1}^2]\right) \nn\\
&\le& \underbrace{288\left(\hG^4+C_I^2+C_J^2+C_K\right)}_{:=C_u}
\cdot \ln^2(e+T).\nn
\eeq

\end{proof}

\subsection{Proof of Lemma \ref{lemma:weighted:bridge:MuonI}}
\label{app:lemma:weighted:bridge:MuonI}

\begin{proof}
Let $F_t:=f(\X_t)-f_*$, $\UUU_T:=\sum_{t=1}^T\gamma_t s_t^2$, $\WWW_T:=\sum_{t=1}^T\gamma_t m_t^2$, $\MMM_T:=\sum_{t=1}^T m_t^2$.

\noi Let $m_t=\|\M_t\|_{\fro}$ and $s_t=\|\S_t\|_{\fro}$.

\paragraph{Bounding the Term $\EEE[\WWW_T]$.} By Lemma \ref{lemma:suff:descent:I},
\beq
\ts f(\X_{t+1})-f(\X_t)+c_0\gamma_t m_t^2 \le c_1\gamma_t s_t^2+c_2\gamma_t^2m_t^2.
\eeq
Rearranging and summing over \(t=1,\ldots,T\), we get
\beq \label{eq:MuonI:pathwise:W}
\ts \WWW_T \le  \frac{1}{c_0}
\left(F_1 + c_1\UUU_T + c_2\sum_{t=1}^T\gamma_t^2m_t^2 \right).
\eeq
Taking expectations and using Lemma \ref{lemma:bcd:MuonI}(b) and Lemma \ref{lemma:bcd:MuonI}(c) gives
\beq
\EEE[\WWW_T] &\le& \tfrac{1}{c_0}\left( F_1+c_1C_c\ln(e+T)+c_2C_d\ln(e+T)\right)\nn\\
&\xxxx{1}{\leq} & \underbrace{ \ts \frac{1}{c_0}\big(F_1+c_1C_c+c_2C_d\big)
}_{:=C_w} \cdot \ln(e+T).
\eeq
\noi where step \step{1} uses $\ln(e+T)\ge 1$.

\paragraph{Bounding the Term $\EEE[\WWW_T^2]$.} Squaring \eqref{eq:MuonI:pathwise:W} and using
$(a+b+c)^2\le 3(a^2+b^2+c^2)$, we obtain
\beq
\ts  \WWW_T^2 \le \tfrac{3}{c_0^2} \left( F_1^2 + c_1^2\UUU_T^2 + c_2^2 \left(\sum_{t=1}^T\gamma_t^2m_t^2\right)^2\right).
\eeq
Taking expectations, using Lemma \ref{lemma:U:square:MuonI} and
Lemma \ref{lemma:bcd:MuonI}(c), yields
\beq \EEE[\WWW_T^2] &\le& \tfrac{3}{c_0^2}
\left( F_1^2 + c_1^2 C_u\ln^2(e+T) + c_2^2 C_d^2\ln^2(e+T)
\right) \nn\\                                                   &\le& \underbrace{\tfrac{3}{c_0^2} \left( F_1^2+c_1^2C_u+c_2^2C_d^2 \right)}_{:=C_x}
\ln^2(e+T).
\eeq

\paragraph{Bounding the Term $\alpha_T^{1/2}\MMM_T$.} We derive:
\beq
&& \ts   \ts {\alpha_T}^{1/2}\MMM_T \nn\\
&\leq & \ts  \sqrt{\alpha_T} \cdot \sum_{t=1}^T m_t^2 ~~ \xxxx{1}{\leq}~~ \ts   \sqrt{\varpi}\tfrac{\sqrt{\alpha_T}}{\gamma_T} \sum_{t=1}^T \gamma_t m_t^2 ~~=~~ \sqrt{\varpi} \WWW_T \cdot \tfrac{\sqrt{\alpha_T}}{\gamma_T}  \nn\\
&\xxxx{2}{\leq}& \ts \sqrt{\varpi} \WWW_T \max\left\{1, \sqrt{\alpha_T}\sqrt{1+\sum_{j=1}^T\alpha_j^{-1/2}m_j^2}\right\} ~~\xxxx{3}{\leq}~~ \ts {\varpi} \WWW_T      \max\left\{1, \sqrt{\alpha_T}\sqrt{1+ \alpha_T^{-1/2} \MMM_T }\right\} \nn\\
&\xxxx{}{=}& \ts {\varpi} \WWW_T      \max\left\{1,  \sqrt{\alpha_T + \alpha_T^{1/2} \MMM_T }\right\} ~~\xxxx{}{\leq}~~ \ts 2 {\varpi} \WWW_T \max\big( 2 , \sqrt{\alpha_T^{1/2} \MMM_T } \big) \nn\\
&{\leq}& \ts \max \big( 4 \varpi \WWW_T, 4\varpi^2\WWW_T^2 \big) \leq  4 \varpi^2 (\WWW_T +\WWW_T^2 ),\nn
\eeq
\noi where step \step{1} uses the fact that $\gamma_T\leq \sqrt{\varpi} \gamma_t$; step \step{2} uses $\tfrac{1}{\gamma_t} = \max(\alpha_t^{-1/2},(1+\sum_{j=1}^t \alpha_j^{-1/2}m_j^2)^{1/2})$; step \step{3} uses $\alpha_t^{-1/2}\leq \sqrt{\varpi}\,\alpha_T^{-1/2}$ for $t\le T$, with the additional factor absorbed into the leading constant. Taking expectations, we conclude that
\beq
  \ts \EEE[\alpha_T^{1/2}\MMM_T] &\le& \ts 4\varpi^2 \big( \EEE[\WWW_T] + \EEE[\WWW_T^2] \big) ~~\leq ~~4 \varpi^2 \left( C_w\ln(e+T) + C_x\ln^2(e+T)\right)  \nn \\
&\le& \underbrace{ 4 \varpi^2  (C_w+C_x)}_{:=C_v}\ln^2(e+T).\nn
\eeq
\end{proof}

\subsection{Proof of Theorem \ref{the:it:MuonI}}
\label{app:the:it:MuonI}

\begin{proof} Define $m_t=\|\M_t\|_{\fro}$, $s_t=\|\S_t\|_{\fro}$, and $g_t=\|\G_t\|_{\fro}$.

\noi Define $\MMM_T:=\sum_{t=1}^T m_t^2$, $\SSS_T:=\sum_{t=1}^T s_t^2$, and $\GGG_T:=\sum_{t=1}^T g_t^2$.

\noi Let $\epsilons_t:=\G_t-\nabla f(\X_t)$, and $\ZZ_T:=\EEE\left[(1+\GGG_T)^{1/3}\right]$.

\paragraph{Bounding the Term $\EEE[\frac1{\sqrt{\alpha_T}}]$.} Since \(q=2/3\), the definition of \(\alpha_T\) gives
\beq
\ts \frac1{\sqrt{\alpha_T}} ~~=~~
\left(
\frac{1+\sum_{i=1}^{T-1} g_i^2}{1+\max_{1\le i\le {T-1}}g_i^2}
\right)^{1/3}~~\le~~ \left(1+\sum_{i=1}^Tg_i^2\right)^{1/3}
~~=~~ (1+\GGG_T)^{1/3}.\nn
\eeq
\noi This leads to $\EEE\big[\frac1{\sqrt{\alpha_T}}\big]\leq\ZZ_T$.

\paragraph{Bounding the Momentum Sum.} By Lemma \ref{lemma:weighted:bridge:MuonI} and Cauchy--Schwarz,
\beq \label{eq:MuonI:sqrt:M}
\ts \EEE[\sqrt{\MMM_T}] &=& \ts  \EEE\left[\sqrt{\sqrt{\alpha_T}\MMM_T\cdot\frac1{\sqrt{\alpha_T}}}\right] ~~\le~~ \ts  \sqrt{\EEE[\sqrt{\alpha_T}\MMM_T] \cdot \EEE\left[\frac1{\sqrt{\alpha_T}}\right]} \nn \\
&\le & \ts \sqrt{C_v\ln^2(e+T)\ZZ_T}=\sqrt{C_v}\ln(e+T)\sqrt{\ZZ_T}.
\eeq

\paragraph{Bounding the Tracking-Error Sum.} Because $\alpha_T\leq \varpi \alpha_t$ for $t\le T$, $\SSS_T=\sum_{t=1}^T s_t^2 \le \frac{\sqrt{\varpi}}{\sqrt{\alpha_T}} \sum_{t=1}^T\sqrt{\alpha_t}s_t^2$. Using Cauchy--Schwarz, Lemma \ref{lemma:bcd:MuonI}(b), and $\EEE\left[\frac1{\sqrt{\alpha_T}}\right]\leq\ZZ_T$, we obtain
\beq \label{eq:MuonI:sqrt:S}
\ts \EEE[\sqrt{\SSS_T}] ~~\le~~ \sqrt{ \sqrt{\varpi} \EEE\left[\sum_{t=1}^T\sqrt{\alpha_t}s_t^2\right] \cdot \EEE\left[\frac1{\sqrt{\alpha_T}}\right]}    ~~\le~~ \sqrt{ \sqrt{ \varpi} C_c\ln(e+T)\ZZ_T}.
\eeq

\paragraph{Bounding the Gradient-Energy Term.} Since $\nabla f(\X_t)=\M_t-\S_t$, we have $\|\nabla f(\X_t)\|_{\fro}^2\le 2m_t^2+2s_t^2$. Therefore, by \eqref{eq:MuonI:sqrt:M} and \eqref{eq:MuonI:sqrt:S},
\beq \label{eq:MuonI:A:Z}
\ts \AAA_T :=\EEE\left[\sqrt{\sum_{t=1}^T\|\nabla f(\X_t)\|_{\fro}^2}
\right] ~~\le~~ \sqrt{2}\EEE[\sqrt{\MMM_T}] +  \sqrt{2}\EEE[\sqrt{\SSS_T}]     ~~\le ~~ {C_{z}\ln(e+T)\sqrt{\ZZ_T}},
\eeq
where $C_{z}:=\sqrt{2C_v}+ \sqrt{2 C_c} \varpi^{1/4} \geq 1$.

\paragraph{Self-Consistent Bound on \(\ZZ_T\).} We derive:
\beq
\ZZ_T & \xxxx{1}{:=}&  \ts \EEE[(1+\GGG_T)^{1/3}] \nn \\
&\ts\xxxx{2}{\le}& \ts 1 + 2^{1/3}
\EEE\big[ \big(\sum_{t=1}^T\|\nabla f(\X_t)\|_{\fro}^2\big)^{1/3}\big] + 2^{1/3} \EEE\big[\big(\sum_{t=1}^T\|\epsilons_t\|_{\fro}^2\big)^{1/3}\big] \nn \\
&\xxxx{3}{\le}& \ts 1 + 2\AAA_T^{2/3} + 2 (T\sigma^2)^{1/3} ~~\leq ~~ 1 + 2 C_z^{2/3} \ln^{2/3} (e+T) \ZZ_T^{1/3} + 2 (T\sigma^2)^{1/3}  \nn\\
&\le& \ts 2 \max\left( 1 + 2 (T\sigma^2)^{1/3}, 2 C_z^{2/3} \ln^{2/3} (e+T) \ZZ_T^{1/3}   \right) \nn\\
&\le& \ts \max\left( 2 + 4 (T\sigma^2)^{1/3}, (4 C_z^{2/3} \ln^{2/3} (e+T) )^{3/2}  \right) \nn\\
&\le& \ts 2 + 4 (T\sigma^2)^{1/3} +  4^{3/2} C_z \ln(e+T)  \nn\\
&\xxxx{4}{\le}& \ts 4 (T\sigma^2)^{1/3} +  10 C_z \ln(e+T),  \label{eq:final:final}
\eeq
\noi where step \step{1} uses the definition of $\ZZ_T$; step \step{2} uses $\GGG_T \le 2\sum_{t=1}^T\|\nabla f(\X_t)\|_{\fro}^2 + 2\sum_{t=1}^T\|\epsilons_t\|_{\fro}^2$ (which is implied by $\G_t=\nabla f(\X_t)+\epsilons_t$), and $(a+b+c)^{1/3}\le a^{1/3}+b^{1/3}+c^{1/3}$; step \step{3} uses Jensen's inequality, and Assumption \ref{ass:sigma}; step \step{4} uses $C_z\geq 1$.

\paragraph{Final Bound.} By Cauchy--Schwarz,
\beq
\ts \EEE\left[ \frac1T\sum_{t=1}^T\|\nabla f(\X_t)\|_{\fro}\right] \le \frac1{\sqrt{T}}
\EEE\left[\sqrt{\sum_{t=1}^T\|\nabla f(\X_t)\|_{\fro}^2}\right].\nn
\eeq
Using Inequalities (\ref{eq:MuonI:A:Z}) and (\ref{eq:final:final}), we get
\beq
\ts  \EEE\left[
\frac1T\sum_{t=1}^T\|\nabla f(\X_t)\|_{\fro}
\right] &\le& \frac{ C_{z}\ln(e+T)}{\sqrt{T}} \sqrt{ 10 C_{z} \ln(e+T)+ 4 \sigma^{2/3} T^{1/3} } \nn \\
&\le& 4 C_z^{2} \frac{\ln^{3/2}(e+T)}{T^{1/2}} + 2 C_z \frac{\sigma^{1/3}\ln(e+T)}{T^{1/3}}.\nn
\eeq
Finally, if \(R\sim{\rm Unif}\{1,\ldots,T\}\) is independent of the algorithmic
randomness and \(\X_{\rm out}:=\X_R\), then
\beq
\ts \EEE[\|\nabla f(\X_{\rm out})\|_{\fro}] = \EEE\left[
\frac1T\sum_{t=1}^T\|\nabla f(\X_t)\|_{\fro}\right],\nn
\eeq
which proves the output guarantee.
\end{proof}

\end{document}